\DeclareMathOperator{\lip}{Lip}
\newcommand{\dd}{\, \mathrm{d}}
\newcommand{\aplim}{\operatorname*{aplim}}
\newcommand{\aplimsup}{\operatorname*{aplimsup}}
\newcommand{\bbR}{\mathbb{R}}
\date{\today}
\begin{document}

\title[$C^k$ Lusin Approximation for Functions $\bbG$ to $\mathbb{R}$]{A $C^k$ Lusin Approximation Theorem for Real-Valued Functions on Carnot Groups}

	\author[Marco Capolli]{Marco Capolli}
\address[Marco Capolli]{Institute of Mathematics, Polish Academy of Sciences, Jana i Jadrzeja Sniadeckich 8, Warsaw, 00-656, Poland}
\email[Marco Capolli]{Mcapolli@impan.pl}

\author[Andrea Pinamonti]{Andrea Pinamonti}
\address[Andrea Pinamonti]{Department of Mathematics, University of Trento, Via Sommarive 14, 38123 Povo (Trento), Italy}
\email[Andrea Pinamonti]{Andrea.Pinamonti@unitn.it}

\author[Gareth Speight]{Gareth Speight}
\address[Gareth Speight]{Department of Mathematical Sciences, University of Cincinnati, 2815 Commons Way, Cincinnati, OH 45221, United States}
\email[Gareth Speight]{Gareth.Speight@uc.edu}

\newtheorem{theorem}{Theorem}
\newtheorem{claim}[theorem]{Claim}
\newtheorem{corollary}[theorem]{Corollary}
\newtheorem{lemma}[theorem]{Lemma}
\newtheorem{proposition}[theorem]{Proposition}

\newtheorem*{theorem*}{Theorem}
\newtheorem*{lemma*}{Lemma}

\theoremstyle{definition}
\newtheorem{definition}[theorem]{Definition}
\newtheorem{assumptions}[theorem]{Assumptions}

\theoremstyle{remark}
\newtheorem{remark}[theorem]{Remark}
\newtheorem{example}[theorem]{Example}
\newtheorem{case}{Case}

\numberwithin{theorem}{section}
\numberwithin{equation}{section}

\newtheorem*{note*}{\underline{Important Note}}
\newenvironment{system}
  {\left\lbrace\begin{array}{*{6}{c@{}>{{}}c<{{}}@{}}c}}
  {\end{array}\right.}
\newcommand{\B}[1]{\mathbb{#1}}
\newcommand{\bbG}{\mathbb{G}}
\newcommand{\bbN}{\mathbb{N}}
\newcommand{\C}[1]{\mathcal{#1}}
\newcommand{\SF}[1]{\mathsf{#1}}
\newcommand{\SCR}[1]{\mathscr{#1}}
\newcommand{\defi}{\coloneqq}

\pagestyle{plain}

\pagestyle{headings}

\maketitle

\begin{abstract}
We study the Lusin approximation problem for real-valued measurable functions on Carnot groups. We prove that $k$-approximate differentiability almost everywhere is equivalent to admitting a Lusin approximation by $C^{k}_{\bbG}$ maps. We also prove that existence of an approximate $(k-1)$-Taylor polynomial almost everywhere is equivalent to admitting Lusin approximation by maps in a suitable Lipschitz function space.
\end{abstract}


\section{Introduction}

Nonsmooth maps arise frequently in analysis but can be challenging to work with. Hence it is useful to approximate them when possible by smooth maps. There are many ways to do this. For instance, Lusin's theorem asserts that given a measurable map $f\colon \mathbb{R}^{n}\to \mathbb{R}$ and $\varepsilon>0$, there exists a continuous map $F\colon \mathbb{R}^{n}\to \mathbb{R}$ such that $\mathcal{L}^{n}\{x\in\B{R}^n:F(x)\neq f(x)\}<\varepsilon$. Several refinements of Lusin's theorem show that the more regular the measurable function $f$, the more smooth the approximating function $F$ can be chosen. The best known result was proved by Federer \cite{Fed}. He showed that if a function $f:\mathbb{R}^n\to \mathbb{R}$ is almost everywhere differentiable, then for every $\varepsilon>0$ there exists a function $F\colon \mathbb{R}^{n}\to \mathbb{R}$ of class $C^{1}$ and a closed set $C\subset \mathbb{R}^n$ such that
$f=F$ on $C$ and $\mathcal{L}^{n}(\mathbb{R}^n\setminus C)<\varepsilon$.
Related results approximate absolutely continuous maps on $\mathbb{R}$ by $C^1$ maps, or $k$-approximately differentiable maps on $\mathbb{R}^{n}$ by $C^{k}$ maps \cite{LT94, Menne}. There are also approximation results for Sobolev maps \cite{Haj, Liu, MZim} and convex functions \cite{Aza1}. See also \cite{Isa}, \cite[Theorem 3.1]{DIKS}, \cite{Boj}. The present paper studies Lusin approximation for real-valued maps on Carnot groups which are $k$ approximately differentiable almost everywhere. Our main results (Theorem \ref{mainthma} and Theorem \ref{mainthmb}) partially extend the results of \cite{LT94} in the Euclidean setting. 

A large part of geometric analysis and geometric measure theory in Euclidean spaces may be generalized to more general settings. One particularly rich setting is that of Carnot groups \cite{BLU07, Mon02, Pan89}. Carnot groups are Lie groups whose Lie algebra admits a stratification. This stratification implies that points can be connected by horizontal curves. These are absolutely continuous curves with tangents in a distinguished subbundle of the tangent bundle. Considering lengths of horizontal curves yields the Carnot-Carath\'{e}odory distance. In addition to the group translations and distance, every Carnot group has a natural family of dilations and a Haar measure. This rich structure makes the study of analysis and geometry in Carnot groups appealing. However, results in the Carnot setting must respect the horizontal structure so can be quite different to the Euclidean setting.

One way to prove a Lusin approximation result is to apply a Whitney extension theorem. A Whitney extension theorem gives hypotheses under which a map defined on some subset can be extended to a smooth map on the whole space. This was first studied by Whitney in \cite{Whi34, Whi51}, but remains an active area of research, even in Euclidean spaces \cite{Feff2,Feff3}. To prove a Lusin approximation result, one typically starts with a nonsmooth map on the whole space, next deduces that it satisfies the hypotheses of a suitable Whitney extension theorem on some large compact set, then finally one applies the Whitney extension theorem to obtain the desired approximating smooth map. This strategy was implemented in the Euclidean context in \cite{LT94} to prove a $C^k$ Lusin approximation for maps on Euclidean spaces.

Lusin approximation in Carnot groups was first studied for horizontal curves in the Heisenberg group in \cite{S16}. The Heisenberg group is a Carnot group of step two and is the simplest non-Euclidean Carnot group. In \cite{S16} the third author showed directly that every horizontal curve coincides with a $C^{1}$ horizontal curve except for a set of small measure. However, the same result does not hold in the Engel group (a step three Carnot group). Independently, Zimmerman \cite{Zim18} proved a Whitney extension result for $C^{1}$ horizontal curves in the Heisenberg group. These results were extended by several authors to step two Carnot groups \cite{LS16}, pliable Carnot groups \cite{JS16} and sub-Riemannian manifolds \cite{SS18}.

Next, higher regularity results for horizontal curves were studied in the Heisenberg group. In \cite{PSZ19}, the second two authors and Zimmerman proved a Whitney extension result for $C^{k}$ horizontal curves in the Heisenberg group. In \cite{CPS21}, the authors of the present paper combine the results in \cite{PSZ19} with techniques from the Euclidean setting  \cite{LT94} to prove a $C^{k}$ Lusin approximation theorem for horizontal curves in the Heisenberg group. There the hypotheses on the nonsmooth maps required $(k-1)$-times $L^1$ differentiability of the first derivatives, rather than the weaker assumption of $k$-times approximate differentiability on the maps themselves which suffices in Euclidean spaces \cite{LT94}. More recently, Zimmerman investigated the Whitney finiteness principle for horizontal curves in the Heisenberg group \cite{Zim21}.

The results just mentioned focus on mappings from $\mathbb{R}$ into Carnot groups, i.e. curves. While mappings between general Carnot groups are at present out of reach, a $C^{k}$ Whitney extension theorem is known for $C^{k}$ mappings from a general Carnot group to $\mathbb{R}$ \cite{FSS01, FSC03, PV06}. In the present paper we prove a corresponding $C^k$ Lusin approximation result by combining techniques from \cite{LT94} and \cite{PV06}. Our main result is the following theorem. For the relevant definitions, see Section \ref{Background}.

\begin{theorem}\label{mainthma}
Let $D$ be a measurable subset of a Carnot group $\bbG$ and $f\colon D\to \B{R}$ be measurable. Then the following are equivalent for every non-negative integer $k$:
\begin{enumerate}
\item $f$ is $k$-approximately differentiable at almost every point of $D$. 
\item $f$ admits a Lusin approximation by functions in $C^{k}_{\bbG}(\bbG)$.
\end{enumerate} 
\end{theorem}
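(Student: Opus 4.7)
The direction (2) $\Rightarrow$ (1) is the routine one. If for each $n \geq 1$ there is a closed set $C_n \subset D$ with Haar measure of $D \setminus C_n$ less than $1/n$ and a function $F_n \in C^k_\bbG(\bbG)$ with $F_n = f$ on $C_n$, then at every density point $x$ of $C_n$ the intrinsic $k$-Taylor polynomial of $F_n$ at $x$ serves as an approximate $k$-Taylor polynomial for $f$ at $x$. Hence $f$ is $k$-approximately differentiable at almost every point of $\bigcup_n C_n$, and this union covers $D$ up to a null set.

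For (1) $\Rightarrow$ (2) I would follow the Euclidean template of \cite{LT94}, transplanted to the Carnot setting by means of the $C^k_\bbG$ Whitney extension theorem of \cite{PV06}. A preliminary reduction brings the problem to the case where $D$ is bounded and $f$ is bounded: decompose $\bbG$ into countably many compact pieces, solve the problem on each using a fraction of the $\varepsilon$-budget, and glue the local approximations with a $C^k_\bbG$ partition of unity. Fix $\varepsilon > 0$; the heart of the argument is to produce a closed set $A \subset D$ with $|D \setminus A| < \varepsilon$ on which all hypotheses of the Whitney extension theorem are satisfied. Using the hypothesis, I would first measurably select, for a.e.\ $x \in D$, an approximate $k$-Taylor polynomial $P_x$ of $f$ at $x$. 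Then, by combining Egorov's and Lusin's theorems applied to the rescaled remainder $R(x,y) = (f(y) - P_x(y))/d(x,y)^k$ and to the coefficient map $x \mapsto P_x$, I would extract a closed set $A$ on which (i) the coefficient assignment is continuous, (ii) $R(x,y) \to 0$ uniformly as $x, y \in A$ with $d(x,y) \to 0$, and (iii) the Whitney compatibility conditions of \cite{PV06} for the jet $\{P_x\}_{x \in A}$ hold uniformly on $A$. Invoking the extension theorem then yields $F \in C^k_\bbG(\bbG)$ with $F = f$ on $A$, as required.

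The main obstacle is step (iii): upgrading the pointwise approximate Taylor expansion of $f$ to uniform Whitney compatibility of the selected jets on a large closed set. In the Euclidean argument of \cite{LT94} this reduces to observing that if $P_x$ and $P_y$ are both good approximate $k$-Taylor polynomials of $f$ near $x$ and $y$ respectively, then their difference and its derivatives up to order $k$ must be small at the appropriate scale; a double Egorov/Lusin reduction then makes this smallness uniform. In the Carnot setting the corresponding comparison has to be carried out with respect to the Carnot--Carath\'eodory distance and using the noncommutative group translation, so that polynomials are compared at the correct homogeneous orders relative to the stratification. The intrinsic form of $C^k_\bbG$ Taylor polynomials and of the compatibility conditions established in \cite{PV06}, together with the measurable selection of the jet, is exactly what should allow this step to go through.
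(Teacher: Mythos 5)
Your direction $(2)\Rightarrow(1)$ is essentially the paper's Lemma~\ref{mainthma2}: at a density point of $\{z\in D: f(z)=u(z)\}$ the intrinsic Taylor polynomial $P_k(u,x,\cdot)$ of the $C^k_\bbG$ function $u$ is an approximate $k$-derivative of $f$, by Theorem~\ref{Teo_Taylor}. That part is fine. For $(1)\Rightarrow(2)$ you correctly identify the template (Egorov/Lusin reduction to a large closed set plus the Carnot Whitney extension theorem of \cite{PV06}) and you correctly flag jet compatibility as the crux, but you then declare that the compatibility ``should go through'' without supplying the mechanism. This is precisely where the paper has to do genuine new work, and naming the template does not close the gap.

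Two ideas are missing. First, the step you call (iii) is not handled by comparing $P_x$ and $P_y$ ``at the appropriate homogeneous orders'' in the abstract; the concrete tool is a De Giorgi-type lemma for polynomials in Carnot groups (Lemma~\ref{degiorgi_carnot}): if a polynomial $q$ of bounded homogeneous degree is small in $L^1$ on a subset of positive relative measure of a CC-ball $B(x_0,r)$, then $|X^\alpha q(x_0)|\lesssim r^{-Q-|\alpha|_\bbG}\int_E|q|$. Applying this to $q=p(y,\cdot)-p(x,\cdot)$ on the set $S(x,y,r,j)$ where both approximate expansions are good (and whose measure is bounded below via the ball-intersection estimate of Lemma~\ref{lemma_R}) gives exactly the Whitney remainder bound $|f_I(y)-X^Ip(x,y)|\lesssim\delta\,d(x,y)^{k-|I|_\bbG}$. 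Without this lemma your ``double Egorov/Lusin reduction'' produces smallness of a scalar remainder, not of all the derivative jets, and the argument stalls. Second, you invoke a ``measurable selection'' of the jet $x\mapsto P_x$ as if it were routine. In the Euclidean case one can lean on the fact that $\mathrm{Lip}(k,\bbR^n)$ functions admit $C^k$ Lusin approximation (Whitney), but no such approximation is known in Carnot groups; the paper therefore has to prove measurability of the coefficients $\alpha_J$ directly, and this occupies all of Section~\ref{measurability} (Proposition~\ref{diffmeas}), relying in turn on uniqueness of the approximate derivative (Proposition~\ref{structure_of_p}). Finally, your gluing of local approximations by a $C^k_\bbG$ partition of unity is not quite right as stated: on overlaps the weighted average $\sum\phi_i u_i$ need not equal $f$ even where each $u_i$ does, so one must either make the overlaps negligible or interpolate across them. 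The paper sidesteps this by an annular decomposition with thin transition regions of small measure, interpolating across the thin annuli via the Whitney extension theorem rather than a partition of unity.
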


The hypothesis of approximate differentiability in Theorem \ref{mainthma} mirrors that of the Euclidean setting \cite{LT94}. This contrasts with the study of horizontal curves in the Heisenberg group \cite{CPS21}, where a stronger hypothesis of $L^1$ differentiability on the derivatives was necessary. However, in the Euclidean context, the conditions in Theorem \ref{mainthma} are equivalent to the seemingly weaker condition of having an approximate $(k-1)$-Taylor polynomial almost everywhere. This equivalence uses the fact that in Euclidean spaces $\mathrm{Lip}(k, \bbR^{n})$ maps admit a Lusin approximation by $C^k$ maps, which follows from \cite[Theorem 4]{Whi51}. Whether a similar fact holds in Carnot groups is unknown and will be the object of future investigation. This difference means that some steps in proving Theorem \ref{mainthma} are considerably more complicated, particularly proving the measurability of coefficients of $k$-approximate derivatives (Proposition \ref{diffmeas}).

Our second theorem investigates separately maps with approximate $(k-1)$ Taylor polynomials almost everywhere. We show such maps admit approximation by functions in $\mathrm{Lip}(k, \bbG)$. 
The space $\mathrm{Lip}(k, \bbG)$ has a complicated definition, but is a subspace of functions $u\in C^{k-1}_{\mathbb{G}}(\Omega)$ for which $X^{J}u$ is bounded for $|J|_{\bbG}\leq k-1$ and $X^{J}u$ is Lipschitz for $|J|_{\bbG}= k-1$ (Lemma \ref{mainthmb3}).

\begin{theorem}\label{mainthmb}
Let $D$ be a measurable subset of a Carnot group $\bbG$ with $\mathcal{L}^{N}~(D)~<~\infty$. Let $f\colon D\to \B{R}$ be measurable. Then the following are equivalent for every positive integer $k$:
\begin{enumerate}
\item $f$ has an approximate $(k-1)$-Taylor polynomial at almost every point of~$D$.
\item $f$ admits a Lusin approximation on $D$ by functions in $\mathrm{Lip}(k, \bbG)$.
\end{enumerate} 
\end{theorem}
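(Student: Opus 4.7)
The plan is to follow the same overall architecture used for Theorem~\ref{mainthma}: a straightforward direction $(2)\Rightarrow(1)$ obtained by transferring Taylor expansions from the $\mathrm{Lip}(k,\bbG)$ approximant to its coincidence set, and a harder direction $(1)\Rightarrow(2)$ via Lusin, Egoroff, and a Whitney extension theorem. For $(2)\Rightarrow(1)$, I would fix $\varepsilon>0$, take a $\mathrm{Lip}(k,\bbG)$ function $F$ and a closed set $C\subset D$ with $\mathcal{L}^{N}(D\setminus C)<\varepsilon$ and $F=f$ on $C$. By Lemma~\ref{mainthmb3}, $F$ has a (true, not merely approximate) $(k-1)$-Taylor polynomial at every point with a pointwise remainder estimate; since $C$ has density $1$ at $\mathcal{L}^{N}$-a.e. $x\in C$, that Taylor polynomial of $F$ at such $x$ serves as an approximate $(k-1)$-Taylor polynomial of $f$ at $x$. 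A countable exhaustion in $\varepsilon$ gives the property at almost every point of $D$.

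For the substantive direction $(1)\Rightarrow(2)$, I would proceed in four steps parallel to the proof of Theorem~\ref{mainthma}. \textbf{Step 1 (Measurability).} Adapt Proposition~\ref{diffmeas} to show that the coefficients $c_{J}(x)$ of the approximate $(k-1)$-Taylor polynomial $P_{x}$ depend measurably on $x$. The uniqueness of $P_{x}$ (on the almost-everywhere set where it exists) together with a Borel selection argument applied to quantified versions of the density-$0$ condition should suffice; the hypothesis here is weaker than in Theorem~\ref{mainthma}, so one cannot invoke the measurability statement of that theorem directly. \textbf{Step 2 (Lusin).} For each $\eta>0$ apply Lusin's theorem to obtain a compact $K_{0}\subset D$ with $\mathcal{L}^{N}(D\setminus K_{0})<\eta$ on which $f$ and all $c_{J}$ are continuous.

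\textbf{Step 3 (Egoroff-type upgrade).} For each $\delta>0$ and $x$ in the full-measure set of Taylor points, the bad set $A_{x,\delta}=\{y:|f(y)-P_{x}(y)|>\delta\, d(x,y)^{k-1}\}$ has density $0$ at $x$. An Egoroff-style argument, applied simultaneously to a countable sequence $\delta_{n}\to 0$, should yield a compact $K\subset K_{0}$ with $\mathcal{L}^{N}(K_{0}\setminus K)<\eta$ on which the rate of this density-$0$ convergence is uniform in $x\in K$. Combined with the continuity of the $c_{J}$ on $K$, this produces the quantitative Whitney compatibility conditions matching the definition of $\mathrm{Lip}(k,\bbG)$. \textbf{Step 4 (Whitney extension).} Apply the $\mathrm{Lip}(k,\bbG)$ Whitney extension theorem (from \cite{FSS01,FSC03,PV06}, or the variant relevant to Lemma~\ref{mainthmb3}) to the jet $(c_{J})_{|J|_{\bbG}\leq k-1}$ on $K$ to obtain $F\in\mathrm{Lip}(k,\bbG)$ with $F=f$ on $K$. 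Since $\eta>0$ is arbitrary, $f$ admits a Lusin approximation by $\mathrm{Lip}(k,\bbG)$ functions on $D$.

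I expect the main obstacle to be Step~3: converting the pointwise, density-based approximate Taylor condition into the uniform quantitative condition that the $\mathrm{Lip}(k,\bbG)$ Whitney extension theorem demands on a compact set. Unlike in the $C^{k}_{\bbG}$ setting of Theorem~\ref{mainthma}, here there is no pointwise Taylor expansion available to start from, only a density-$0$ bad set; the Egoroff-type passage must be performed simultaneously for all multi-indices $J$, must preserve the exact order of vanishing $d(x,y)^{k-1}$, and must respect the Carnot--Carath\'eodory metric rather than a Euclidean one. Step~1 is a secondary source of difficulty, since the measurability argument of Proposition~\ref{diffmeas} relies on $k$-approximate differentiability and cannot simply be quoted.
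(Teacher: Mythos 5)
Your overall architecture parallels the paper's (easy direction via the pointwise Taylor expansion of the approximant, hard direction via measurable jets plus Whitney extension), but the hard direction has two substantive problems. First, you have misread the hypothesis. You write that $A_{x,\delta}=\{y : |f(y)-P_x(y)| > \delta\, d(x,y)^{k-1}\}$ has density $0$ at $x$ for each $\delta>0$; this is the definition of \emph{approximate differentiability of order $k-1$}, not the approximate $(k-1)$-Taylor condition. Condition \eqref{approxTaylor} says that for some finite $M=M(x_0)$ the set $\{y: |f(y)-p(x_0,y)| > M\, d(x_0,y)^{k}\}$ has density zero: the exponent is $k$, not $k-1$, and it is a bounded approximate limsup, not an approximate limit tending to zero. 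The extra power of $d$ is exactly what makes a $\mathrm{Lip}(k,\bbG)$ conclusion possible; the weaker order-$(k-1)$ differentiability would only produce $C^{k-1}_{\bbG}$-type jets. Correspondingly, the paper's proof of Lemma \ref{mainthmb1} uses sets $W_j(x,r)$ defined with an increasing threshold $j$ and exponent $k$, in contrast to the fixed $\delta$ and limit-zero condition used in Lemma \ref{mainthma1}.

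Second, and more fundamentally, your Step~3 supplies no mechanism for converting the pointwise smallness of the bad sets into the Whitney compatibility estimates $|f_I(y)-X^I p(x,y)|\leq C\,d(x,y)^{k-|I|_{\bbG}}$ that $\mathrm{Lip}(k,F)$ requires. An Egoroff-type uniformization controls densities uniformly but does not compare $p(x,\cdot)$ with $p(y,\cdot)$ at two nearby base points. The paper's key device is the De~Giorgi lemma for Carnot groups (Lemma~\ref{degiorgi_carnot}): on the set $S(x,y,r,j)=[B(x,r)\cap B(y,r)]\setminus[W_j(x,r)\cup W_j(y,r)]$, which has measure at least $Vr^Q/2^{Q+1}$ by Lemma~\ref{lemma_R}, one has $|p(y,z)-p(x,z)|\leq 2jr^k$; the De~Giorgi lemma then upgrades this bound on a positive-fraction subset of a ball to the pointwise derivative estimate $|X^{I}(p(y,\cdot)-p(x,\cdot))(y)|\leq Cj\,r^{k-|I|_{\bbG}}$. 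Without this step your plan does not close. A minor point: your worry about Step~1 is unnecessary, since Lemma~\ref{approxtaylor} shows the approximate Taylor hypothesis implies approximate differentiability of order $k-1$, after which Proposition~\ref{diffmeas} applies verbatim; this is exactly Corollary~\ref{taylormeas}, which the paper quotes.
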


Note that the assumption $\mathcal{L}^{N}(D)<\infty$ in Theorem \ref{mainthmb} is necessary since (1) is a local condition but (2) requires uniform bounds. For example, $f(x)=x^{2}$ on $\mathbb{R}$ has an approximate $(k-1)$-Taylor polynomial for any $k\geq 1$ but does not even admit a Lusin approximation by bounded functions on $\mathbb{R}$.

The proofs of Theorem \ref{mainthma} and Theorem \ref{mainthmb} adapt, in a nontrivial way, techniques from the Euclidean setting \cite{LT94} to the Carnot group setting and apply suitable generalizations of the Whitney extension theorem in Carnot groups \cite{PV06}. The main additional steps in the Carnot group setting involve proving uniqueness and measurability of the coefficients of the approximate derivatives and a suitable De Giorgi type lemma for polynomials (Lemma \ref{degiorgi_carnot}).

We now describe the organization of the paper. 

In Section \ref{Background} we recall the main background, including Carnot groups, Taylor polyomials, approximate derivatives, and the two extensions of Whitney's extension theorem to Carnot groups that we will apply.

In Section \ref{DiGiorgi} we first prove that approximate derivatives and approximate Taylor polynomials have uniquely determined coefficients at each point (Proposition \ref{structure_of_p} ). This justifies the definitions and is necessary since, to the best of our knowledge, they have not been studied before in the Carnot group setting. We then prove an analogue of the De Giorgi lemma for polynomials in Carnot groups (Lemma \ref{degiorgi_carnot}). Finally, we prove an estimate for the measure of intersections of balls (Lemma \ref{lemma_R}).

In Section \ref{measurability}, we prove that if a measurable map has an $k$-approximate derivative or $(k-1)$-approximate Taylor polynomial at almost every point $x$, then the coefficients of the derivative polynomials are measurable functions of $x$ (Proposition \ref{diffmeas} and Corollary \ref{taylormeas}). The proof is direct and significantly more complicated than the analogue in Euclidean spaces, since we did not have any way to approximate $\mathrm{Lip}(k, \bbG)$ maps by $C^{k}_{\bbG}(\bbG)$ maps to mirror the Euclidean argument. 

In Section \ref{Lusindiff}, we combine the results from Section \ref{DiGiorgi} and Section \ref{measurability} with Euclidean techniques and the classical Whitney extension theorem in Carnot groups (Theorem \ref{genclassicalWhitney}, which first appeared in \cite{PV06}) to prove our main result Theorem \ref{mainthma}. 

In Section \ref{Lusintaylor}, we follow similar arguments with a different Whitney extension theorem (Theorem \ref{extension}, which first appeared in \cite{PV06}) to prove Theorem \ref{mainthmb}.

\bigskip

\textbf{Acknowledgements:} The authors thank Professor Fon-Che Liu for helpful comments and references regarding the Euclidean analogues of these results, particularly concerning the approximation of $\mathrm{Lip}(k, \bbR^{n})$ functions by $C^k$ functions in Euclidean spaces. They also thank the anonymous referee for useful comments which improved the paper.

Part of this paper was written while M. Capolli was a PhD student at the University of Trento advised by the other two authors. A. Pinamonti is member of {\em Gruppo Nazionale per l'Analisi Ma\-te\-ma\-ti\-ca, la Probabilit\`a e le loro Applicazioni} (GNAMPA) of the {\em Istituto Nazionale di Alta Matematica} (INdAM). G. Speight  was  supported by a grant from the Simons Foundation (\#576219, G. Speight). Part of this paper was written while G. Speight was visiting the University of Trento and supported by funds from the University of Trento.

\section{Background}\label{Background}

\subsection{Carnot groups}

A \emph{Lie group} $\bbG$ is a smooth manifold which is also a group for which multiplication and inversion are smooth maps. The \emph{Lie algebra} $\mathfrak{g}$ associated to a Lie group is the space of left invariant vector fields equipped with the Lie bracket $[\cdot, \cdot]\colon \mathfrak{g}\times\mathfrak{g}\to \mathfrak{g}$. This is defined by
\[[X,Y](f)=X(Y(f))-Y(X(f))\qquad \mbox{for smooth }f\colon \bbG\to \mathbb{R}.\]
We denote the direct sum of vector spaces $V$ and $W$ by $V\oplus W$.

\begin{definition}\label{Carnotdef}
A simply connected Lie group $\bbG$ is said to be a \emph{Carnot group of step $s$} if its Lie algebra $\mathfrak{g}$ is stratified of step $s$. This means that there exist linear subspaces $V_1, \dots ,V_s$ of $\mathfrak{g}$ such that
\[\mathfrak{g}=V_1\oplus \dots \oplus V_s,\]
with
\[[V_1,V_{i}]=V_{i+1} \mbox{ if }1\leq i\leq s-1 \qquad \mbox{and} \qquad [V_1,V_s]=\{0\}.\]
Here $[V_1,V_i]:=\mathrm{span}\{[a,b]: a\in V_1,\ b\in V_i\}.$
\end{definition}

We fix throughout the paper a Carnot group $\bbG$ of step $s$ with Lie algebra $\mathfrak{g}$ admitting a stratification as in Definition \ref{Carnotdef}. Let $m_i:=\dim(V_i)$, $h_i:=m_1+\dots +m_i$ for $1\leq i\leq s$, and $h_{0}:=0$.  We define $m:=m_1=\dim(V_{1})$ and  $N:=h_s=\dim{\mathfrak{g}}$. A basis $X_1,\dots, X_N$ of $\mathfrak{g}$ is \emph{adapted to the stratification} if $X_{h_{i-1}+1},\dots, X_{h_{i}}$ is a basis of $V_i$ for $1\leq i \leq s$. We fix such a basis.

The map $\exp \colon \mathfrak{g}\to \mathbb{G}$ is defined by $\exp(X)=\gamma(1)$, where $\gamma \colon [0,1] \to \mathbb{G}$ is the unique solution to $\gamma'(t)=X(\gamma(t))$ and $\gamma(0)=e$, where $e$ is the identity element of $\mathbb{G}$.  This \emph{exponential map} is a diffeomorphism between $\mathbb{G}$ and $\mathfrak{g}$. We will identify $\mathbb{G}$ with $\mathbb{R}^{N}$ using the correspondence:
\[ \exp(x_{1}X_{1}+\dots +x_{N}X_{N})\in \mathbb{G} \longleftrightarrow (x_{1}, \dots, x_{N})\in \mathbb{R}^{N}.\]
With this identification, the identity element is $0\in \mathbb{R}^{N}$ and the inverse of $x$ is $-x$. We denote the product of $x, y\in\mathbb{G}$ by $xy$. The formula for $xy$ in coordinates will be given later in this section.

\subsection{Dilations, Haar Measure, and CC Distance}\label{section_dialtions}

We denote points of $\mathbb{G}$ by $(x_{1}, \dots, x_{N})\in \mathbb{R}^{N}$. The \emph{homogeneity} $d_i\in\bbN$ of the coordinate $x_i$ is defined by
\[ d_i:=j \quad\text {whenever}\quad h_{j-1}+1\leq i\leq h_{j}.\]
For any $\lambda >0$, the \emph{dilation} $\delta_\lambda\colon \bbG\to\bbG$, is defined in coordinates by
\[\delta_\lambda(x_1, \dots ,x_N)=(\lambda^{d_1}x_1, \dots ,\lambda^{d_N}x_N).\]
Dilations satisfy $\delta_{\lambda}(xy)=\delta_{\lambda}(x)\delta_{\lambda}(y)$ and $\left(\delta_{\lambda}(x)\right)^{-1}=\delta_{\lambda}(x^{-1})$.

A \emph{Haar measure} $\mu$ on $\mathbb{G}$ is a non-trivial Borel measure on $\mathbb{G}$ so that $\mu(gE)~=~\mu(E)$ for any $g\in \mathbb{G}$ and Borel set $E\subset \mathbb{G}$. Such a measure is unique up to scaling by a positive constant, so sets of measure zero are defined without ambiguity. In our identification of $\bbG$ with $\mathbb{R}^N$, any Haar measure is simply a constant multiple of $N$ dimensional Lebesgue measure $\mathcal{L}^{N}$. Our results are not sensitive to the choice of Haar measure, so we use $\mathcal{L}^N$ throghout. The terms measure and measurable will mean Lebesgue measure and Lebesgue measurable throughout the paper. 

Recall that a curve $\gamma\colon [a,b]\to \mathbb{R}^{N}$ is \emph{absolutely continuous} if it is differentiable almost everywhere, $\gamma' \in L^{1}[a,b]$, and $\gamma(t_{2})-\gamma(t_{1})=\int_{t_{1}}^{t_{2}} \gamma'(t) \dd t$ for all $t_{1}, t_{2}\in [a,b]$.

\begin{definition}\label{horizontalcurve}
An absolutely continuous curve $\gamma\colon [a,b]\to \mathbb{G}$ is \emph{horizontal} if there exist $u_{1}, \dots, u_{m}\in L^{1}[a,b]$ such that $\gamma'(t)=\sum_{j=1}^{m}u_{j}(t)X_{j}(\gamma(t))$ for almost every $t\in [a,b]$.
Define the \emph{horizontal length} of a horizontal curve $\gamma$ by $L(\gamma)=\int_{a}^{b}|u(t)|\dd t$, where $u=(u_{1}, \dots, u_{m})$ and $|\cdot|$ denotes the Euclidean norm on $\mathbb{R}^{m}$.
\end{definition}

The Chow-Rashevskii Theorem asserts that any two points of $\mathbb{G}$ can be connected by horizontal curves \cite[Theorem 9.1.3]{BLU07}. This allows us to define the \emph{Carnot-Carath\'eodory distance (CC distance)} as follows.

\begin{definition}
The \emph{CC distance} between $x,y\in \bbG$ is
\[d(x,y):=\inf \{ L(\gamma) : \gamma \colon [0,1]\to \mathbb{G} \mbox{ horizontal joining }x\mbox{ to }y \}.\]
\end{definition}

This CC distance satisfies $d(zx,zy)=d(x,y)$ and $d(\delta_{r}(x),\delta_{r}(y))=rd(x,y)$ for $x, y, z\in \mathbb{G}$ and $r>0$. We denote the CC ball with center $x$ and radius $r$ by $B(x,r)$. It can be proved that $\mathcal{L}^N(B(x,r))=r^Q \mathcal{L}^N(B(0,1))$, where $Q=\sum_{i=1}^s i m_i$. For the rest of the paper we denote $V:=\mathcal{L}^N(B(0,1))$.

The CC distance induces on $\mathbb{G}$ the same topology as the Euclidean distance, so we can refer unambiguously to open, closed, and compact sets. The CC distance is not bi-Lipschitz equivalent to the Euclidean distance. However, it can be proved that for any compact set $K\subset \mathbb{G}$ there are constants $c_1,c_2>0$ such that
\begin{equation}\label{disugdist}
c_1|x-y|\leq d(x,y)\leq c_2|x-y|^{\frac{1}{s}}\qquad \mbox{for all } x,y\in K.
\end{equation}
For convenience, we also denote $d(x,0)$ by $d(x)$ or $\|x\|$. The map $\|\cdot \| \colon \bbG\to \bbG$ is an example of a homogeneous norm, which are defined more generally as follows.
 
\begin{definition}
A function $\| \cdot\|_{\B{G}}:\mathbb{G}\to [0,\infty)$ is called a homogeneous norm if it satisfies both
\begin{enumerate}
\item $\|x\|_{\B{G}}>0$ if and only if $x\neq 0$,
\item $\|\delta_{r} x\|_{\B{G}}=r\|x\|_{\B{G}}$ for every $x\in \mathbb{G}$ and $r>0$.
\end{enumerate}
\end{definition}

The following is \cite[Proposition 5.1.4]{BLU07}. 

\begin{proposition}\label{equivalenza}
Let $\|\cdot\|_{\mathbb{G}}$ be a homogeneous norm on $\mathbb{G}$. Then there exists a constant $c>0$ such that
\[
c^{-1}\| x\|_{\mathbb{G}}\leq d(x)\leq c\|x\|_{\mathbb{G}} \quad \text{for all } x\in\mathbb{G}.
\]
\end{proposition}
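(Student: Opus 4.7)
The plan is to reduce to a compactness argument on the CC unit sphere $S := \{x \in \mathbb{G} : d(x) = 1\}$ and then use the fact that both $d$ and $\|\cdot\|_{\mathbb{G}}$ scale by $\lambda$ under the dilation $\delta_\lambda$ to propagate the inequality from $S$ to all of $\mathbb{G}$. The strategy is the standard one: establish uniform positive upper and lower bounds for $\|\cdot\|_{\mathbb{G}}$ on $S$ by compactness, then bootstrap via dilations.

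First, I would show that $S$ is compact in the Euclidean topology on $\mathbb{G} \cong \mathbb{R}^N$. Since $d$ is continuous and the Euclidean and CC topologies on $\mathbb{G}$ coincide, $S$ is closed as a level set of $d$. For boundedness, inequality \eqref{disugdist} (applied on a compact Euclidean neighbourhood of the origin large enough to contain $B(0,1)$) shows that the CC unit ball is Euclidean bounded, so $S$ is bounded in $\mathbb{R}^N$ as well. Hence $S$ is compact. Assuming (as is standard for homogeneous norms in \cite{BLU07}) that $\|\cdot\|_{\mathbb{G}}$ is continuous on $\mathbb{G}$, the extreme value theorem produces
\[ m := \min_{y \in S} \|y\|_{\mathbb{G}}, \qquad M := \max_{y \in S} \|y\|_{\mathbb{G}}, \]
with $m > 0$ because $0 \notin S$ and property (1) of the definition forces $\|y\|_{\mathbb{G}} > 0$ for $y \neq 0$.

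To conclude, for any $x \in \mathbb{G} \setminus \{0\}$ I set $\lambda := d(x) > 0$ and observe that $d(\delta_{1/\lambda}(x)) = 1$ by the scaling property of the CC distance, so $\delta_{1/\lambda}(x) \in S$. Property (2) of the homogeneous norm then yields
\[ m \, d(x) \;\leq\; \|x\|_{\mathbb{G}} \;=\; \lambda \, \|\delta_{1/\lambda}(x)\|_{\mathbb{G}} \;\leq\; M \, d(x), \]
and choosing $c := \max(M, 1/m)$ gives the required equivalence (the case $x = 0$ being trivial). The main subtlety I anticipate is the continuity of $\|\cdot\|_{\mathbb{G}}$: this is indispensable for applying the extreme value theorem on $S$ but is not literally listed among the two defining properties stated in the paper. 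It is, however, part of the standard convention for homogeneous norms in the cited reference \cite{BLU07}, and is tacitly assumed here; without it, the statement is false in general.
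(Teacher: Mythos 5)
The paper does not prove this proposition itself; it is cited verbatim from \cite[Proposition 5.1.4]{BLU07}, so there is no in-paper proof to compare against. Your overall strategy (uniform bounds by compactness on a unit sphere, then propagate by dilations) is the standard one and matches the cited source in spirit, and you were right to flag that continuity of $\|\cdot\|_{\mathbb{G}}$ is not among the two properties listed in the paper's Definition yet is indispensable for the extreme value step.

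There is, however, a circularity in your compactness argument for $S=\{x: d(x)=1\}$. You invoke \eqref{disugdist} ``applied on a compact Euclidean neighbourhood of the origin large enough to contain $B(0,1)$,'' but the existence of such a compact Euclidean set is precisely the assertion that $B(0,1)$ is Euclidean-bounded, which is what you are trying to establish. Inequality \eqref{disugdist} only gives local two-sided control of $d$ against $|\cdot|$ on a pre-chosen compact $K$; it does not, on its own, tell you that the CC unit ball fits inside any such $K$. Two standard ways to repair this. (i) Apply \eqref{disugdist} on the fixed compact $K=\overline{B_E(0,1)}$ to obtain $d(x)\geq c_1$ for $|x|=1$; since $(\mathbb{G},d)$ is a length space, $B(0,c_1)$ is connected, contains $0$, and avoids the Euclidean sphere $|x|=1$, hence $B(0,c_1)\subset B_E(0,1)$; then $B(0,1)=\delta_{1/c_1}(B(0,c_1))$ is Euclidean-bounded because $\delta_{1/c_1}$ is a polynomial map. (ii) Alternatively, avoid $S$ altogether: take the unit sphere $T$ of the explicit homogeneous norm $\|x\|_{\mathrm{exp}}=\sum_j |x^{(j)}|^{1/j}$, which is manifestly Euclidean-compact ($|x^{(j)}|\leq 1$ on $T$); minimize and maximize the continuous positive functions $d$ and $\|\cdot\|_{\mathbb{G}}$ on $T$ and propagate both comparisons by dilations. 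Either repair closes the gap; as written the boundedness of $S$ is assumed rather than proved.
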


\subsection{Polynomials and Smooth Functions}
In this section we give the relevant background on Polynomials and smooth functions on Carnot groups. We follow the notation introduced in \cite{FS}. A \emph{multi-index} $J=(j_1,\dots, j_N)$ is an ordered list of $N$ non-negative integers. Given a multi-index, we define its \emph{norm} as $|J| = \sum_{i=1}^N j_i$ and its \emph{homogeneous norm} as $|J|_{\B{G}} = \sum_{i=1}^N d_i j_i$. Whether $|\cdot|$ refers to the Euclidean norm on Euclidean space or the norm of a multi-index will be clear from the context. Finally $J!:=j_{1}!j_{2}!\dots j_{N}!$. The two norms are related by the inequality:
\begin{equation}\label{degree1}
|J|\leq |J|_{\mathbb{G}}\leq d_{N}|J| \mbox{ for every multi-index }J.
\end{equation}

We define homogeneous polynomials as follows \cite{PV06, BLU07}.

\begin{definition}
Given coordinates $x=(x_1,\dots,x_N)\in\mathbb{R}^N$, we define:
\begin{itemize}
\item A \emph{monomial of homogeneous degree $d\geq 0$} is a polynomial of the form $x^J:=x_1^{j_1}\dots x_N^{j_N}$ for some multi-index $J=(j_1,\dots, j_N)$ with $|J|_{\mathbb{G}}=d$.
\item A \emph{homogeneous polynomial of homogeneous degree $d$} is a linear combination of monomials of the same homogeneous degree $d$. 
\item A \emph{polynomial of homogeneous degree at most $d$} is a linear combination of monomials of homogeneous degree at most $d$.
\end{itemize}
\end{definition}

The following lemma follows easily from the definitions.

\begin{lemma}\label{polyfacts}
For every homogeneous polynomial $P$ of homogeneous degree $d$,
\begin{equation}\label{homogeneous}
P(\delta_{\lambda}x)=\lambda^{d} P(x).
\end{equation}  
If $P$, $Q$ are homogeneous polynomials of homogeneous degree $d_{1}$, $d_{2}$ respectively, then the product $PQ$ is a homogeneous polynomial of homogeneous degree $d_{1}+d_{2}$.
\end{lemma}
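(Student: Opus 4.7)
The plan is to reduce both claims to the level of monomials and then extend by (bi)linearity, since a homogeneous polynomial of homogeneous degree $d$ is by definition a linear combination of monomials $x^J$ with $|J|_{\bbG}=d$.

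For part (1), I would start with a single monomial $x^J = x_1^{j_1}\cdots x_N^{j_N}$ with $|J|_{\bbG}= \sum_{i=1}^N d_i j_i = d$. Applying the definition of dilation in coordinates,
\[
(\delta_\lambda x)^J = \prod_{i=1}^N (\lambda^{d_i} x_i)^{j_i} = \lambda^{\sum_{i=1}^N d_i j_i}\, x^J = \lambda^{|J|_{\bbG}} x^J = \lambda^d x^J.
\]
If $P = \sum_{|J|_{\bbG}=d} c_J x^J$, summing the above identity against the coefficients $c_J$ gives $P(\delta_\lambda x) = \lambda^d P(x)$.

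For part (2), I would again pass to monomials: given $x^J$ of homogeneous degree $d_1$ and $x^K$ of homogeneous degree $d_2$, the product is $x^J x^K = x^{J+K}$, and the homogeneous norm is additive in the multi-index, so $|J+K|_{\bbG} = |J|_{\bbG}+|K|_{\bbG} = d_1 + d_2$. Thus $x^J x^K$ is a monomial of homogeneous degree $d_1+d_2$. Writing $P = \sum_{|J|_{\bbG}=d_1} a_J x^J$ and $Q = \sum_{|K|_{\bbG}=d_2} b_K x^K$ and expanding $PQ = \sum_{J,K} a_J b_K x^{J+K}$ then exhibits $PQ$ as a linear combination of monomials all of homogeneous degree $d_1+d_2$, which is precisely a homogeneous polynomial of homogeneous degree $d_1+d_2$.

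There is essentially no obstacle here; the only thing to be careful about is to use the \emph{homogeneous} norm $|J|_{\bbG} = \sum d_i j_i$ rather than the Euclidean norm $|J| = \sum j_i$, since the dilation weights each coordinate $x_i$ by $\lambda^{d_i}$. The entire argument is a direct computation from the definitions of dilation, monomial, and homogeneous degree, which is why the authors flag that the lemma follows easily.
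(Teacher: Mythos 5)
Your proof is correct, and it is exactly the direct computation the paper has in mind when it says the lemma ``follows easily from the definitions'': reduce to monomials using the definition of $\delta_\lambda$ and the homogeneous degree $|J|_{\bbG}=\sum_i d_i j_i$, then extend by linearity (respectively bilinearity). The paper gives no written proof for this lemma, so there is nothing further to compare against.
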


The explicit expression of $xy$ depends on  the Baker-Campbell-Hausdorff formula. It has the form
\begin{equation}\label{exprecoord}
xy=x+y+\mathcal{Q}(x,y)\quad \mbox{for all }x,y\in \mathbb{G}.
\end{equation}
Here $\mathcal{Q}=(\mathcal{Q}_1,\dots,\mathcal{Q}_N)\colon\mathbb{R}^N\times\mathbb{R}^N\to \mathbb{R}^N$ and each $\mathcal{Q}_i$ is a homogeneous polynomial of degree $d_i$. By \cite[(4)]{FSS}, $\mathcal{Q}_i$ is identically zero for $1\leq i\leq m$  and otherwise
\begin{equation}\label{defQi}
\mathcal{Q}_i(x,y)=\sum_{k,h} \mathcal{R}^i_{h,k}(x,y)(x_ky_h-x_hy_k), \quad \mbox{for}\ m<i\leq N.
\end{equation}
Here $\mathcal{R}^i_{h,k}$ are homogeneous polynomials of degree $d_i-d_k-d_h$ and the sum is extended over all $h, k$ such that $d_h+d_k \leq d_i$.

The following result will be helpful later. Recall that $|\cdot|$ denotes the Euclidean norm and $\bbG$ is identified with $\mathbb{R}^{m_1}\times\dots\times \mathbb{R}^{m_s}=\mathbb{R}^N$ via exponential coordinates. 

\begin{lemma}\label{dxj}
For every multi-index $J$, there exists $\tilde c>0$, depending only on $|J|_{\bbG}$, such that 
\[|x^J|\leq \tilde c d(x)^{|J|_{\mathbb{G}}}\qquad \mbox{for all }x\in\mathbb{G}.\]
\end{lemma}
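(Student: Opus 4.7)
The plan is to exploit the scaling identity $(\delta_\lambda x)^J = \lambda^{|J|_\bbG}x^J$ (which is a special case of \eqref{homogeneous}, since $x^J$ is itself a monomial of homogeneous degree $|J|_\bbG$) together with Proposition~\ref{equivalenza}. Since Proposition~\ref{equivalenza} compares the CC distance to any homogeneous norm, I would first build a convenient homogeneous norm adapted to the coordinates, and then extract coordinate-wise bounds from it.

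Concretely, I would set
\[\|x\|_* := \max_{1\leq i\leq N}|x_i|^{1/d_i}\]
and verify directly that $\|\cdot\|_*$ is a homogeneous norm in the sense defined before Proposition~\ref{equivalenza}: positivity is immediate because $\|x\|_*=0$ forces every $x_i=0$, and the dilation identity follows from $|\lambda^{d_i}x_i|^{1/d_i}=\lambda|x_i|^{1/d_i}$ for $\lambda>0$. By Proposition~\ref{equivalenza} there is then a constant $c>0$, depending only on $\bbG$, such that $\|x\|_*\leq c\,d(x)$ for all $x\in\bbG$.

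From the definition of $\|\cdot\|_*$, one immediately gets the coordinate-wise estimate $|x_i|\leq \|x\|_*^{d_i}\leq c^{d_i}d(x)^{d_i}$ for each $i$. Multiplying these bounds raised to the appropriate powers yields
\[|x^J|=\prod_{i=1}^N |x_i|^{j_i}\leq \prod_{i=1}^N c^{d_i j_i}\,d(x)^{d_i j_i}=c^{|J|_\bbG}\,d(x)^{|J|_\bbG},\]
so the conclusion holds with $\tilde c := c^{|J|_\bbG}$, which depends only on $|J|_\bbG$ (and on the fixed group $\bbG$) as required.

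There is no real obstacle here: the whole content is that $x\mapsto\max_i|x_i|^{1/d_i}$ is a valid homogeneous norm, which reduces the lemma to a one-line application of Proposition~\ref{equivalenza}. The only point that needs a moment of care is to check that the power $c^{|J|_\bbG}$ depends on $J$ only through $|J|_\bbG$, which is clear from the computation above; had one instead used an $\ell^1$- or $\ell^p$-type homogeneous norm, one would just pick up additional combinatorial constants of the same shape.
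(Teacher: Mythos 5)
Your proof is correct and follows essentially the same strategy as the paper: introduce a convenient homogeneous norm, compare it to $d$ via Proposition~\ref{equivalenza}, extract the coordinate-wise bound $|x_i|\leq \mathrm{const}\cdot d(x)^{d_i}$, and multiply. The paper uses the $\ell^1$-type norm $\sum_{j=1}^s |x^{(j)}|^{1/j}$ in place of your $\max_i |x_i|^{1/d_i}$, which leads it to the constant $c^{|J|}$ and an implicit appeal to $|J|\leq|J|_{\mathbb{G}}$; your max-norm gives $c^{|J|_{\mathbb{G}}}$ directly, a marginally cleaner bookkeeping, but there is no substantive difference.
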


\begin{proof} 
The following formula defines a homogeneous norm on $\bbG$ \cite{BLU07}:
\begin{equation*}
\| x\|_{\mathbb{G}}:=\sum_{j=1}^s |x^{(j)}|^{\frac{1}{j}},\quad x=(x^{(1)},\dots, x^{(s)})\in \mathbb{G},\ x^{(j)}\in \bbR^{m_{j}}.
\end{equation*}
By Proposition \ref{equivalenza}, there exists $c>0$ such that $\|x\|_{\mathbb{G}}\leq c d(x)$ for all $x\in\mathbb{G}$. Hence, replacing $c$ by $\max\{ c^j:1\leq j\leq s \}$, we obtain
\begin{equation*}
|x^{(j)}|\leq c d(x)^{j} \qquad \mbox{for all } j=1,\dots, s.
\end{equation*}
Writing $x=(x_{1},x_{2},\dots, x_{N})$ and recalling that $d_i$ is the homogeneity of  $x_i$,
\begin{equation}\label{1dc}
|x_i|\leq c d(x)^{d_i} \qquad \mbox{for all }i=1,\dots, N.
\end{equation}
Let $J=(j_1,\dots, j_N)$ be a multi-index and $\tilde c:=c^{|J|}$. Then by \eqref{1dc},
\[
|x^J|=|x_1^{j_1}\dots x_N^{j_N}|\leq \tilde c d(x)^{\sum_{i=1}^N j_i d_i}=\tilde c d(x)^{|J|_{\mathbb{G}}}.
\]
\end{proof}

Given a multi-index $J$, we denote higher order derivatives by $X^J=X_1^{j_1}\dots X_N^{j_N}$ and $D^{J}=\left( \frac{\partial}{\partial x} \right)^{J}=\frac{\partial^{j_{1}}}{\partial x_{1}^{j_{1}}}\dots \frac{\partial^{j_{N}}}{\partial x_{1}^{j_{N}}}$. The following lemma is \cite[Proposition 20.1.5]{BLU07}.

\begin{lemma}\label{esxa}
For every multi-index $\alpha$, there exist homogeneous polynomials $Q_{\beta,\alpha}$ of homogeneous degree $|\beta|_{\mathbb{G}}-|\alpha|_{\mathbb{G}}$ such that
\[
X^{\alpha}(x)=\left( \frac{\partial}{\partial x} \right)^{\alpha} + \sum_{\substack{\beta \neq \alpha \\ |\beta| \leq |\alpha|\\ |\beta|_{\mathbb{G}}\geq |\alpha|_{\mathbb{G}}}} Q_{\beta,\alpha}(x) \left(\frac{\partial}{\partial x}\right)^\beta.
\]
This equation is meant in the sense that both left and right define differential operators on $\mathbb{R}^{N}$ whose action agrees on $C^{\infty}$ functions on $\mathbb{R}^{N}$.
\end{lemma}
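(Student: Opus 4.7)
The plan has two stages: establish the formula for a single $X_i$ directly from the Baker--Campbell--Hausdorff formula (the case $|\alpha|=1$), then induct on $|\alpha|$. For the base case, I would use that the left-invariant vector field $X_i$ is characterized by $X_i f(x) = \frac{d}{dt}\big|_{t=0} f(x\cdot t e_i)$, where $e_i$ is the $i$-th coordinate direction in $\mathfrak{g}\cong\mathbb{R}^N$. Combining with \eqref{exprecoord} yields
\[
X_i = \frac{\partial}{\partial x_i} + \sum_{j:\, d_j > d_i} c_{j,i}(x)\, \frac{\partial}{\partial x_j},\qquad c_{j,i}(x) := \frac{\partial \mathcal{Q}_j}{\partial y_i}\bigg|_{y=0}(x).
\]
Since each $\mathcal{Q}_j$ is homogeneous of degree $d_j$ under the joint dilation $(x,y)\mapsto (\delta_\lambda x, \delta_\lambda y)$, differentiating in $y_i$ shows $c_{j,i}$ is a homogeneous polynomial of degree $d_j - d_i$; polynomials of negative degree vanish, and the explicit formula \eqref{defQi} rules out nonzero constant contributions when $d_j = d_i$ with $j\neq i$, so only $d_j>d_i$ survives. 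The essential auxiliary observation used throughout is that $\partial/\partial x_k$, and consequently $X_i$, reduces the homogeneous degree of a homogeneous polynomial by $d_k$, respectively $d_i$, where a polynomial of negative degree is understood to be zero.

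For the induction step, write $\alpha = \alpha' + e_i$ and expand
\[
X^\alpha f = X_i(X^{\alpha'} f) = X_i(\partial^{\alpha'} f) + \sum_{\beta'} X_i\bigl(Q_{\beta', \alpha'}(x)\, \partial^{\beta'} f\bigr)
\]
using the inductive hypothesis on $X^{\alpha'}$. Applying Leibniz and substituting the base case formula in each summand produces terms of the form (polynomial)$\cdot \partial^\gamma f$ with $|\gamma|\leq |\alpha'|+1=|\alpha|$. For each such term I would check that the coefficient is homogeneous of degree exactly $|\gamma|_\bbG - |\alpha|_\bbG$, so that whenever $|\gamma|_\bbG < |\alpha|_\bbG$ the coefficient has negative degree and vanishes, automatically enforcing the required inequality $|\gamma|_\bbG \geq |\alpha|_\bbG$. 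The coefficient of $\partial^\alpha$ itself reduces to $1$, coming from the leading $\partial_i \partial^{\alpha'}$ inside $X_i \partial^{\alpha'}$; every other potential contribution to $\partial^\alpha$ is killed by the strict inequality $d_k>d_i$ in the base case together with the inductive constraint $\beta'\neq\alpha'$.

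The main obstacle is the bookkeeping in the induction step: organizing the several types of terms produced by Leibniz, tracking homogeneous degrees across products and differentiations, and verifying both index constraints together with the normalization of the leading coefficient. The degree-vanishing principle makes everything consistent, since whenever an inequality would be violated the corresponding polynomial coefficient has negative homogeneous degree and is therefore identically zero, so the offending terms never actually appear.
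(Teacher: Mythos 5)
The paper does not prove Lemma \ref{esxa}; it is quoted verbatim from \cite[Proposition 20.1.5]{BLU07}. So there is no ``paper's proof'' to compare against, and what you have supplied is an independent argument. Your plan is essentially sound: deriving the base case from left-invariance and the Baker--Campbell--Hausdorff coordinates, extracting the homogeneity of $c_{j,i}(x)=\partial_{y_i}\mathcal{Q}_j(x,0)$ from joint $\delta_\lambda$-homogeneity, and then pushing an induction on $|\alpha|$ through the Leibniz rule while letting the ``negative-degree polynomials vanish'' principle enforce the index constraints is exactly the bookkeeping needed, and all five families of terms produced by the expansion do land in the required index set with the correct coefficient degree $|\beta|_{\bbG}-|\alpha|_{\bbG}$.

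One point needs to be made precise, however. You write $\alpha=\alpha'+e_i$ and then expand $X^\alpha f = X_i(X^{\alpha'}f)$, but with the paper's convention $X^J = X_1^{j_1}\cdots X_N^{j_N}$ this identity is \emph{not} true for an arbitrary $i$ in the support of $\alpha$: the $X_j$ do not commute, and $X_i X^{\alpha-e_i}$ differs from $X^\alpha$ unless $\alpha_j=0$ for all $j<i$. You must take $i$ to be the \emph{smallest} index with $\alpha_i>0$, for which $X^\alpha = X_i X^{\alpha-e_i}$ holds by definition. With that choice fixed, the rest of your induction goes through. A second, smaller remark: when justifying that $c_{j,i}\equiv 0$ for $d_j=d_i$ with $j\neq i$, the cleanest route via \eqref{defQi} is to observe that $\partial_{y_i}\mathcal{Q}_j(x,0)=\sum_{h,k}\mathcal{R}^j_{h,k}(x,0)(x_k\delta_{hi}-x_h\delta_{ki})$, and the constraint $d_h+d_k\leq d_j=d_i$ together with $d_k,d_h\geq 1$ shows that neither $h=i$ nor $k=i$ can occur in the sum, so the whole expression vanishes; this is a bit sharper than appealing generally to ``no nonzero constant contributions.''
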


The following lemma follows from Lemma \ref{polyfacts}, Lemma \ref{esxa}, and the definitions.

\begin{lemma}\label{polyderiv}
Suppose $P$ is a homogeneous polynomial of homogeneous degree $k$ and $\alpha$ is a multi-index. Then $X^{\alpha}P$ is either identically zero or is a homogeneous polynomial of homogeneous degree $k-|\alpha|_{\bbG}$.
\end{lemma}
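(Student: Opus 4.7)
The plan is to reduce $X^{\alpha}P$ to a sum of products of homogeneous polynomials via Lemma \ref{esxa}, and then use Lemma \ref{polyfacts} together with a direct computation on monomials to track the homogeneous degree of each summand.

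First I would apply Lemma \ref{esxa} to write
\[ X^{\alpha}P = \left(\tfrac{\partial}{\partial x}\right)^{\alpha}\!P \; + \sum_{\substack{\beta \neq \alpha \\ |\beta|\leq|\alpha| \\ |\beta|_{\bbG}\geq|\alpha|_{\bbG}}} Q_{\beta,\alpha}(x)\,\left(\tfrac{\partial}{\partial x}\right)^{\beta}\!P, \]
where each $Q_{\beta,\alpha}$ is a homogeneous polynomial of homogeneous degree $|\beta|_{\bbG}-|\alpha|_{\bbG}\geq 0$. The goal is to verify that every term in this sum is either identically zero or homogeneous of homogeneous degree $k-|\alpha|_{\bbG}$.

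Next I would establish the following auxiliary fact: for any multi-index $\gamma$, the Euclidean partial derivative $(\partial/\partial x)^{\gamma}P$ is either identically zero or a homogeneous polynomial of homogeneous degree $k-|\gamma|_{\bbG}$. This reduces to a monomial calculation. Writing $P=\sum_{|J|_{\bbG}=k}c_{J}x^{J}$, we have $(\partial/\partial x)^{\gamma}x^{J}=0$ when some $\gamma_{i}>j_{i}$, and otherwise $(\partial/\partial x)^{\gamma}x^{J}=(J!/(J-\gamma)!)\,x^{J-\gamma}$, which is a monomial of homogeneous degree $|J-\gamma|_{\bbG}=|J|_{\bbG}-|\gamma|_{\bbG}=k-|\gamma|_{\bbG}$. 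Summing over $J$ yields a homogeneous polynomial of homogeneous degree $k-|\gamma|_{\bbG}$, unless all contributions cancel or vanish, in which case the result is identically zero.

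Applying this with $\gamma=\alpha$ handles the first term. Applying it with $\gamma=\beta$ (for each $\beta$ in the sum), $(\partial/\partial x)^{\beta}P$ is either zero or homogeneous of degree $k-|\beta|_{\bbG}$. Multiplying by $Q_{\beta,\alpha}$, which is homogeneous of degree $|\beta|_{\bbG}-|\alpha|_{\bbG}$, and invoking the product statement in Lemma \ref{polyfacts}, each summand is either zero or homogeneous of degree $(k-|\beta|_{\bbG})+(|\beta|_{\bbG}-|\alpha|_{\bbG})=k-|\alpha|_{\bbG}$. Finally, since a sum of homogeneous polynomials of a common homogeneous degree is again homogeneous of that degree (or the zero polynomial), $X^{\alpha}P$ is either identically zero or homogeneous of homogeneous degree $k-|\alpha|_{\bbG}$. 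There is no real obstacle here; the only care needed is bookkeeping of the indexing conditions in Lemma \ref{esxa} to ensure each $Q_{\beta,\alpha}$ has a nonnegative degree, which is guaranteed by the constraint $|\beta|_{\bbG}\geq|\alpha|_{\bbG}$.
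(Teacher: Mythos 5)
Your proof is correct and takes essentially the same route the paper intends: the paper simply asserts that the lemma follows from Lemma \ref{polyfacts}, Lemma \ref{esxa}, and the definitions, and your argument fills in exactly those details (expand $X^{\alpha}$ via Lemma \ref{esxa}, track the homogeneous degree of each Euclidean partial of a monomial, and use the product rule for homogeneous polynomials from Lemma \ref{polyfacts}).
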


Given an open set $\Omega\subset \mathbb{G}$ and $k\in\mathbb{N}$, we define
\[C^k_{\mathbb{G}}(\Omega):=\left\{u: \Omega\to \mathbb{R} \colon X^J u \mbox{ exists and is continuous for all }  |J|_{\B{G}}\leq k\right\}. \]
Polynomials of the form $\sum_{|J|_{\mathbb{G}}\leq k} \alpha_J(x_0)\frac{(x_0^{-1}x)^J}{J!}$ are called \emph{polynomials centered at $x_0\in\mathbb{G}$}. Taylor polynomials in Carnot groups are defined as follows \cite{PV06}.

\begin{definition}
Let $\Omega\subset\B{G}$ be an open set and $k$ a non-negative integer. Let  $u\in C^k_{\mathbb{G}}(\Omega)$ and $x_0\in\Omega$. The \emph{Taylor polynomial of $u$ of homogeneous degree $k$ centered at $x_{0}$} is the unique polynomial $P_k(u,x_0,x)$ centered at $x_{0}$ with the property that
\begin{equation*}
(X^J u)(x_0)=X^J (P_k(u, x_0, x))|_{x=x_0}\qquad \mbox{for all }|J|_{\mathbb{G}}\leq k.
\end{equation*}
\end{definition}

The following results are well known, see for instance \cite[Theorem 1]{ACC} and \cite[Corollary 1.44]{FS}. They motivate the definition of approximate Taylor polynomial and approximate differentiability to be given in Definition \ref{apTaylorapDiif}.

\begin{theorem} \label{Teo_Taylor}
Fix $k\in\mathbb{N}$, $\Omega\subset\mathbb{G}$ open, and  $u\in C^k_{\mathbb{G}}(\Omega)$. Then for every $x_0\in \Omega$
\[\limsup_{x\to x_0}\frac{|u(x)-P_{k-1}(u,x_0,x)|}{d(x,x_0)^k}<\infty\]
and
\[ \lim_{x\to x_0}\frac{|u(x)-P_k(u,x_0,x)|}{d(x,x_0)^k}=0. \]
\end{theorem}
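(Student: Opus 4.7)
The first step is to reduce to $x_0 = 0$ by left translation. Since each $X_j$ is left-invariant, setting $\tilde u(y) := u(x_0 y)$ yields $(X^J \tilde u)(y) = (X^J u)(x_0 y)$ for every multi-index $J$, and in particular $(X^J \tilde u)(0) = (X^J u)(x_0)$. The uniqueness of the Taylor polynomial (which follows from the defining conditions together with the fact that a polynomial of homogeneous degree $\leq k$ is determined by its horizontal derivatives up to order $k$ at a single point) then gives $P_k(\tilde u, 0, y) = P_k(u, x_0, x_0 y)$. Since $d(x, x_0) = d(x_0^{-1} x, 0)$, the two estimates for $u$ at $x_0$ reduce to the corresponding estimates for $\tilde u$ at $0$.

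Second, introduce the remainder $R_k := u - P_k(u, 0, \cdot)$. By construction $R_k \in C^k_{\bbG}$ near $0$ and $X^J R_k(0) = 0$ for every $|J|_{\bbG} \leq k$; similarly $R_{k-1} := u - P_{k-1}(u, 0, \cdot)$ has vanishing horizontal derivatives up to order $k-1$ at $0$, while its horizontal derivatives of order $k$ are merely continuous and locally bounded. The theorem thus reduces to a key vanishing claim: if $v \in C^k_{\bbG}$ near $0$ with $X^J v(0) = 0$ for all $|J|_{\bbG} \leq k$, then $v(x) = o(d(x)^k)$ as $x \to 0$; a variant with one fewer order of vanishing gives $O(d(x)^k)$.

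To prove the key claim, I would connect $0$ to $x$ through a concatenation of smooth horizontal segments $\gamma_i(t) = y_{i-1} \exp(t c_i)$, $c_i \in V_1$, with $y_0 = 0$, $y_M = x$, and parameter increments $|s_i| \leq C d(x)$ for constants $M$ and $C$ depending only on $\bbG$. Such a decomposition follows from Chow's theorem together with the stratification and the ball--box theorem. On each piece, $\phi_i(t) := v(\gamma_i(t))$ is genuinely $C^k$ in $t$, with $\phi_i^{(j)}(t) = (X_{c_i}^j v)(\gamma_i(t))$ a linear combination of $X^J v(\gamma_i(t))$ for $|J| \leq j$ and hence $|J|_{\bbG} \leq j$. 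Applying one-dimensional Taylor to each $\phi_i$ and summing telescopically across the segments propagates the estimate from $y_0 = 0$, where all relevant horizontal derivatives vanish and those of order $k$ are continuous, out to $y_M = x$; the length scaling $|s_i| \leq C d(x)$ produces the required $o(d(x)^k)$ remainder. The main obstacle is precisely this propagation: the intermediate values $X^J v(y_{i-1})$ for $i>1$ do not vanish outright and must be shown to be small, which I would do inductively on $k - |J|_{\bbG}$, since the same claim applied to lower weight horizontal derivatives of $v$ supplies the required pointwise decay at $y_{i-1}$. A possible alternative route is to use Lemma \ref{esxa} to translate vanishing of $X^J v(0)$ into vanishing of the corresponding Euclidean derivatives $D^\beta v(0)$ by a triangular inversion organized by $|\cdot|_{\bbG}$, then apply the classical Taylor theorem in $\bbR^N$ and reconvert the remainder via the estimate $|x^J| \leq \tilde c\, d(x)^{|J|_{\bbG}}$ from Lemma \ref{dxj}.
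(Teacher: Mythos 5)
The paper does not prove Theorem \ref{Teo_Taylor}; it cites it as well known, referencing \cite[Theorem 1]{ACC} and \cite[Corollary 1.44]{FS}, so there is no internal proof to compare against. Your primary strategy --- reduce to $x_0=0$ by left translation, pass to the remainder $v=u-P_k(u,0,\cdot)$ so that $X^J v(0)=0$ for $|J|_{\bbG}\leq k$, decompose a path from $0$ to $x$ into a bounded number of horizontal exponential arcs of parameter size $O(d(x))$, apply one-dimensional Taylor on each arc, and control the intermediate values $X^J v(y_{i-1})$ by induction on $k-|J|_{\bbG}$ --- is precisely the classical stratified Taylor argument of Folland--Stein, and it is sound. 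The $O(d(x)^k)$ statement for $R_{k-1}=u-P_{k-1}(u,0,\cdot)$ then follows from the weaker version where only vanishing up to order $k-1$ is assumed, together with local boundedness of $X^Ju$ for $|J|_{\bbG}=k$.

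Two remarks. First, the justification ``$|J|\leq j$ and hence $|J|_{\bbG}\leq j$'' is a non-sequitur: by \eqref{degree1} the inequality is $|J|\leq|J|_{\bbG}$, which runs the other way. The conclusion $|J|_{\bbG}\leq j$ (in fact $=j$) is still correct, but because $X_{c_i}^j$ is a sum of length-$j$ words in first-layer vector fields, and rewriting such a word in the PBW basis $X^J$ via the stratified bracket relations produces only terms of homogeneous degree exactly $j$; you should replace the stated reason. Second, your proposed alternative route via Lemma \ref{esxa} and classical Euclidean Taylor has a genuine gap: $u\in C^k_{\bbG}(\Omega)$ does not imply $u$ is Euclidean $C^k$. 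For instance, if $s\geq 2$ and $k\geq 1$, the derivative $X_N^k u$ has homogeneous degree $d_Nk>k$ and need not exist; more to the point, the Euclidean derivatives $D^\beta u$ with $|\beta|\leq k$ but $|\beta|_{\bbG}>k$ (which a $k$-th order Euclidean Taylor expansion requires) are not controlled by the $C^k_{\bbG}$ hypothesis. So the path-decomposition argument is the one to carry out, not the Euclidean reduction.
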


The following fact is \cite[Proposition 3]{PV06}.

\begin{proposition}\label{dxa}
Let $k$ be a non-negative integer and $P$ a polynomial of homogeneous degree $k$. Then 
\[P(x)=P_k(P,x_0,x) \qquad \mbox{for any }x_0\in\mathbb{G}.\]
\end{proposition}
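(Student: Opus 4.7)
The plan is to show that the polynomial $P$ itself, after reexpression in the variable $y = x_0^{-1}x$, is already a polynomial centered at $x_0$ of homogeneous degree at most $k$. Once this is established, since trivially $X^J P(x_0) = X^J P(x_0)$ for every multi-index $J$ with $|J|_{\bbG} \leq k$, the uniqueness built into the definition of $P_k(P,x_0,\cdot)$ forces $P_k(P,x_0,x) = P(x)$.

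To carry out the reexpression, I would substitute $x = x_0 y$ and expand $P(x_0 y)$ as a polynomial in $y$ with coefficients depending on $x_0$. By \eqref{exprecoord} and \eqref{defQi}, each coordinate $(x_0 y)_i$ is a polynomial in the joint variables $(x_0,y)$, so $(x_0,y) \mapsto P(x_0 y)$ is polynomial as well. The crucial observation is a joint homogeneity property: because $\delta_\lambda(x_0 y) = \delta_\lambda(x_0)\,\delta_\lambda(y)$ and $P$ is homogeneous of degree $k$,
\[
P(\delta_\lambda x_0 \cdot \delta_\lambda y) \;=\; P(\delta_\lambda(x_0 y)) \;=\; \lambda^k P(x_0 y).
\]
Writing $P(x_0 y) = \sum_I c_I(x_0)\, y^I$ and comparing coefficients of the linearly independent monomials $y^I$ in the scaling identity yields $c_I(\delta_\lambda x_0) = \lambda^{k-|I|_{\bbG}} c_I(x_0)$ for every $\lambda > 0$. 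Since each $c_I$ is a polynomial and a polynomial of negative homogeneous degree must vanish identically, $c_I \equiv 0$ whenever $|I|_{\bbG} > k$. Therefore
\[
P(x) \;=\; \sum_{|I|_{\bbG} \leq k} c_I(x_0)\,(x_0^{-1} x)^I,
\]
which, after absorbing the factorials $I!$ into the coefficients, is precisely the form of a polynomial centered at $x_0$ of homogeneous degree at most $k$. Applying uniqueness in the definition of the Taylor polynomial completes the proof.

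The main obstacle is the Baker--Campbell--Hausdorff bookkeeping that underlies the joint homogeneity step; once one accepts that the coordinates of $x_0 y$ scale correctly under the simultaneous dilation $(x_0,y) \mapsto (\delta_\lambda x_0, \delta_\lambda y)$, the remainder of the argument is just linear independence of monomials together with the uniqueness clause in the definition of $P_k(u,x_0,\cdot)$.
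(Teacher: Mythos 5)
The paper does not prove Proposition \ref{dxa}; it simply cites \cite[Proposition 3]{PV06}, so there is no in-paper argument to compare with. Your self-contained proof is correct. The compatibility $\delta_\lambda(x_0 y) = \delta_\lambda(x_0)\,\delta_\lambda(y)$ combined with $P(\delta_\lambda z) = \lambda^k P(z)$ and linear independence of the monomials $y^I$ does yield $c_I(\delta_\lambda x_0) = \lambda^{k - |I|_{\bbG}} c_I(x_0)$; since each $c_I$ is a polynomial in $x_0$, letting $\lambda \to 0^+$ forces $c_I \equiv 0$ whenever $|I|_{\bbG} > k$. Once $P$ is recognized as a polynomial centered at $x_0$ of homogeneous degree at most $k$ whose derivatives $X^J P(x_0)$ trivially agree with those of $P$, the uniqueness clause built into the definition of $P_k(u,x_0,\cdot)$ finishes the job.

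One small remark: your scaling argument is written for $P$ homogeneous of degree exactly $k$, whereas the proposition (and its use in the proof of Lemma \ref{mainthmb1}) applies to a general polynomial of homogeneous degree at most $k$. This costs nothing: decompose $P = \sum_{d \leq k} P_d$ into homogeneous components, run your argument on each $P_d$ to write $P_d(x_0 y)$ as a polynomial in $y = x_0^{-1}x$ of homogeneous degree at most $d$, and sum. Making this explicit would close the last gap.
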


The following lemma is attributed in \cite{CA64} to De Giorgi. We will prove an analogue for polynomials in Carnot groups in Lemma \ref{degiorgi_carnot}.

\begin{lemma}[De Giorgi]\label{DeGiorgi}
Let $E\subset\B{R}^n$ be a measurable subset of a ball $B(x_0,r)$ such that $\mathcal{L}^n(E)\geq Ar^n$ for some constant $A>0$. Then for each positive integer $k$ there exists a positive constant $C$, depending only on $n,k$ and $A$, such that
\[
|D^{\alpha}P(x_0)|\leq \frac{C}{r^{n+|\alpha|}}\int_E |P(y)| \dd y
\]
for all polynomials $P$ of degree at most $k$ and for all multi-indices $\alpha$.
\end{lemma}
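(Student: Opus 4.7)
The plan is to reduce to the unit-ball case and then combine finite-dimensionality of the polynomial space with a compactness argument.

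First, I would normalize by the Euclidean rescaling $y \mapsto x_0 + ry$: setting $\tilde P(y) := P(x_0 + ry)$ and $\tilde E := \{y \in \mathbb{R}^n : x_0 + ry \in E\}$, one has $\tilde P$ a polynomial of degree at most $k$, $\tilde E \subset B(0,1)$, $\mathcal{L}^n(\tilde E) \geq A$, together with
\[
D^\alpha \tilde P(0) = r^{|\alpha|} D^\alpha P(x_0), \qquad \int_{\tilde E}|\tilde P(y)|\dd y = r^{-n}\int_E |P(x)|\dd x.
\]
Hence it suffices to treat the case $x_0 = 0$, $r = 1$. Moreover, one may assume $|\alpha| \leq k$, since otherwise $D^\alpha P \equiv 0$ and the inequality is trivial.

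Let $\mathcal{P}_k$ denote the space of polynomials of degree at most $k$ on $\mathbb{R}^n$, equipped with $\|P\|_\infty := \sup_{B(0,1)} |P|$. Since $\mathcal{P}_k$ is finite-dimensional, the linear functional $P \mapsto D^\alpha P(0)$ is bounded, so $|D^\alpha P(0)| \leq C_1(n,k)\,\|P\|_\infty$. It therefore remains to prove the key uniform estimate
\[
\int_E |P|\dd y \geq c(n,k,A)\,\|P\|_\infty
\]
for every $P\in \mathcal{P}_k$ and every measurable $E\subset B(0,1)$ with $\mathcal{L}^n(E)\geq A$; combining this with the previous bound immediately yields the claim.

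I would prove this estimate by contradiction and compactness. Assume it fails. Then there exist sequences $E_j \subset B(0,1)$ with $\mathcal{L}^n(E_j)\geq A$ and $P_j \in \mathcal{P}_k$ with $\|P_j\|_\infty = 1$ but $\int_{E_j}|P_j|\to 0$. By compactness of the closed unit ball in the finite-dimensional space $(\mathcal{P}_k, \|\cdot\|_\infty)$, a subsequence converges uniformly on $B(0,1)$ to some $P\in\mathcal{P}_k$ with $\|P\|_\infty = 1$. The functions $\chi_{E_j}$ form a bounded family in $L^\infty(B(0,1))$, so by Banach--Alaoglu a further subsequence converges in the weak-$*$ topology to some $g \in L^\infty(B(0,1))$ with $0\leq g\leq 1$ and $\int_{B(0,1)} g \dd y\geq A$. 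Combining uniform convergence of $|P_j|$ with weak-$*$ convergence of $\chi_{E_j}$ yields
\[
\int_{B(0,1)} g(y)\,|P(y)|\dd y \;=\; \lim_{j\to\infty}\int_{E_j} |P_j|\dd y \;=\; 0.
\]
Thus $|P|$ vanishes on the set $\{g>0\}$, which has positive Lebesgue measure, forcing $P\equiv 0$ (a nonzero polynomial on $\mathbb{R}^n$ cannot vanish on a set of positive measure). This contradicts $\|P\|_\infty=1$.

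The main obstacle I anticipate is organizing this compactness step so that the resulting constant depends only on $n$, $k$, $A$ and not on the particular $E$. The delicate point is the joint passage to the limit in $\int \chi_{E_j}|P_j|$: weak-$*$ convergence of $\chi_{E_j}$ alone is not enough to pass through a product, but here it is paired with the strong (uniform) convergence of $P_j$ which is free from the finite-dimensionality of $\mathcal{P}_k$.
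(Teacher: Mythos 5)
Your proof is correct. Note, however, that the paper itself does not prove Lemma~\ref{DeGiorgi}: it simply cites it to Campanato \cite{CA64}, who attributes it to De Giorgi, and then proves the Carnot-group analogue (Lemma~\ref{degiorgi_carnot}) by reducing to this Euclidean statement via Lemma~\ref{esxa} and a rescaling. So there is no in-paper argument to compare against. Your reduction to $x_0=0$, $r=1$ is the same normalization step the paper performs for the Carnot version, and after that your key observation -- that it suffices to prove the uniform lower bound $\int_E|P|\dd y \geq c(n,k,A)\|P\|_{L^\infty(B(0,1))}$ over all admissible $E$ and $P\in\mathcal{P}_k$, via compactness of the unit sphere of the finite-dimensional space $\mathcal{P}_k$ paired with weak-$*$ compactness of $\{\chi_{E_j}\}$ in $L^\infty$ -- is a clean, self-contained way to establish the lemma. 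The one delicate point, passing to the limit in $\int\chi_{E_j}|P_j|$, you handle correctly by splitting into a strong-$\times$-weak-$*$ pairing, and the conclusion $P\equiv 0$ from vanishing on a set of positive measure is legitimate because the zero set of a nonzero polynomial is Lebesgue-null. The resulting constant depends only on $n$, $k$, $A$ as required (and, since $|\alpha|\leq k$ contributes only finitely many functionals $D^\alpha(\cdot)(0)$, one can take a maximum over $\alpha$).
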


\subsection{Approximate Derivatives and Approximate Taylor Polynomials}

We say that a measurable set $D\subset \bbG$ has density one at a point $x\in \bbG$ if
\[\lim_{r\to 0^{+}} \frac{\mathcal{L}^{N}(B(x,r)\cap D)}{\mathcal{L}^{N}(B(x,r))}=1.\]
We will also say $x$ is a density point of $D$ to mean the same thing. We say that $D$ has density zero at $x\in \bbG$ if $\bbG\setminus D$ has density one at $x$. Almost every point $x\in D$ of a measurable set $D\subset \bbG$ is a density point of $D$. This follows from the fact that $(\bbG, d, \mathcal{L}^{N})$ is a doubling metric measure space and the Lebesgue differentiation theorem holds in all such spaces.

Let $f\colon D \to \bbR$ be a measurable function. We write $\aplim_{y\to x} f(y)=l$ if the set $\{y\in D: |f(y)-l|\leq \varepsilon\}$ has density one at $x$ for any $\varepsilon>0$. We say that $f$ is approximately continuous at a point $x\in D$ if $\aplim_{y\to x}f(y)=f(x)$. It follows from the previous paragraph that every measurable function is approximately continuous at almost every point in its domain.

We denote by $\aplimsup_{y\to x} f(y)$ the infimum of all $\lambda\in\mathbb{R}$ such that the set $\{y\in D: f(y)>\lambda\}$ has density zero at $x$.

\begin{definition}\label{apTaylorapDiif}
Let $D\subset \bbG$ be measurable, $f\colon D\to \bbR$ be measurable, and $x_{0}\in D$ be a point at which $D$ has density one.

We say that $f$ has an \emph{approximate $(k-1)$-Taylor polynomial at $x_0$} for some positive integer $k$ if there is a polynomial $p(x_0,x)$ centred at $x_0$ of homogeneous degree at most $(k-1)$ with
\begin{equation}\label{approxTaylor}
\aplimsup_{x\to x_0}\frac{|f(x)-p(x_0,x)|}{d(x,x_0)^k}<\infty.
\end{equation}

We say that $f$ is \emph{approximately differentiable of order $k$ at $x_0$} for some non-negative integer $k$ if there is a polynomial $p(x_0,x)$ centred at $x_0$ of homogeneous degree at most $k$ with
\begin{equation}\label{apprxDiff}
\aplim_{x\to x_0}\frac{|f(x)-p(x_0,x)|}{d(x,x_0)^k}=0.
\end{equation}
\end{definition}

In Proposition \ref{structure_of_p}  we will verify that the approximate $(k-1)$-Taylor polynomial and approximate derivative of order $k$ are unique at any point where they exist.

\begin{lemma}\label{approxtaylor}
Let $D\subset \bbG$ be measurable, $f\colon D\to \bbR$ be measurable, and $x_{0}\in D$ be a point of density of $D$. If $f$ has an approximate $(k-1)$-Taylor polynomial at $x_0$, then it is approximately differentiable of order $(k-1)$ at $x_0$ with the same approximate Taylor polynomial.
\end{lemma}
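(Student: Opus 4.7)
The plan is to deduce the $\operatorname{aplim}$ statement from the $\operatorname{aplimsup}$ hypothesis by absorbing one power of $d(x,x_0)$, which produces a factor tending to $0$. Concretely, from the hypothesis $\aplimsup_{x\to x_0}\frac{|f(x)-p(x_0,x)|}{d(x,x_0)^k}<\infty$, the definition of $\aplimsup$ (applied with $\lambda$ equal to any number strictly larger than this $\aplimsup$) yields a finite $M>0$ and a set
\[
A\defi\Bigl\{x\in D:\tfrac{|f(x)-p(x_0,x)|}{d(x,x_0)^k}\leq M\Bigr\}
\]
such that $D\setminus A$ has density zero at $x_0$. Combined with the assumption that $D$ itself has density one at $x_0$, this gives that $A$ has density one at $x_0$ in $\bbG$.

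The polynomial $p(x_0,x)$ already has homogeneous degree at most $k-1$, which is precisely the degree permitted for an approximate derivative of order $k-1$. So to verify \eqref{apprxDiff} with $k$ replaced by $k-1$, fix $\varepsilon>0$ and set $r_\varepsilon \defi \varepsilon/M$. For $x\in A\cap B(x_0,r_\varepsilon)$ we have
\[
\frac{|f(x)-p(x_0,x)|}{d(x,x_0)^{k-1}}=\frac{|f(x)-p(x_0,x)|}{d(x,x_0)^{k}}\cdot d(x,x_0)\leq M\cdot r_\varepsilon=\varepsilon.
\]
Therefore
\[
\Bigl\{x\in D:\tfrac{|f(x)-p(x_0,x)|}{d(x,x_0)^{k-1}}>\varepsilon\Bigr\}\cap B(x_0,r_\varepsilon)\subseteq (D\setminus A)\cap B(x_0,r_\varepsilon).
\]
Since density at a point depends only on behaviour in arbitrarily small balls around $x_0$, and since $D\setminus A$ has density zero at $x_0$, the set on the left-hand side has density zero at $x_0$. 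As $\varepsilon>0$ was arbitrary, \eqref{apprxDiff} holds at $x_0$ with the polynomial $p(x_0,x)$.

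No serious obstacle is expected: the argument is a one-line $\aplimsup$-to-$\aplim$ step and relies only on the definitions from this subsection together with the density-one assumption on $D$ at $x_0$. Uniqueness of the polynomial $p(x_0,x)$ as the approximate $(k-1)$-derivative is not needed here, and in any case will be addressed by Proposition~\ref{structure_of_p}.
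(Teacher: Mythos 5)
Your argument is correct and is essentially a fully spelled-out version of the paper's one-line proof, which also absorbs one factor of $d(x,x_0)$ to convert the bounded $\aplimsup$ into an $\aplim$ equal to zero. Nothing to add.
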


\begin{proof}
This follows from
\begin{equation*}
\aplimsup_{x\to x_0}\frac{|f(x)-p(x_0,x)|}{d(x,x_0)^{k-1}}=\aplimsup_{x\to x_0}\frac{|f(x)-p(x_0,x)|}{d(x,x_0)^k}d(x,x_0)=0.
\end{equation*}
\end{proof}

\subsection{Whitney Extension and Lusin Approximation}

Let $\Omega\subset \bbG$ be an open set, $u\in C^k_{\mathbb{G}}(\Omega)$, and $x_{0}\in \bbG$. The Taylor polynomial of $u$ can be written as
\begin{equation}\label{Pk}
P_k(u,x_0,x):=\sum_{|J|_{\mathbb{G}}\leq k} \alpha_J(x_0)\frac{(x_0^{-1}x)^J}{J!}
\end{equation}
for some coefficients $\alpha_J(x_0)$. By \cite[equation (2.9) on page 603]{PV06}, there exist constants $\beta_{JK}$, independent of $u$ and $x_{0}$, such that for $|J|_{\bbG}\leq k$,
\begin{align}\label{def_a_J}
\alpha_J(x_0)&=(X^J u)(x_0)+\sum_{\substack{   |K|_{\mathbb{G}}=|J|_{\mathbb{G}} \\ |K|<|J|}} \beta_{JK} (X^K u)(x_0) \\
\nonumber
&=\sum_{\substack{   |K|_{\mathbb{G}}=|J|_{\mathbb{G}} \\ |K|\leq |J|}} \beta_{JK} (X^K u)(x_0).
\end{align}
The  $\beta_{JK}$ satisfy, among other properties, $\beta_{II}=1$ and $\beta_{IK}=0$ if $|K|_{\mathbb{G}}=|I|_{\mathbb{G}}$ with $|K|=|I|$ and $K\neq I$. 

Suppose $u\colon \Omega\to \bbR$ is any function for which $X^{J}f(x)$ exist for all $|J|_{\bbG}\leq k$. Then we will also denote by $P_{k}(u,x_{0},x)$ the polynomial in \eqref{Pk} with coefficients given by \eqref{def_a_J}. If $u\in C^{k}_{\bbG}(\Omega)$, this agrees with the Taylor polynomial given earlier.

%

Suppose $\{f_J\}_{|J|_{\mathbb{G}}\leq k}$ is a collection of real-valued functions on a closed set $D\subset \bbG$. Then for every $x_{0}\in D$ we denote by $P_{k}(\{f_J\}_{|J|_{\mathbb{G}}\leq k} ,x_{0},x)$ the polynomial in \eqref{Pk} with coefficients given by \eqref{def_a_J} with $X^{K}u(x_{0})$ replaced by $f_{K}(x_{0})$. 

The following is the extension to Carnot groups of the classical Whitney extension theorem \cite[Theorem 4]{PV06}. It will be used in the proof of Theorem \ref{mainthma}.

\begin{theorem}\label{genclassicalWhitney}
Let $k\in\B{N}$ and $\{f_{J}\}_{|J|_{\bbG}\leq k}$ be a collection of functions on a closed set $F\subset \bbG$ satisfying the following conditions:
\begin{itemize}
\item $|f_{J}(x)|\leq M$ for $|J|_{\bbG}\leq k$ on every compact subset of $F$.
\item $R_J(x,y):=f_J(y)-X^J (P_{k}(\{f_J\}_{|J|_{\mathbb{G}}\leq k} ,x,y))$ is $o(d(x,y)^{k-|J|_{\bbG}})$ for every $|J|_{\bbG}\leq k$ in the following sense. For every $\varepsilon>0$ and $\bar{x}\in F$, there is $\delta=\delta(\varepsilon,\bar{x})>0$ such that
\[|R_{J}(x,y)|\leq \varepsilon d(x,y)^{k-|J|_{\bbG}}\]
for all $x,y\in F$ satisfying $d(\bar{x},x)<\delta$ and $d(\bar{x},y)<\delta$ and all $|J|_{\bbG}\leq k$. 
\end{itemize}
Then there exists $f\in C^{k}_{\bbG}(\bbG)$ such that $X^{J}f|_{F}=f_{J}$ for all $|J|_{\bbG}\leq k$.
\end{theorem}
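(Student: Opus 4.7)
The plan is to follow the classical Whitney extension strategy, adapted to the Carnot setting by replacing Euclidean cubes with balls in the CC distance and using the fact that horizontal derivatives of polynomials behave well with respect to the homogeneous grading (Lemma \ref{esxa}, Lemma \ref{polyderiv}). First I would perform a Whitney-type decomposition of the open set $\bbG \setminus F$ into a locally finite family of CC balls $\{B_i\} = \{B(q_i, r_i)\}$ such that $r_i$ is comparable to $d(q_i, F)$ and the dilated balls $\{2 B_i\}$ have bounded overlap; this is standard in doubling metric spaces. I would construct a smooth partition of unity $\{\varphi_i\}$ with $\mathrm{supp}(\varphi_i) \subset 2 B_i$, $\sum_i \varphi_i \equiv 1$ on $\bbG \setminus F$, and derivative bounds $|X^J \varphi_i(x)| \leq C_{|J|_{\bbG}} r_i^{-|J|_{\bbG}}$ for all $|J|_{\bbG} \leq k$. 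For each $i$ pick a projection point $p_i \in F$ with $d(p_i, q_i) \leq C r_i$, and define
\begin{equation*}
f(x) := \begin{cases} f_0(x), & x \in F, \\ \displaystyle \sum_i \varphi_i(x) \, P_k\bigl(\{f_J\}, p_i, x\bigr), & x \in \bbG \setminus F. \end{cases}
\end{equation*}

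Smoothness of $f$ on $\bbG \setminus F$ is immediate from local finiteness, and $X^J f$ can be computed there by the Leibniz rule together with Lemma \ref{esxa} for converting between $X^J$ and the Euclidean partials. The main task is then to show that each $X^J f$ extends continuously to all of $\bbG$ with $X^J f|_F = f_J$; equivalently, for every $\bar{x} \in F$ and every $|J|_{\bbG} \leq k$,
\begin{equation*}
\lim_{y \to \bar{x},\, y \in \bbG \setminus F} X^J f(y) = f_J(\bar{x}).
\end{equation*}
Using the identity $\sum_i \varphi_i \equiv 1$ near $y$, which implies $\sum_i X^{J_1}\varphi_i \equiv 0$ for every $|J_1|\geq 1$, one can subtract $P_k(\{f_J\}, \bar{x}, \cdot)$ inside each summand and obtain an expansion
\begin{equation*}
X^J f(y) - X^J P_k(\{f_J\}, \bar{x}, \cdot)(y) = \sum_i \sum_{J_1 + J_2 = J} c_{J_1 J_2} (X^{J_1}\varphi_i)(y) \, \bigl[ X^{J_2} P_k(\{f_J\}, p_i, \cdot)(y) - X^{J_2} P_k(\{f_J\}, \bar{x}, \cdot)(y) \bigr],
\end{equation*}
where the summands with $|J_1| = 0$ are collected into the single leftover term. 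The term $X^J P_k(\{f_J\}, \bar{x}, \cdot)(y)$ tends to $f_J(\bar{x})$ as $y \to \bar{x}$ by direct computation from the coefficient formula \eqref{def_a_J}.

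The main obstacle is controlling the bracketed differences. The key estimate is that, for $p_i$ and $\bar{x}$ both in $F$ and close together,
\begin{equation*}
\bigl| X^{J_2} P_k(\{f_J\}, p_i, \cdot)(y) - X^{J_2} P_k(\{f_J\}, \bar{x}, \cdot)(y) \bigr| = o\bigl(d(p_i, \bar{x})^{k-|J_2|_{\bbG}}\bigr),
\end{equation*}
uniformly for $y$ in a CC-neighborhood of $\bar{x}$ of diameter $\simeq d(p_i,\bar{x})$. This is proved by expanding both Taylor polynomials via \eqref{Pk}--\eqref{def_a_J} and recombining the difference of coefficients $\alpha_J(p_i) - \alpha_J(\bar{x})$ as a linear combination of remainder quantities $R_J(p_i, \bar{x})$, which by hypothesis are $o(d(p_i,\bar{x})^{k-|J|_{\bbG}})$. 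Combining with the geometric fact that $d(p_i, \bar{x}) \lesssim d(y,\bar{x})$ whenever $y \in \mathrm{supp}(\varphi_i)$ is close to $\bar{x}$, and with the derivative bounds $|X^{J_1}\varphi_i(y)| \lesssim r_i^{-|J_1|_{\bbG}} \simeq d(y,\bar{x})^{-|J_1|_{\bbG}}$, each summand is bounded by $o(d(y,\bar{x})^{k-|J|_{\bbG}})$. Summing over the bounded-overlap family $\{B_i\}$ yields the desired convergence as $y \to \bar{x}$.

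The hard part of the argument is this coherence estimate for Taylor polynomials at different base points, because in the Carnot setting the $X^J$ do not commute and their action on polynomials centered at arbitrary points involves the non-trivial group law \eqref{exprecoord}. The bookkeeping is handled by Lemma \ref{esxa} together with Lemma \ref{polyderiv} to track the drop in homogeneous degree under differentiation, ensuring that the cancellations encoded in the hypothesis $R_J = o(d^{k-|J|_{\bbG}})$ translate into the correct vanishing rates in the CC metric. Once this is in place, standard Whitney bookkeeping delivers the conclusion $f \in C^k_{\bbG}(\bbG)$ with $X^J f|_F = f_J$.
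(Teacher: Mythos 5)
The paper does not prove Theorem~\ref{genclassicalWhitney}; it is cited verbatim as Theorem~4 of Pupyshev--Vodop'yanov~\cite{PV06}. Your sketch reproduces the standard Whitney extension construction adapted to Carnot groups---a Whitney decomposition of $\bbG\setminus F$ into CC balls, a homogeneous partition of unity with $|X^J\varphi_i|\lesssim r_i^{-|J|_{\bbG}}$, the weighted sum of Taylor polynomials at projection points, and the coherence estimate for jets at nearby base points obtained by re-centering via Proposition~\ref{dxa} and expressing coefficient differences through the remainders $R_K$---which is precisely the argument carried out in that reference, so your proposal matches the cited source's proof.
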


We now recall some definitions that will be useful to state the Lipschitz version of the Whitney extension theorem.

\begin{definition}\label{lipG}
Fix $k\in\mathbb{N}$, $k<\gamma\leq k+1$. A function $f\colon \bbG\to \bbR$ belongs to $\lip(\gamma,\bbG)$ if $X^{J}f(x)$ exists for all $x\in \bbG$ and $|J|_{\bbG}\leq k$, and there is a constant $M$ such that  for all $|J|_{\mathbb{G}}\leq k$,
\begin{equation}\label{defLipG}
|X^{J}f(x_0)|\leq M,\qquad |R_J(x_0,x)|\leq Md(x,x_0)^{\gamma-|J|_{\mathbb{G}}}\ \mbox{for all}\ x,x_0\in \bbG,
\end{equation}
where 
\[
R_J(x_0,x):=(X^{J}f)(x)-X^J(P_{k}(f,x_0,x))
\]
\end{definition}

\begin{definition}\label{lipD}
Fix $k\in\mathbb{N}$, $k<\gamma\leq k+1$, and let $D\subset \mathbb{G}$ be a closed set. A collection $\{f_J\}_{|J|_{\mathbb{G}}\leq k}$ of real-valued functions on $D$ belongs to $\lip(\gamma,D)$ if there is a constant $M$ such that  for all multi-indices $|J|_{\mathbb{G}}\leq k$
\begin{equation}\label{defLipD}
|f_J(x_0)|\leq M,\qquad |R_J(x_0,x)|\leq Md(x,x_0)^{\gamma-|J|_{\mathbb{G}}}\ \mbox{for all}\ x,x_0\in D,
\end{equation}
where 
\[
R_J(x_0,x):=f_J(x)-X^J (P_{k}(\{f_J\}_{|J|_{\mathbb{G}}\leq k} ,x_{0},x)).
\]
\end{definition}

We will make use of the space $\lip(k,D)$ and $\lip(k,\bbG)$. Notice that these are obtained from the previous definitions by replacing $k$ by $k-1$ then $\gamma$ by $k$.

In \cite[Remark 1]{PV06} the authors observe that if $D=\mathbb{G}$, then the two definitions of $\lip(\gamma,D)$ are consistent. More precisely, the following lemma holds.

\begin{lemma}\label{LipEquiv}
Fix $k\in\mathbb{N}$ and let $f:\mathbb{G}\to \mathbb{R}$ be such that there is family $\{f_J\}_{|J|_{\mathbb{G}}\leq k}$ of real-valued functions on $\mathbb{G}$ with $f_J = f$ when $|J|=0$ and $\{f_J\}_{|J|_{\mathbb{G}}\leq k} \in\lip(\gamma,\B{G})$ for all $|J|_{\B{G}}\leq k$. Then for $|J|_{\mathbb{G}}\leq k$, $f$ has continuous derivatives $X^Jf$ satisfying $f_J=X^J f$ and \eqref{defLipG} holds for some, possibly different, constant $M$.
\end{lemma}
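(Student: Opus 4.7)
The plan is to recognize Lemma \ref{LipEquiv} as a direct consequence of the Whitney extension theorem (Theorem \ref{genclassicalWhitney}) applied with the closed set $F = \mathbb{G}$ itself, since the family $\{f_J\}_{|J|_{\mathbb{G}} \leq k}$ is already defined on all of $\mathbb{G}$. No curve-by-curve differentiation, no induction on the multi-index order, and no appeal to Lemma \ref{esxa} are required: the substantive analytic work is already packaged in \cite{PV06}.

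First I would check the two hypotheses of Theorem \ref{genclassicalWhitney} for $\{f_J\}$ on $F = \mathbb{G}$. The global bound $|f_J(x_0)| \leq M$ from Definition \ref{lipD} trivially yields boundedness on compact subsets of $\mathbb{G}$. For the remainder condition, Definition \ref{lipD} provides
\[
|R_J(x_0, x)| \leq M d(x, x_0)^{\gamma - |J|_{\mathbb{G}}} = M d(x, x_0)^{\gamma - k} \cdot d(x, x_0)^{k - |J|_{\mathbb{G}}}.
\]
Because $\gamma > k$, for any $\varepsilon > 0$ and $\bar x \in \mathbb{G}$ we may choose $\delta = \delta(\varepsilon) > 0$ so small that $M (2\delta)^{\gamma - k} \leq \varepsilon$; then for $x, y \in B(\bar x, \delta)$ the triangle inequality yields $d(x, y) < 2\delta$, and hence $|R_J(x, y)| \leq \varepsilon d(x, y)^{k - |J|_{\mathbb{G}}}$, which is precisely the $o(d(x,y)^{k - |J|_{\mathbb{G}}})$ hypothesis required.

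Second, I would invoke Theorem \ref{genclassicalWhitney} to obtain a function $\tilde f \in C^k_{\mathbb{G}}(\mathbb{G})$ with $X^J \tilde f|_{\mathbb{G}} = f_J$ for every $|J|_{\mathbb{G}} \leq k$. The case $J = 0$ yields $\tilde f = f_0 = f$, so $f$ itself belongs to $C^k_{\mathbb{G}}(\mathbb{G})$ and $X^J f = f_J$ throughout $\mathbb{G}$ for all $|J|_{\mathbb{G}} \leq k$.

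Finally I would transfer the bounds to establish \eqref{defLipG}. Since $X^J f = f_J$, the Taylor polynomial $P_k(f, x_0, x)$ of Definition \ref{lipG} coincides with $P_k(\{f_J\}, x_0, x)$ of Definition \ref{lipD}, as both are given by \eqref{Pk}--\eqref{def_a_J} with the same coefficients $\alpha_J(x_0)$. Consequently the remainders defined in the two settings are the same function of $(x_0, x)$, and the bounds from Definition \ref{lipD} immediately yield \eqref{defLipG} with the very same constant $M$. The only point deserving minor care is this matching of Taylor-polynomial coefficients, which is immediate once $X^J f = f_J$ has been established; the main (trivial) obstacle is simply the bookkeeping verifying that $\gamma > k$ gives enough room in the remainder estimate to fit into the $o$ condition of Theorem \ref{genclassicalWhitney}.
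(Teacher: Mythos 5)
Your argument is correct and self-contained. The paper itself does not prove Lemma~\ref{LipEquiv}; it simply records it as coming from \cite[Remark 1]{PV06}, so there is no internal proof to compare against, and your derivation is therefore a genuine alternative. You bootstrap the consistency claim off the already-available $C^k$ Whitney extension theorem (Theorem~\ref{genclassicalWhitney}) applied with the extreme choice $F=\mathbb{G}$, and the three steps are sound: (i) the global bound $|f_J|\le M$ from Definition~\ref{lipD} trivially gives boundedness on compacts, and the factorization
\[
M\,d(x,y)^{\gamma-|J|_{\bbG}} \;=\; M\,d(x,y)^{\gamma-k}\,d(x,y)^{k-|J|_{\bbG}},
\]
together with $\gamma>k$, yields the local $o\bigl(d(x,y)^{k-|J|_{\bbG}}\bigr)$ condition of Theorem~\ref{genclassicalWhitney}; (ii) the extension $\tilde f$ satisfies $\tilde f = X^{\underline 0}\tilde f = f_0 = f$, so $f$ itself lies in $C^k_{\bbG}(\bbG)$ with $X^J f=f_J$; (iii) once $X^J f=f_J$ is known, the coefficients \eqref{def_a_J} computed from $\{f_J\}$ and from $(X^K f)$ agree, hence $P_k(f,x_0,x)=P_k(\{f_J\},x_0,x)$, the two remainders $R_J$ are literally the same function, and \eqref{defLipG} holds with the \emph{same} constant $M$ --- which is slightly stronger than the lemma's ``possibly different constant.'' There is no circularity in leaning on Theorem~\ref{genclassicalWhitney} here, since that theorem is established independently and Lemma~\ref{LipEquiv} is only invoked downstream in the Lipschitz branch (Theorem~\ref{mainthmb}, Lemma~\ref{mainthmb3}). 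The trade-off relative to a direct argument à la \cite{PV06} is that you rely on the full strength of the $C^k$ extension theorem, but since that result is already standing in the paper, this is economy rather than waste.
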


We now state the Lipschitz version of the Whitney extension theorem in Carnot groups \cite[Theorem 2 and Lemma 6]{PV06}. It will be used in the proof of Theorem \ref{mainthmb}.

\begin{theorem}\label{extension}
Let $k\in\mathbb{N}$, $k<\gamma\leq k+1$, and let $D\subset \mathbb{G}$ be a closed set. Let $\{f_J\}|_{J|_{\mathbb{G}}\leq k} \in \lip(\gamma,D)$. Then there exists a function $f\in  \lip(\gamma,\bbG)$ such that $X^{J}f(x)=f_{J}(x)$ for all $x\in D$ for all $|J|_{\bbG} \leq k$.
\end{theorem}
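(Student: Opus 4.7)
The plan is to carry out the classical Whitney extension construction in the CC metric. First I would build a Whitney decomposition of the open set $\bbG\setminus D$: since $(\bbG, d, \C{L}^{N})$ is doubling, there is a countable collection of CC balls $\{B_{i}=B(x_{i},r_{i})\}$ covering $\bbG\setminus D$ with $r_{i}$ comparable to $d(x_{i},D)$, whose fixed enlargements $B_{i}^{*}$ remain in $\bbG\setminus D$ with bounded overlap. Combining dilations on $\bbG$ with Lemma \ref{esxa} to convert between $D^{J}$ and $X^{J}$ estimates, I would construct a smooth partition of unity $\{\varphi_{i}\}$ subordinate to $\{B_{i}^{*}\}$ with $\sum_{i}\varphi_{i}=1$ on $\bbG\setminus D$ and $|X^{J}\varphi_{i}|\leq C r_{i}^{-|J|_{\bbG}}$ for every $|J|_{\bbG}\leq k$. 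For each $i$ pick $p_{i}\in D$ with $d(p_{i},x_{i})\leq 2 d(x_{i},D)$, and define
\[
f(x):=\begin{cases} f_{\mathbf{0}}(x), & x\in D,\\ \displaystyle\sum_{i}\varphi_{i}(x) P_{k}(\{f_{L}\},p_{i},x), & x\in\bbG\setminus D,\end{cases}
\]
where $\mathbf{0}$ denotes the zero multi-index. On $\bbG\setminus D$ the sum is locally finite, so $f\in C^{\infty}$ there and $X^{J}f$ is computed by the Leibniz rule.

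The crucial analytic step is a change-of-base-point estimate for the Taylor polynomials: for $p,q\in D$, $|J|_{\bbG}\leq k$, and $x$ in a bounded neighbourhood of $p$,
\[
|X^{J}P_{k}(\{f_{L}\},p,x)-X^{J}P_{k}(\{f_{L}\},q,x)|\leq CM\, d(p,q)^{\gamma-|J|_{\bbG}}.
\]
This is obtained by expanding both polynomials via the explicit coefficient formula \eqref{def_a_J} and applying Lemma \ref{polyderiv}; the difference reorganizes into a linear combination of remainders $R_{L}(q,p)$ with $|L|_{\bbG}\leq k$, each controlled by the hypothesis $\{f_{L}\}\in\lip(\gamma,D)$. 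To verify the Lipschitz condition for $f$, take $x_{0}\in D$ and $x\in\bbG\setminus D$; expand $X^{J}f(x)-X^{J}P_{k}(\{f_{L}\},x_{0},x)$ via the Leibniz rule and use $\sum_{i}\varphi_{i}(x)=1$ and $\sum_{i}X^{\alpha}\varphi_{i}(x)=0$ for $|\alpha|>0$ to write it as
\[
\sum_{i}\sum_{\alpha+\beta=J} c_{\alpha\beta}\, X^{\alpha}\varphi_{i}(x)\bigl[X^{\beta}P_{k}(\{f_{L}\},p_{i},x)-X^{\beta}P_{k}(\{f_{L}\},x_{0},x)\bigr].
\]
Using $|X^{\alpha}\varphi_{i}|\leq C r_{i}^{-|\alpha|_{\bbG}}$, the bracket estimate above, and $d(p_{i},x_{0})\leq C d(x,x_{0})$, $r_{i}\simeq d(x,D)\leq d(x,x_{0})$ on the support, the exponents combine to $\gamma-|J|_{\bbG}$, and bounded overlap controls the sum. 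Letting $x\to x_{0}$ gives $X^{J}f(x_{0})=f_{J}(x_{0})$, hence by Lemma \ref{LipEquiv} and uniform boundedness one obtains $f\in\lip(\gamma,\bbG)$. The regime $x,x_{0}\in\bbG\setminus D$ splits into two subcases: if $d(x,x_{0})$ is comparable to $d(x,D)$, insert an auxiliary point of $D$ near $x$ and reduce to the previous case; if $d(x,x_{0})\ll d(x,D)$, then $x$ and $x_{0}$ share a common enlarged Whitney ball and the estimate follows from direct smoothness bounds inside that ball.

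The main technical obstacle is the change-of-base-point estimate. In the Euclidean setting $D^{\beta}P_{k}(f,p,\cdot)=P_{k-|\beta|}(D^{\beta}f,p,\cdot)$ is nearly tautological, but here the coefficients of $P_{k}$ involve the combinatorial constants $\beta_{JK}$ produced by the Baker-Campbell-Hausdorff formula, and the left-invariant derivatives $X^{J}$ do not interact transparently with evaluation at a shifted base point because of non-commutativity. Verifying rigorously that the difference of the two Taylor polynomials reorganizes into a bounded combination of the remainders $R_{L}(p,q)$ with the right orders to match the exponent $\gamma-|J|_{\bbG}$ after the partition-of-unity factors $r_{i}^{-|\alpha|_{\bbG}}$ are included is the delicate Carnot-group specific bookkeeping that distinguishes the argument from its Euclidean prototype.
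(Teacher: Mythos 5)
This statement is quoted verbatim in the paper from \cite[Theorem~2 and Lemma~6]{PV06}; it is used as a black box and no proof is given, so there is no internal proof to compare against. Your outline follows the natural Carnot analogue of the classical Whitney construction (Whitney decomposition of $\bbG\setminus D$ in the CC metric, regularized partition of unity with $|X^J\varphi_i|\lesssim r_i^{-|J|_{\bbG}}$, gluing local Taylor polynomials, verifying the Lipschitz estimate by Leibniz expansion), and the scaffolding is essentially sound.

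There is, however, a concrete gap in the exponent bookkeeping. Your change-of-base-point estimate suppresses the dependence on the evaluation point: via the identity $X^{L}\bigl[P_k(\{f_L\},p,\cdot)-P_k(\{f_L\},q,\cdot)\bigr](p)=R_L(q,p)$ together with Proposition~\ref{dxa} and \eqref{def_a_J}, the correct scale-invariant form is
\[
\bigl|X^{\beta}\bigl[P_k(\{f_L\},p,\cdot)-P_k(\{f_L\},q,\cdot)\bigr](x)\bigr|\leq CM\sum_{|L|_{\bbG}\leq k-|\beta|_{\bbG}} d(p,q)^{\gamma-|\beta|_{\bbG}-|L|_{\bbG}}\,d(p,x)^{|L|_{\bbG}},
\]
with the extra factors $d(p,x)^{|L|_{\bbG}}$. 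Taking $p=p_i$, $q=x_0$, $d(p_i,x)\simeq r_i$, and multiplying by $|X^{\alpha}\varphi_i|\lesssim r_i^{-|\alpha|_{\bbG}}$ yields terms $d(x,x_0)^{\gamma-|\beta|_{\bbG}-|L|_{\bbG}}\,r_i^{|L|_{\bbG}-|\alpha|_{\bbG}}$. For $|L|_{\bbG}<|\alpha|_{\bbG}$ the power of $r_i$ is negative, and since $r_i\lesssim d(x,x_0)$ (not the reverse), one \emph{cannot} bound $r_i^{-|\alpha|_{\bbG}}$ by $d(x,x_0)^{-|\alpha|_{\bbG}}$: that inequality runs the wrong way. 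This is what is concealed by the phrase ``the exponents combine to $\gamma-|J|_{\bbG}$.'' The standard repair, needed already in the Euclidean Whitney proof, is to bifurcate: for $\alpha\neq\underline{0}$, use $\sum_i X^{\alpha}\varphi_i\equiv 0$ to subtract $X^{\beta}P_k(\{f_L\},p_{i_0},x)$ rather than $X^{\beta}P_k(\{f_L\},x_0,x)$, where $p_{i_0}\in D$ is a nearest point of $D$ to $x$, so that $d(p_i,p_{i_0})\lesssim r_i$ and $d(p_i,x)\lesssim r_i$ and every factor scales like $r_i$, giving $r_i^{\gamma-|J|_{\bbG}}\lesssim d(x,x_0)^{\gamma-|J|_{\bbG}}$; only the $\alpha=\underline{0}$ term is compared directly with $P_k(\{f_L\},x_0,\cdot)$. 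With this bifurcation, and the commutator bookkeeping you already flag (rewriting $X^K X^\beta$ as a combination of $X^M$ with $|M|_{\bbG}=|K|_{\bbG}+|\beta|_{\bbG}$ so that the remainder bounds $|R_M(q,p)|\leq M d(p,q)^{\gamma-|M|_{\bbG}}$ apply), your construction should close.
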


We now define the Lusin approximation property we will study.

\begin{definition}
Let $D\subset \mathbb{G}$ and $f\colon D\to \bbR$ be measurable. Let $k\in\mathbb{N}$. We say that $f$ has the \emph{Lusin property of order $k$} or \emph{admits a Lusin approximation by functions in $C^{k}_{\bbG}(\bbG)$} if for every $\varepsilon>0$ there exists $u\in C^k_{\mathbb{G}}(\mathbb{G})$ such that
\[
\mathcal{L}^N \{x\in D:u(x)\neq f(x)\}<\varepsilon.
\]
If a similar statement holds with $C^k_{\mathbb{G}}(\mathbb{G})$ replaced by $\lip(k,\bbG)$, then we say $f$ \emph{admits a Lusin approximation by functions in $\lip(k,\bbG)$}.
\end{definition}

\section{Polynomials and the De Giorgi Lemma in Carnot Groups}\label{DiGiorgi}

In this section we show that the coefficients of approximate derivatives and approximate Taylor polynomials at a point are unique, prove a version of the De Giorgi lemma in Carnot groups, and prove a simple estimate for the measure of intersection of balls.

\subsection{Uniqueness of Approximate Taylor Polynomials and Approximate Derivatives} 

We first prove a simple lemma, showing how a point of density can be approached from many directions within the corresponding set. The distance of a point $x\in \bbG$ from a set $A\subset \bbG$ will be denoted by $d(x,A)=\inf \{d(x,y): y\in A\}$. Recall that $V$ is the measure of the unit ball with respect to the CC metric.

\begin{lemma}\label{densityclaim}
Fix $v\in \bbG$ with $d(v)=1$ and $0<\theta<1$. Let $\mathcal{N}(v)=\{\delta_{t}v:t>0\}$ and define the set
\[\mathcal{C}(v,\theta)=\{w\in \bbG: d(w,\mathcal{N}(v))<\theta d(w)\}.\]
Then $\mathcal{C}(v,\theta)$ has positive lower density at $0$, i.e. 
\[\liminf_{R \downarrow 0} \frac{\mathcal{L}^{N}(B(0,R)\cap \mathcal{C}(v,\theta))}{R^Q} >0.\]
Consequently, if $A\subset \bbG$ is a measurable set with density one at $0$, then we have $B(0,R)\cap \mathcal{C}(v,\theta)\cap A\neq \varnothing$ for all sufficiently small $R>0$.
\end{lemma}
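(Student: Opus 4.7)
The plan is to exploit the invariance of $\mathcal{C}(v,\theta)$ under the dilations $\delta_t$ to reduce the lower density claim to the single statement that $B(0,1)\cap\mathcal{C}(v,\theta)$ has positive Lebesgue measure.

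First I would verify that $\delta_t \mathcal{C}(v,\theta) = \mathcal{C}(v,\theta)$ for every $t>0$. The ray $\mathcal{N}(v)$ is itself dilation-invariant since $\delta_t(\delta_s v) = \delta_{ts}v \in \mathcal{N}(v)$. Combined with $d(\delta_t w, \delta_t z) = t\,d(w,z)$, one gets $d(\delta_t w, \mathcal{N}(v)) = t\,d(w,\mathcal{N}(v))$ and $d(\delta_t w) = t\,d(w)$, so the defining inequality $d(w,\mathcal{N}(v)) < \theta d(w)$ is preserved under $\delta_t$. Since $\delta_R$ maps $B(0,1)$ bijectively onto $B(0,R)$ and scales $\mathcal{L}^{N}$ by $R^{Q}$, it follows that
\[
\frac{\mathcal{L}^{N}(B(0,R)\cap\mathcal{C}(v,\theta))}{R^{Q}} \;=\; \mathcal{L}^{N}(B(0,1)\cap\mathcal{C}(v,\theta))
\]
for every $R>0$, so the ratio is in fact constant and the liminf is simply this number.

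Next I would show that this constant is strictly positive. The function $w \mapsto d(w,\mathcal{N}(v)) - \theta d(w)$ is continuous on $\bbG$ (the CC distance induces the Euclidean topology), so $\mathcal{C}(v,\theta)$ is open. Pick any $w_{0} := \delta_{1/2}v \in \mathcal{N}(v)$; then $d(w_{0},\mathcal{N}(v))=0 < \theta/2 = \theta d(w_{0})$, so $w_{0} \in \mathcal{C}(v,\theta)$, and since $d(w_{0})=1/2$, also $w_{0}$ lies in the interior of $B(0,1)$. Hence some nonempty CC ball around $w_{0}$ is contained in $B(0,1)\cap\mathcal{C}(v,\theta)$, giving positive measure.

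For the second assertion, the density-one hypothesis on $A$ gives $\mathcal{L}^{N}(B(0,R)\setminus A)/(R^{Q}V) \to 0$ as $R \downarrow 0$. Combining with the first part, for $R$ small enough
\[
\mathcal{L}^{N}(B(0,R)\cap\mathcal{C}(v,\theta)) \;>\; \mathcal{L}^{N}(B(0,R)\setminus A),
\]
which forces $B(0,R)\cap\mathcal{C}(v,\theta)\cap A \neq \varnothing$. There is no genuine obstacle here; the only mildly subtle point is the dilation invariance of $\mathcal{C}(v,\theta)$, which rests on the fact that the CC distance is $1$-homogeneous and the ray $\mathcal{N}(v)$ is a union of dilation orbits.
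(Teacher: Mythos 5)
Your proof is correct, and it takes a genuinely different route from the paper's. The paper argues directly by exhibiting a concrete ball inside the intersection: it shows $B(\delta_{R/2}v,LR)\subset B(0,R)\cap\mathcal{C}(v,\theta)$ with $L=\theta/(4(1+\theta))$ via explicit triangle-inequality estimates, yielding the quantitative lower bound $VL^Q R^Q$. You instead observe that $\mathcal{C}(v,\theta)$ is a dilation-invariant open cone (using $1$-homogeneity of $d$ and dilation invariance of the ray $\mathcal{N}(v)$), from which the ratio $\mathcal{L}^N(B(0,R)\cap\mathcal{C}(v,\theta))/R^Q$ is \emph{exactly} constant in $R$, and positivity follows by noting that $\delta_{1/2}v$ is an interior point of $B(0,1)\cap\mathcal{C}(v,\theta)$. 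Your version is cleaner and proves more (the liminf is actually a limit and is a constant independent of $R$, not just bounded below); the paper's version is more hands-on and produces an explicit constant without invoking openness or continuity of the distance-to-a-set function. The second assertion is handled identically in both. One small point worth making explicit if you wrote this up: $d(\delta_t w,\mathcal{N}(v))=t\,d(w,\mathcal{N}(v))$ uses that $t\mapsto s/t$ is a bijection of $(0,\infty)$, so reparametrizing the infimum over the ray is legitimate — but this is immediate and not a gap.
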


\begin{proof}
Fix $R>0$. We will show that if $L=\frac{\theta}{4(1+\theta)}$, then
\begin{equation}\label{inclusion_balls}
B(\delta_{R/2}v,LR)\subset B(0,R)\cap \mathcal{C}(v,\theta).
\end{equation}
To this end, suppose $y\in B(\delta_{R/2}v,LR)$. Then $d(y,\delta_{R/2}v)<LR$. By definition of $L$ and the fact $0<\theta<1$, it follows $L<\theta/4\theta=1/4$. Hence
\[d(y)\leq d(y,\delta_{R/2}v)+d(\delta_{R/2}v)<R/2+R/2=R.\]
Hence $y\in B(0,R)$. To verify $y\in \mathcal{C}(v,\theta)$, it suffices to verify $d(y,\delta_{R/2}v)<\theta d(y)$. By the triangle inequality, we have $d(y)\geq d(\delta_{R/2}v)-LR=R/2-LR$. Since $d(y,\delta_{R/2}v)<LR$ from the definition of $y$, it suffices to have $LR<\theta(R/2-LR)$. Equivalently $L<\theta/2-\theta L$ or $L(1+\theta)<\theta/2$. This is valid for our choice of $L$, so $y\in \mathcal{C}(v,\theta)$ follows. This verifies \eqref{inclusion_balls}. Hence
\[\mathcal{L}^{N}(B(0,R)\cap \mathcal{C}(v,\theta))\geq V L^{Q}R^{Q}.\]
Since $V$ and $L$ are independent of $R$, this proves the first part of the claim. The second part is an easy consequence, since the density of $A$ in balls $B(0,R)$ approaches one as $R\downarrow 0$.
\end{proof}

The following two lemmas are adaptations of \cite[Lemma 4.1]{Del} and \cite[Proposition 4.1]{Del} from the Euclidean setting.

\begin{lemma}\label{homlimit}
Let $P \colon \mathbb{G} \to \mathbb{R}$  be a homogeneous polynomial. Let $A\subset \mathbb{G}$ be measurable and $x\in \mathbb{G}$ such that $A$ has density one at $x$. Suppose $\varphi \colon A\setminus \{x\}\to \mathbb{R}$ is defined by
\[
\varphi(y)=P\left(\delta_{\frac{1}{d(x,y)}}(x^{-1}y)\right)
\]
satisfies
\begin{equation}\label{zerolimit}
\aplim_{\substack{y\to x\\ y\in A}} \varphi(y)=0.
\end{equation}
Then $P$ is the constant zero polynomial.
\end{lemma}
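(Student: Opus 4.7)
The plan is to argue by contradiction. Suppose $P \not\equiv 0$. Since $P$ is a nontrivial homogeneous polynomial, using the scaling $P(\delta_\lambda z) = \lambda^d P(z)$ from Lemma \ref{polyfacts}, I can pick $v \in \bbG$ with $d(v) = 1$ and $c := |P(v)| > 0$. By continuity of $P$, there is $\eta > 0$ so that $|P(w)| > c/2$ for all $w$ with $d(w,v) < \eta$. The goal will be to exhibit a set of positive lower density at $x$ on which $|\varphi|$ stays above $c/2$, contradicting the hypothesis $\aplim_{y \to x} \varphi(y) = 0$.

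The key step will be to use (a left translate of) the cone $\mathcal{C}(v,\theta)$ from Lemma \ref{densityclaim}, choosing $\theta$ small and exploiting its dilation-invariance. I would first verify that $w \in \mathcal{C}(v,\theta)$ if and only if $\delta_\lambda w \in \mathcal{C}(v,\theta)$ for every $\lambda > 0$, directly from the homogeneity of $d$. In particular, whenever $w \in \mathcal{C}(v,\theta)$, the normalized element $\tilde w := \delta_{1/d(w)} w$ again lies in $\mathcal{C}(v,\theta)$ and has $d(\tilde w) = 1$. Any such $\tilde w$ satisfies $d(\tilde w, \delta_s v) < \theta$ for some $s > 0$, and applying the triangle inequality to $0, \tilde w, \delta_s v$ (together with $d(\delta_s v) = s$) gives $|s - 1| < \theta$. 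Continuity of the map $s \mapsto \delta_s v$ at $s = 1$ then lets me choose $\theta > 0$ small enough that $d(\delta_s v, v) < \eta/2$ whenever $|s-1| < \theta < \eta/2$. For such $\theta$, every unit-norm $\tilde w \in \mathcal{C}(v,\theta)$ satisfies $d(\tilde w, v) < \eta$ and hence $|P(\tilde w)| > c/2$.

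Next, I would apply Lemma \ref{densityclaim} to the left-translated set $A' := x^{-1} A$, which has density one at $0$ by left-invariance of $\mathcal{L}^N$. Combined with the positive lower density of $\mathcal{C}(v,\theta)$ at $0$ (Lemma \ref{densityclaim}) and a standard subtraction, $A' \cap \mathcal{C}(v,\theta)$ has positive lower density at $0$, so $A \cap x\mathcal{C}(v,\theta)$ has positive lower density at $x$. For any $y$ in the latter intersection, $x^{-1}y \in \mathcal{C}(v,\theta)$, so $\tilde w := \delta_{1/d(x,y)}(x^{-1}y)$ is a unit-norm point of $\mathcal{C}(v,\theta)$, yielding $|\varphi(y)| = |P(\tilde w)| > c/2$. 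But the hypothesis $\aplim_{y\to x} \varphi(y) = 0$ means $\{y \in A : |\varphi(y)| > c/2\}$ has density zero at $x$. Since this set contains $A \cap x\mathcal{C}(v,\theta)$, which has positive lower density at $x$, we reach a contradiction and conclude $P \equiv 0$.

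The hard part will be the second paragraph: showing that by shrinking $\theta$ one can force the (scale-invariant) cone $\mathcal{C}(v,\theta)$ to pinch down near the single direction $v$ on the unit CC-sphere, rather than merely near the ray $\mathcal{N}(v)$. This is what allows continuity of $P$ at the single point $v$ to control $|\varphi|$ from below on an entire scale-invariant cone, which is crucial because the approximate limit hypothesis is insensitive to sets of density zero and only scale-invariant cones produce positive lower density at $x$.
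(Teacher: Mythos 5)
Your proof is correct and takes essentially the same approach as the paper: both rely on Lemma \ref{densityclaim} together with continuity of $P$ to force $P(v)=0$ for each $v$ on the unit CC-sphere, the only difference being that you argue by contradiction with a single small cone (using its dilation-invariance to pinch the normalized points near $v$), whereas the paper proceeds directly, taking $\theta=1/h\to 0$ and extracting a sequence $z_h\to v$ with $P(z_h)\to 0$.
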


\begin{proof}
For integer $h\geq 1$, let
\begin{equation*}
A_h:=\left\{y\in A\setminus\{x\} : |\varphi(y)|< 1/h\right\}.
\end{equation*}

Equation \eqref{zerolimit} implies that $x$ is a point of density one of $A_{h}$ for all $h>1$. Fix $v\in \bbG$ with $d(v)=1$ and recall the set $\mathcal{N}(v)$. Apply Lemma \ref{densityclaim} with $A$ replaced by $x^{-1}A_{h}$, $v$ as fixed, $\theta=1/h$, and apply translation by $x$. This shows that, for $h\geq 1$, there exist
\[y_h\in A_h\cap x\mathcal{C}(v,1/h)\cap B(x,1/h).\]

Next define
\begin{equation*}
z_h:=\delta_{\frac{1}{d(x,y_h)}}(x^{-1}y_h).
\end{equation*}
We claim that $z_{h}\to v$ as $h\to \infty$. The definition of $y_{h}$ implies $x^{-1}y_{h}\in \mathcal{C}(v,1/h)$. Hence there exists $t>0$ such that $d(x^{-1}y_{h},\delta_{t}v)<d(x^{-1}y_{h})/h$. Applying dilations, $d(z_{h},\delta_{\frac{t}{d(x^{-1}y_{h})}}v)<1/h$. By the triangle inequality, it follows
\[ |d(z_{h})-d(\delta_{\frac{t}{d(x,y_{h})}}v)|<1/h.\]
Hence
\[|1-\frac{t}{d(x,y_{h})}|<1/h.\]
This implies $\frac{t}{d(x^{-1}y_{h})}\to 1$ and so, by continuity of dilations, $\delta_{\frac{t}{d(x^{-1}y_{h})}}v \to v$. Then
\[d(z_{h},v)\leq d(z_{h},\delta_{\frac{t}{d(x^{-1}y_{h})}}v) + d(\delta_{\frac{t}{d(x^{-1}y_{h})}}v,v)\to 0.\]
Hence $z_{h}\to v$ as claimed.

Finally, $y_{h}\in A_{h}$ implies $|\varphi(y_{h})|<1/h$. By definition of $\varphi$ in terms of $P$, it follows $|P(z_{h})|<1/h$. Using continuity of $P$ and $z_{h}\to v$ yields $P(v)=0$.
The conclusion follows from the arbitrary choice of $v\in \partial B(0,1)$ and the fact that $P$ is homogeneous. 
\end{proof}

\begin{proposition}[Uniqueness]\label{structure_of_p} 
Suppose $f\colon D\to \bbR$ is a measurable function defined on a measurable set $D\subset \bbG$. Let $x_{0}\in D$ be a point of density of $D$.

\begin{enumerate}
\item Suppose $f$ has an approximate $(k-1)$-Taylor polynomial at $x_{0}$ for some positive integer $k$, denoted
\[p(x_0,x)=\sum_{|J|_{\mathbb{G}}\leq k-1} \alpha_J(x_0)\frac{(x_0^{-1}x)^J}{J!}.\]
Then the coefficients $\alpha_J(x_0)$, $|J|_{\mathbb{G}}\leq k-1$, are uniquely determined. Hence $p(x_0,x)$ is uniquely determined.
\item Suppose $f$ is approximately differentiable of order $k$ at $x_0$ for some non-negative integer $k$, with polynomial
\[p(x_0,x)=\sum_{|J|_{\mathbb{G}}\leq k} \alpha_J(x_0)\frac{(x_0^{-1}x)^J}{J!}.\]
Then the coefficients $\alpha_J(x_0)$, $|J|_{\mathbb{G}}\leq k$, are uniquely determined. Hence $p(x_0,x)$ is uniquely determined.
\end{enumerate}
\end{proposition}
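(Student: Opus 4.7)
The plan is to reduce part (1) to part (2) via Lemma \ref{approxtaylor}, and to prove part (2) by analyzing the difference of two candidate polynomials through its homogeneous decomposition, applying Lemma \ref{homlimit} to its lowest-order nonzero component.

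For part (1), if $p_{1}$ and $p_{2}$ are both approximate $(k-1)$-Taylor polynomials of $f$ at $x_{0}$, then by Lemma \ref{approxtaylor} each is also an approximate derivative polynomial of order $k-1$ at $x_{0}$. The uniqueness from part (2), applied with $k$ replaced by $k-1$, then yields $p_{1}=p_{2}$.

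For part (2), suppose $p_{1}$ and $p_{2}$ both satisfy \eqref{apprxDiff}. Set $q(x)\defi p_{1}(x_{0},x)-p_{2}(x_{0},x)$ and substitute $y=x_{0}^{-1}x$ to obtain a polynomial $\tilde q(y)\defi q(x_{0}y)$ of homogeneous degree at most $k$ with
\[
\aplim_{y\to 0,\, y\in A}\frac{|\tilde q(y)|}{d(y)^{k}}=0,
\]
where $A\defi x_{0}^{-1}D$ has density one at $0$. Decompose $\tilde q=\sum_{j=0}^{k}Q_{j}$ into its homogeneous components (so each $Q_{j}$ is homogeneous of degree $j$) and, aiming for a contradiction, let $j_{0}\defi \min\{j:Q_{j}\not\equiv 0\}$.

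By Lemma \ref{dxj}, each $Q_{j}$ satisfies $|Q_{j}(y)|\leq C d(y)^{j}$, so the higher-order tail obeys $|\tilde q(y)-Q_{j_{0}}(y)|\leq C' d(y)^{j_{0}+1}$ near $0$. Since $k-j_{0}\geq 0$, multiplying the hypothesis by the ordinary factor $d(y)^{k-j_{0}}\to 0$ yields
\[
\aplim_{y\to 0,\, y\in A}\frac{|\tilde q(y)|}{d(y)^{j_{0}}}=0,
\]
and combining this with the remainder estimate gives $\aplim_{y\to 0,\, y\in A}|Q_{j_{0}}(y)|/d(y)^{j_{0}}=0$. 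By homogeneity, $Q_{j_{0}}(y)/d(y)^{j_{0}}=Q_{j_{0}}(\delta_{1/d(y)}y)$, which is precisely the hypothesis of Lemma \ref{homlimit} applied with $x=0$ and $P=Q_{j_{0}}$. That lemma forces $Q_{j_{0}}\equiv 0$, contradicting the minimality of $j_{0}$. Hence $\tilde q\equiv 0$, so $p_{1}=p_{2}$ and the coefficients $\alpha_{J}(x_{0})$ are uniquely determined.

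The main technical point to justify carefully is the propagation of the approximate-limit hypothesis from denominator $d(y)^{k}$ down to denominator $d(y)^{j_{0}}$: dividing by a smaller positive power of $d(y)$ would normally make the quotient \emph{larger}, so one must use the fact that the ordinary factor $d(y)^{k-j_{0}}$ tends to $0$ to absorb the weakening. Once this passage is handled cleanly, the reduction to the homogeneous-polynomial lemma is immediate, and the iteration-style argument isolating $Q_{j_{0}}$ becomes purely bookkeeping.
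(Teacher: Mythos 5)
Your proof is correct and takes essentially the same approach as the paper: both decompose the difference polynomial into its homogeneous components, pass the approximate-limit hypothesis down to the appropriate lower power of $d$, and apply Lemma \ref{homlimit} to force the relevant component to vanish. The only organizational difference is that you prove (2) first via a minimal-index argument and then derive (1) from (2) through Lemma \ref{approxtaylor}, whereas the paper proves (1) directly by forward induction on the homogeneous degree and remarks that (2) is analogous; both structures are valid.
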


\begin{proof}
We start by proving (1). Suppose that $q(x_0,x)$ is another polynomial centred at $x_0$ of homogeneous degree at most $(k-1)$ for which \eqref{approxTaylor} holds with $p(x_0,x)$ replaced by $q(x_0,x)$. Then
\[
\aplimsup_{x \to x_{0}}\frac{|p(x_0,x)-q(x_0,x)|}{d(x_0,x)^k}<\infty.
\]
While polynomomials are defined on $\B{G}$, note that all approximate limits in this proof are over the domain $D$ of $f$. Since the polynomials $p$ and $q$ are centered at $x_{0}$, we can write
\begin{align*}
&p(x_0,x)=\sum_{|J|_{\mathbb{G}}\leq k-1} \alpha_J(x_0)\frac{(x_0^{-1}x)^{J}}{J!},\\
&q(x_0,x)=\sum_{|J|_{\mathbb{G}}\leq k-1} \beta_J(x_0)\frac{(x_0^{-1}x)^{J}}{J!}.
\end{align*}
We have
\begin{align}\label{polato}
0&=\aplim_{x\to x_{0}}\frac{|p(x_0,x)-q(x_0,x)|}{d(x_0,x)^{k-1}}\\
\nonumber
&=\left|\aplim_{x\to x_{0}}\sum_{|J|_{\mathbb{G}}\leq k-1}\frac{(\alpha_J(x_0)-\beta_J(x_0))}{J!}\frac{(x_0^{-1}x)^J}{d(x_0,x)^{k-1}}\right|.
\end{align}
Multiplying by $d(x_{0},x)^{k-1}$ and taking the limit gives
\begin{equation}\label{Q0}
\alpha_{\underline{0}}(x_0)=\beta_{\underline{0}}(x_0)
\end{equation}
where $\underline{0}$ denotes the multi-index with all entries $0$.

Next for $0\leq i\leq k-1$ define
\[
Q_i(z):=\sum_{|J|_{\mathbb{G}}=i}\frac{(\alpha_J(x_0)-\beta_J(x_0))}{J!}z^J.
\]
Notice that each $Q_{i}$ is a homogeneous polynomial of homogeneous degree $i$. By Lemma \ref{dxj}, there exists $C>0$ such that 
\begin{equation}\label{stimaqw}
|Q_i(z)|\leq C d(z)^i \quad \mbox{for all}\ z\in\mathbb{G}, \quad i=1,\dots, k-1.
\end{equation}

Using \eqref{Q0} and the definition of $Q_{i}$, the terms in \eqref{polato} can be written as
\[
\left|\aplim_{x\to x_0}\sum_{i=1}^{k-1} \frac{Q_i(x_0^{-1}x)}{d(x_0,x)^{k-1}}\right|=0.
\]

We now verify by induction that $Q_i \equiv 0$ for all $0\leq i\leq k-1$.  By \eqref{Q0} we have $Q_0\equiv 0$. Assume $Q_0 \equiv \dots \equiv Q_h \equiv 0$, with $0\leq h \leq k-2$. Then 
\[\left|\aplim_{x\to x_0}\sum_{i=h+1}^{k-1} \frac{Q_i(x_0^{-1}x)}{d(x_0,x)^{k-1}}\right|=0.\]
Hence
\[\left|\aplim_{x\to x_0}\sum_{i=h+1}^{k-1} \frac{Q_i(x_0^{-1}x)}{d(x_0,x)^{h+1}}\right|=0.\]
It follows from \eqref{stimaqw} that
\[ \aplim_{x\to x_{0}} \frac{|Q_{h+1}(x_0^{-1}x)|}{d(x_0,x)^{h+1}}=0.\]
The proof of (1) then follows by applying Lemma \eqref{homlimit} and using the fact that $Q_{h+1}$ is homogeneous of degree $h+1$.

The proof of (2) is similar, except starting at \eqref{polato} with $k-1$ replaced by $k$.
%
 \end{proof}

\subsection{De Giorgi Lemma in Carnot Groups}

\begin{lemma}[De Giorgi Lemma for Carnot groups]\label{degiorgi_carnot}
Let $E\subset \B{G}$ be a measurable subset of a ball $B(x_0,r)$ such that $\mathcal{L}^N(E)\geq Ar^Q$ for some $A>0$. Let $k$ be a positive integer.

Then there exists a positive constant $C$, depending only on $k,Q$ and $A$, such that for all polynomials $P$ of homogeneous degree at most $k$ and multi-indices $\alpha$,
\[
|X^{\alpha}P(x_0)|\leq \frac{C}{r^{Q+|\alpha|_{\B{G}}}}\int_E |P(y)| \dd y.
\]
\end{lemma}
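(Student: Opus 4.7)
The plan is to reduce to the Euclidean De Giorgi lemma (Lemma \ref{DeGiorgi}) by a translation followed by a dilation, then handle the resulting base case using Lemma \ref{esxa}.

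For the reduction to $x_{0}=0$, $r=1$, I would first translate by setting $\tilde{P}(y) := P(x_{0}y)$ and $\tilde{E} := x_{0}^{-1}E \subset B(0,r)$. From the coordinate expression $(x_{0}y)_{i} = x_{0,i}+y_{i}+\mathcal{Q}_{i}(x_{0},y)$ one checks that $\tilde{P}$ is again a polynomial of homogeneous degree at most $k$ in $y$. Left-invariance of $X^{\alpha}$ gives $X^{\alpha}\tilde{P}(0)=X^{\alpha}P(x_{0})$, and translation-invariance of $\mathcal{L}^{N}$ yields $\mathcal{L}^{N}(\tilde{E})\geq Ar^{Q}$ and $\int_{\tilde{E}}|\tilde{P}|=\int_{E}|P|$. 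Next I would dilate by setting $\hat{P}(z):=\tilde{P}(\delta_{r}z)$ and $\hat{E}:=\delta_{1/r}\tilde{E}\subset B(0,1)$. The $d_{j}$-homogeneity of $X_{j}$ under $\delta_{r}$ gives $X^{\alpha}\hat{P}(0)=r^{|\alpha|_{\bbG}}X^{\alpha}\tilde{P}(0)$, while the dilation change of variables gives $\mathcal{L}^{N}(\hat{E})=r^{-Q}\mathcal{L}^{N}(\tilde{E})\geq A$ and $\int_{\hat{E}}|\hat{P}|=r^{-Q}\int_{\tilde{E}}|\tilde{P}|$. Combining these, the claim reduces to the base case
\[ |X^{\alpha}\hat{P}(0)|\leq C\int_{\hat{E}}|\hat{P}|,\qquad \hat{E}\subset B(0,1),\ \mathcal{L}^{N}(\hat{E})\geq A. \]

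For this base case, one may assume $|\alpha|_{\bbG}\leq k$, since otherwise Lemma \ref{polyderiv} forces $X^{\alpha}\hat{P}\equiv 0$ and the inequality is trivial. Applying Lemma \ref{esxa} to $\hat{P}$ and evaluating at $0$, I obtain
\[ X^{\alpha}\hat{P}(0) = D^{\alpha}\hat{P}(0) + \sum_{\beta} c_{\beta,\alpha}\, D^{\beta}\hat{P}(0), \]
where the sum runs over multi-indices $\beta\neq\alpha$ with $|\beta|\leq|\alpha|$ and $|\beta|_{\bbG}=|\alpha|_{\bbG}$, and $c_{\beta,\alpha}:=Q_{\beta,\alpha}(0)$ are universal constants: indeed, for $|\beta|_{\bbG}>|\alpha|_{\bbG}$ the polynomial $Q_{\beta,\alpha}$ is homogeneous of positive degree and hence vanishes at $0$, while for $|\beta|_{\bbG}=|\alpha|_{\bbG}$ it is homogeneous of degree zero, i.e.\ a constant. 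By \eqref{degree1}, $\hat{P}$ has Euclidean degree at most $k$, and by \eqref{disugdist} applied to the compact set $\overline{B(0,1)}$ there exists $R_{0}=R_{0}(\bbG)>0$ such that $\hat{E}\subset B_{\mathrm{euc}}(0,R_{0})$ and $\mathcal{L}^{N}(\hat{E})\geq (A/R_{0}^{N})R_{0}^{N}$. Applying Lemma \ref{DeGiorgi} to each $D^{\beta}\hat{P}(0)$ appearing in the finite sum and combining linearly yields the required estimate with a constant depending only on $k$, $Q$, $A$ (with $\bbG$ fixed).

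The main obstacle is the bookkeeping in the base case. What makes the constant depend only on $k$, $Q$, $A$, and not on $\alpha$ or on the point of evaluation, is that at the origin the non-Euclidean part of $X^{\alpha}$ collapses to a finite linear combination of Euclidean partial derivatives with universal coefficients; this rests crucially on $Q_{\beta,\alpha}(0)=0$ for $|\beta|_{\bbG}>|\alpha|_{\bbG}$ together with $|\alpha|_{\bbG}\leq k$, which uniformly bounds the index of summation.
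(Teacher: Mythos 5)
Your proof is correct and follows essentially the same route as the paper: reduce to the unit ball centered at the origin (you translate then dilate in two steps, the paper uses the single map $T(x)=\delta_{1/r}(x_0^{-1}x)$), establish $X^{\alpha}\hat{P}(0)=r^{|\alpha|_{\bbG}}X^{\alpha}P(x_0)$ using left-invariance and dilation-homogeneity of $X^{\alpha}$ (the paper derives this identity via Lemma \ref{polyderiv} after reducing to homogeneous $W$, you use the $d_j$-homogeneity of the basis fields $X_j$ directly), and finally invoke Lemma \ref{esxa} at the origin to convert to Euclidean partials, enclose $B(0,1)$ in a Euclidean ball via \eqref{disugdist}, and apply the Euclidean De Giorgi lemma. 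Your explicit remark that $Q_{\beta,\alpha}(0)$ vanishes for $|\beta|_{\bbG}>|\alpha|_{\bbG}$ and is constant for $|\beta|_{\bbG}=|\alpha|_{\bbG}$ is a useful clarification that the paper leaves implicit.
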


\begin{proof}
By Lemma \ref{polyderiv}, $X^{\alpha} P\equiv 0$ for every multi-index $\alpha$ with $|\alpha|_{\mathbb{G}}\geq k+1$. Hence to prove the inequality we can assume without loss of generality $|\alpha|_{\mathbb{G}}< k+1$.

We first prove the lemma in the case $r=1$ and $x_{0}=0$. Let $E$ be a measurable subset of $B(0,1)$ such that $\mathcal{L}^N(E)\geq A$. By \eqref{disugdist}, $B(0,1)\subset B_\mathbb{E}(0,c_1^{-1})$, where $B_\mathbb{E}(0,c_1^{-1})$ denotes the Euclidean ball centred at $0$ with radius $c_1^{-1}$.
By \eqref{degree1}, any polynomial $W$ of homogeneous degree at most $k$ has also standard degree at most $k$. Since $W$ is a polynomial, it is a $C^{\infty}$ function on $\bbR^{N}$ so we can apply Lemma \ref{esxa} to compute its derivatives. Recall the polynomials $Q_{\beta,\alpha}$ from Lemma \ref{esxa}. 
Let $C_{1}$ be the constant from Lemma \ref{DeGiorgi}. Combining Lemma \ref{DeGiorgi} with Lemma \ref{esxa}, for every multi-index $\alpha$ with $|\alpha|_{\bbG}<k+1$,
\begin{align}\label{stima0}
|(X^{\alpha} W)(0)|  & \leq \sum_{\substack{|\beta|\leq |\alpha|\\ |\beta|_{\mathbb{G}}\geq |\alpha|_{\mathbb{G}}}} |Q_{\beta,\alpha}(0)| \left(\left(\frac{\partial}{\partial x}\right)^\beta W\right)(0)\\
\nonumber
&\leq C_{1} \sum_{\substack{|\beta|\leq |\alpha|\\ |\beta|_{\mathbb{G}}\geq |\alpha|_{\mathbb{G}}}} |Q_{\beta,\alpha}(0)| \int_{E} |W(y)| \dd y
\end{align}
This proves the lemma in the case $r=1$ and $x_{0}=0$.

We now prove the lemma for general $r>0$ and $x_{0}\in \bbG$. Take a measurable subset $E$ of $B(x_0,r)$ such that $\mathcal{L}^N(E)\geq  A r^{Q}$. Let $W$ be a polynomial of homogeneous degree at most $k$. Denote by $T:\mathbb{G}\to\mathbb{G}$ the map $T(x)=\delta_{\frac{1}{r}}(x_0^{-1}x)$. Hence $T^{-1}(x)=x_0\delta_{r}(x)$ and the classical change of variable formula gives:
\begin{equation}\label{cambio}
\int_{E}|W(y)| \dd y = r^Q\int_{T(E)}|W(x_0\delta_{r} y)| \dd y.
\end{equation}
Further, $E\subset B(x_{0},r)$ implies
\begin{equation}\label{cambio1}
T(E)\subset B(0,1)
\end{equation}
and $\mathcal{L}^N(E)\geq A r^{Q}$ yields
\begin{equation}\label{stima1}
\mathcal{L}^N(T(E))\geq \frac{1}{r^Q}\mathcal{L}^N(E)\geq A.
\end{equation}
Define $S(y):=W(x_0\delta_r y)$. This is a homogeneous polynomial of degree at most $k$ in the variable $y$. Using \eqref{stima0} with $E$ replaced by $T(E)$, $W$ replaced by $S$, \eqref{cambio}, \eqref{cambio1} and \eqref{stima1},
\begin{align}
|(X^{\alpha} S)(0)|\leq C \int_{T(E)} |S(y)|\, dy=\frac{C}{r^Q}\int_E |W(y)| \dd y.
\end{align}
To conclude the proof it suffices to prove that
\begin{equation}\label{tesifinaledegiorgi}
(X^{\alpha} S)(0)=r^{|\alpha|_{\mathbb{G}}}(X^{\alpha}W)(x_{0}) \qquad \mbox{for every multi-index }\alpha.
\end{equation}
Clearly both sides are linear in $W$. Hence we can assume without loss of generality that $W$ is a homogeneous polynomial of homogeneous degree $m$. By Lemma \ref{polyderiv}, $X^{\alpha} W$ is either identically $0$ or is a homogeneous polynomial of homogeneous degree $m-|\alpha|_{\mathbb{G}}$. In either case, $(X^{\alpha}W)(\delta_{\lambda}x)=\lambda^{m-|\alpha|_{\mathbb{G}}}(X^{\alpha}W)(x)$. Let $L_{g}(x)=gx$ denote left translation by $g$. Since $X^{\alpha}$ is left invariant for any multi-index $\alpha$, we know $X^{\alpha}(\varphi \circ L_{g})=(X^{\alpha}\varphi)\circ L_{g}$ for any smooth function $\varphi\colon \bbG\to \bbR$ and $g\in \bbG$. Then
\begin{align*}
(X^{\alpha} S)(y) &=X^{\alpha}(W(x_0\delta_r y))\\
&=r^m X^\alpha(W(\delta_{\frac{1}{r}}(x_0)y))\\
&=r^m X^{\alpha}((W\circ L_{\delta_{\frac{1}{r}}x_{0}})(y))\\
&=r^{m}(X^{\alpha}W)(L_{\delta_{\frac{1}{r}}x_{0}}y)\\
&=r^m r^{|\alpha|_{\mathbb{G}}-m} (X^{\alpha}W)(x_{0}\delta_{r}y)\\
&=r^{|\alpha|_{\mathbb{G}}} (X^{\alpha}W)(x_{0}\delta_{r}y).
\end{align*}
Substituting $y=0$ yields \eqref{tesifinaledegiorgi} and completes the proof.
\end{proof}

\subsection{Measures of Intersections of Balls}

The following simple lemma will be useful in the proof of Theorem \ref{mainthma} and Theorem \ref{mainthmb}. Recall that $V$ denotes the measure of the unit ball with respect to the CC metric.

\begin{lemma}\label{lemma_R}
For every $x, y\in \bbG$,
\[
\frac{V}{2^{Q}} \leq\frac{\mathcal{L}^N(B(x,d(x,y))\cap B(y,d(x,y)))}{d(x,y)^Q}\leq V.
\]
\end{lemma}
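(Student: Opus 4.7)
The plan is to establish the two inequalities separately, with the upper bound being immediate and the lower bound reduced to producing a small ball inside the intersection centred at an (approximate) midpoint of $x$ and $y$.

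Write $r = d(x,y)$. For the upper bound, simply observe $B(x,r) \cap B(y,r) \subset B(x,r)$, so
\[\mathcal{L}^N(B(x,r) \cap B(y,r)) \leq \mathcal{L}^N(B(x,r)) = V r^Q,\]
using that balls have measure $V r^Q$ as recorded in Section \ref{section_dialtions}.

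For the lower bound, I would first produce a point $z \in \bbG$ with $d(x,z) \leq r/2$ and $d(y,z) \leq r/2$. Since the CC distance arises from lengths of horizontal curves, for each $\varepsilon > 0$ the Chow--Rashevskii theorem guarantees a horizontal curve $\gamma \colon [0,1] \to \bbG$ from $x$ to $y$ with $L(\gamma) < r + \varepsilon$. Choosing the parameter $t_\varepsilon$ splitting $\gamma$ into two subcurves of equal horizontal length and taking $z_\varepsilon = \gamma(t_\varepsilon)$ gives $d(x,z_\varepsilon), d(y,z_\varepsilon) \leq r/2 + \varepsilon/2$. For any $w \in B(z_\varepsilon, r/2 - \varepsilon/2)$ the triangle inequality yields $d(w,x) < (r/2 - \varepsilon/2) + (r/2 + \varepsilon/2) = r$ and similarly $d(w,y) < r$, so $B(z_\varepsilon, r/2 - \varepsilon/2) \subset B(x,r) \cap B(y,r)$. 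Taking Haar measure gives
\[\mathcal{L}^N(B(x,r) \cap B(y,r)) \geq V\bigl(r/2 - \varepsilon/2\bigr)^Q.\]
Letting $\varepsilon \downarrow 0$ yields $\mathcal{L}^N(B(x,r) \cap B(y,r)) \geq V r^Q / 2^Q$.

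There is no real obstacle here; the only subtlety is that geodesics need not be assumed to exist, which is why I would use the $\varepsilon$-approximation via horizontal curves above. Dividing through by $d(x,y)^Q = r^Q$ gives the claimed two-sided bound.
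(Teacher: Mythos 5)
Your proof is correct and follows the same strategy as the paper: the upper bound is immediate from containment, and the lower bound comes from inscribing a ball of radius comparable to $r/2$ centred at a midpoint of $x$ and $y$ inside the intersection. The one technical difference is that the paper simply invokes the fact that $(\bbG,d)$ is a geodesic metric space to produce an exact midpoint $z$ with $d(x,z)=d(y,z)=r/2$, and then uses the ball $B(z,r/2)$ directly; you instead avoid assuming geodesics exist and construct an approximate midpoint $z_\varepsilon$ from a near-minimizing horizontal curve, then pass to the limit $\varepsilon\downarrow 0$. Both arguments are valid (Carnot groups with the CC metric are complete, locally compact length spaces, hence geodesic by Hopf--Rinow), so the paper's shortcut is justified, but your version is slightly more self-contained and would work verbatim in any length metric space.
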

 
\begin{proof}
Fix $x,y\in \bbG$ and denote $\delta=d(x,y)$. Clearly
\[ \mathcal{L}^N(B(x,\delta)\cap B(y,\delta)) \leq \mathcal{L}^{N}(B(x,\delta))=V\delta^{Q},\]
which gives the upper bound.

Since $(\bbG, d)$ is a geodesic metric space, we can fix a point $z\in \bbG$ such that $d(x,z)=d(y,z)=\delta/2$. We claim that 
\begin{equation}\label{inclusion}
B(z,\delta/2)\subset B(x,\delta)\cap B(y,\delta).
\end{equation}
Indeed, suppose $w\in B(z,\delta/2)$. Then
\[d(w,x)\leq d(w,z)+d(z,x)<\delta/2 + \delta/2=\delta.\]
This shows $w\in B(x,\delta)$. A similar argument shows $w\in B(y,\delta)$, proving \eqref{inclusion}. Using \eqref{inclusion} then gives
\[ \mathcal{L}^N(B(x,\delta)\cap B(y,\delta)) \geq \mathcal{L}^{N}(B(z,\delta/2))=V\delta^{Q}/2^{Q},\]
which gives the lower bound and completes the proof.
\end{proof}

\section{Approximate Derivatives and Approximate Taylor Polynomials Have Measurable Coefficiets}\label{measurability}

In this section we prove that the coefficients of approximate derivatives and approximate Taylor polynomials are measurable. This will be important in the proof of Theorem \ref{mainthma} and Theorem \ref{mainthmb}.

\subsection{Distance Estimates}

We first verify two estimates that will be used in the proof of measurability.

\begin{lemma}\label{convergedistance}
For every $x_{0}\in \bbG$, $r>0$, and $k\in\mathbb{N}$,
\[\sup_{x\in B(y_0,r)} |d(x,x_0)^k - d(x,y_0)^k   |\to 0 \mbox{ as }y_{0}\to x_{0}.\]
\end{lemma}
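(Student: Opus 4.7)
The statement is a uniform continuity assertion about the function $x\mapsto d(x,x_0)^k$ under perturbation of the base point. I would prove it by combining the reverse triangle inequality with an elementary algebraic factorization of $a^k-b^k$, after first obtaining a uniform bound on the distances involved.

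\medskip

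\textbf{Step 1: Uniform bound.} Restrict attention to $y_0$ with $d(y_0,x_0)\leq 1$. For any $x\in B(y_0,r)$, the triangle inequality gives
\[
d(x,y_0)<r,\qquad d(x,x_0)\leq d(x,y_0)+d(y_0,x_0)<r+1.
\]
Hence both $d(x,x_0)$ and $d(x,y_0)$ lie in $[0,M]$ with $M:=r+1$, a constant independent of $y_0$ and $x$.

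\medskip

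\textbf{Step 2: Algebraic identity.} For any $a,b\geq 0$ and positive integer $k$,
\[
|a^k-b^k|=|a-b|\,\Bigl|\sum_{j=0}^{k-1}a^{k-1-j}b^{j}\Bigr|\leq k\max(a,b)^{k-1}|a-b|.
\]
Applied with $a=d(x,x_0)$ and $b=d(x,y_0)$, combined with the reverse triangle inequality $|d(x,x_0)-d(x,y_0)|\leq d(x_0,y_0)$ and the bound from Step~1,
\[
\bigl|d(x,x_0)^k-d(x,y_0)^k\bigr|\leq kM^{k-1}\,d(x_0,y_0).
\]

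\medskip

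\textbf{Step 3: Conclusion.} The right-hand side is independent of $x\in B(y_0,r)$, so taking the supremum gives
\[
\sup_{x\in B(y_0,r)}\bigl|d(x,x_0)^k-d(x,y_0)^k\bigr|\leq k(r+1)^{k-1}\,d(x_0,y_0),
\]
which tends to $0$ as $y_0\to x_0$. The case $k=0$ is trivial (the supremum vanishes identically).

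\medskip

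There is no real obstacle here; the only point requiring any care is ensuring the bound $M$ is uniform, which is achieved by restricting to a neighborhood $d(y_0,x_0)\leq 1$ before passing to the limit.
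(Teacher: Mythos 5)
Your proof is correct. It is essentially the same idea as the paper's, which proves the estimate by induction on $k$ using a single add-and-subtract step ($d(x,x_0)^k - d(x,y_0)^k = d(x,x_0)^{k-1}(d(x,x_0)-d(x,y_0)) + d(x,y_0)(d(x,x_0)^{k-1}-d(x,y_0)^{k-1})$), which is precisely the inductive unwinding of the factorization $a^k - b^k = (a-b)\sum_{j=0}^{k-1} a^{k-1-j}b^j$ that you invoke in one shot. Your version is slightly more direct and has the minor advantage of producing an explicit Lipschitz-type bound $k(r+1)^{k-1}\,d(x_0,y_0)$, whereas the paper only records the qualitative convergence; both rely on the same uniform bound $M = r+1$ on the distances, obtained from the triangle inequality after restricting to $d(y_0,x_0)\leq 1$.
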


\begin{proof}
We use induction on $k$. For $k=1$, the lemma follows from the triangle inequality. Suppose the lemma is true up to $k-1$. For the $k$'th case, assume $d(y_{0},x_{0})<1$. Then for $x\in B(y_0,r)$,
\begin{align*}
\big|& d(x,x_0)^k - d(x,y_0)^k \big|\\
&=\big| d(x,x_0)^k-d(x,x_0)^{k-1}d(x,y_0)+d(x,x_0)^{k-1}d(x,y_0)-d(x,y_0)^k \big|\\
&\leq d(x,x_0)^{k-1}\big|d(x,x_0)-d(x,y_0)\big|+d(x,y_0)\big| d(x,x_0)^{k-1}-d(x,y_0)^{k-1} \big|\\
&\leq \left(r+1\right)^{k-1}\big|d(x,x_0)-d(x,y_0)\big|+r\big| d(x,x_0)^{k-1}-d(x,y_0)^{k-1} \big|.
\end{align*}
The proof is concluded by taking the supremum and using the inductive step.
\end{proof}

\begin{lemma}\label{convergeindex}
For every $x_{0}\in \bbG$, $r>0$, and multi-index $J$,
\[
\sup_{x\in B(y_0,r)} |(x_0^{-1}x)^J-(y_0^{-1}x)^J|\to 0 \mbox{ as }y_{0}\to x_{0}.
\]
\end{lemma}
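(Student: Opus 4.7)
The plan is to exploit the fact that, in exponential coordinates, the group law is polynomial (equation \eqref{exprecoord}), so the map $(y_0,x)\mapsto(y_0^{-1}x)^J$ is a polynomial in the Euclidean variables and hence uniformly continuous on any Euclidean compact set. The lemma will then follow once we confine $x\in B(y_0,r)$ and $y_0$ near $x_0$ to a common Euclidean compact set.

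First I would show that CC balls of bounded radius lie in an Euclidean compact set. Using the homogeneous norm $\|x\|_{\bbG}=\sum_{j=1}^{s}|x^{(j)}|^{1/j}$ introduced in the proof of Lemma \ref{dxj} together with Proposition \ref{equivalenza}, one sees that any bound of the form $d(z)\leq R$ forces $|z|\leq C(R)$ in the Euclidean sense. Combined with the polynomial (hence locally Euclidean-bounded) expression for left translation given by \eqref{exprecoord}, this shows that for $d(y_0,x_0)<1$ one has $B(y_0,r)\subset B(x_0,r+1)\subset K$, where $K\subset\mathbb{R}^N$ is a Euclidean compact set depending only on $x_0$ and $r$. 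Similarly, $y_0$ itself lies in a fixed Euclidean compact neighbourhood $K_0$ of $x_0$.

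Next, writing $y_0^{-1}x=-y_0+x+\mathcal{Q}(-y_0,x)$ with $\mathcal{Q}$ polynomial, the function
\[
\Phi_J(y_0,x)\defi(y_0^{-1}x)^J
\]
is a polynomial on $\mathbb{R}^N\times\mathbb{R}^N$. It is therefore uniformly continuous on the compact set $K_0\times K$; let $\omega$ be its Euclidean modulus of continuity there. By \eqref{disugdist} applied on a Euclidean compact neighbourhood of $x_0$, we have $|y_0-x_0|\leq c_1^{-1}d(y_0,x_0)\to 0$ as $y_0\to x_0$, and hence
\[
\sup_{x\in B(y_0,r)}|(x_0^{-1}x)^J-(y_0^{-1}x)^J|=\sup_{x\in B(y_0,r)}|\Phi_J(x_0,x)-\Phi_J(y_0,x)|\leq\omega(|y_0-x_0|)\longrightarrow 0,
\]
which is the desired conclusion. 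There is no serious obstacle; the only point to be careful with is the reduction of CC-closeness to Euclidean closeness, which is handled by the homogeneous-norm estimate above together with \eqref{disugdist}.
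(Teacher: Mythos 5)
Your proof is correct, and it takes a genuinely different and notably cleaner route than the paper's. The paper proves the lemma by an explicit induction on $|J|_{\bbG}$: it first establishes a separate claim controlling $\sup_{x\in B(y_0,r)}|\mathcal{Q}_i(x_0,x)-\mathcal{Q}_i(y_0,x)|$ by unwinding the structure of $\mathcal{Q}_i$ via \eqref{defQi}, then peels off one factor $x_i - x_{0,i} + \mathcal{Q}_i(-x_0,x)$ at a time from $(x_0^{-1}x)^J$ and estimates the difference term-by-term using Lemma \ref{dxj} and the CC/Euclidean comparison. Your approach instead observes at once that $\Phi_J(y_0,x)=(y_0^{-1}x)^J$ is a Euclidean polynomial in $(y_0,x)$, so the whole problem reduces to uniform continuity of a continuous function on a compact set plus the two standard comparison facts (CC-bounded $\Rightarrow$ Euclidean-bounded via the homogeneous norm and Proposition \ref{equivalenza}; CC-close $\Rightarrow$ Euclidean-close via \eqref{disugdist}). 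Both arguments are sound; the paper's explicit computation would yield quantitative rates of convergence if one tracked constants, whereas your argument is shorter, conceptually transparent, and avoids the induction entirely, at the cost of delivering only the qualitative statement (which is all that is claimed). One small point worth making explicit when you write it up: the modulus-of-continuity bound you use is $|\Phi_J(x_0,x)-\Phi_J(y_0,x)|\le\omega(|(x_0,x)-(y_0,x)|)=\omega(|x_0-y_0|)$, i.e.\ the second coordinate cancels because it is the same on both sides, so the bound is uniform in $x\in K$.
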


\begin{proof}
Recall $\mathcal{Q}_i(x,y)$ and $\mathcal{R}^i_{h,k}(x,y)$ defined in \eqref{exprecoord} and \eqref{defQi}. We first prove the following claim.

\begin{claim}\label{stimaQ}
For every $x_{0}\in \bbG$, $r>0$ and $1\leq i\leq N$
\[
\sup_{x\in B(y_0,r)} |\mathcal{Q}_i(x_0,x)-\mathcal{Q}_i(y_0,x)|\to 0 \mbox{ as }y_{0}\to x_{0}.
\]
\end{claim}

\begin{proof}[Proof of Claim \ref{stimaQ}]
By \eqref{defQi}, we have
\begin{align*}
&|\mathcal{Q}_i(x_0,x)-\mathcal{Q}_i(y_0,x)|\\
\nonumber
&\qquad \leq \sum_{h,k} |\mathcal{R}^i_{h,k}(x_0,x)||x_k(x_{0,h}-y_{0,h})-x_h(x_{0,k}-y_{0,k})|\\
\nonumber
&\qquad \qquad +\sum_{h,k}|x_ky_{0,h}-x_hy_{0,k}||\mathcal{R}^i_{h,k}(x_0,x)-\mathcal{R}^i_{h,k}(y_0,x)|.
\end{align*}
Recalling that each $\mathcal{R}^i_{h,k}$ is a homogeneous polynomial, we can write 
\begin{align}\label{exr}
\mathcal{R}^i_{h,k}(x_0,x)=\sum_{j} d^{i}_{h,k,j} x^{\alpha_j}x_{0}^{\beta_{j}}
\end{align}
for suitable $d^{i}_{h,k,j}\in\mathbb{R}$ and multi-indices $\alpha_j=(\alpha_{j1},\dots, \alpha_{jN}),\beta_j=(\beta_{j1},\dots,\beta_{jN})$. Recall $\tilde{c}$ from Lemma \ref{dxj}. Combining Lemma \ref{dxj} with \eqref{exr}, we have
\begin{align}\label{prima}
&|\mathcal{R}^i_{h,k}(x_0,x)-\mathcal{R}^i_{h,k}(y_0,x)|\\
&\qquad \leq \sum_{j}| d^{i}_{h,k,j}| |x^{\alpha_{j}}||x_{0}^{\beta_{j}}-y_{0}^{\beta_{j}}| \nonumber \\
\nonumber
&\qquad \leq \tilde c\sum_{j}| d^{i}_{h,k,j}| d(x)^{|\alpha_j|_{\mathbb{G}}}|x_{0}^{\beta_{j}}-y_{0}^{\beta_{j}}|.
\end{align}
Also,
\begin{align}\label{prima2}
|\mathcal{R}^i_{h,k}(x_0,x)|&\leq \sum_{j}| d^{i}_{h,k,j}| |x^{\alpha_{j}}|x_{0}^{\beta_{j}}|\\
\nonumber
&\leq \tilde c\sum_{j}| d^{i}_{h,k,j}| d(x)^{|\alpha_j|_{\mathbb{G}}} |x_{0}^{\beta_{j}}|.
\end{align}
By the triangle inequality,
\begin{align}\label{seconda}
d(x)&\leq d(x,y_0)+d(x_0,y_0)+d(x_0)\\
\nonumber
&\leq r+d(x_0,y_0)+d(x_0).
\end{align}
Assume $d(x_{0},y_{0})<1$. Recall $c_{1}>0$ from \eqref{disugdist} with $K=\overline{B}(x_0,r+1)$ and let $C=1/c_{1}$. For any $h,k$,
\begin{align}\label{terza}
|x_ky_{0,h}-x_hy_{0,k}|&= |y_{0,h}(x_{k}-y_{0,k})+y_{0,k}(y_{0,h}-x_{h})|\\
&\leq 2|x-y_0||y_0| \nonumber\\
\nonumber
&\leq 2C d(x,y_0)|y_0|
\end{align}
and
\begin{align}\label{terza2}
|x_k(x_{0,h}-y_{0,h})-x_h(x_{0,k}-y_{0,k})|&\leq 2(|x-y_0|+|y_0|)|x_0-y_0|\\
\nonumber
&\leq 2C(Cd(x,y_0)+|y_0|)d(x_0,y_0).
\end{align}
Combining \eqref{prima},\eqref{seconda} and \eqref{terza}, we easily conclude 
\begin{equation}\label{55}
\sup_{x\in B(y_0,r)}\sum_{h,k}|x_ky_{0,h}-x_hy_{0,k}||\mathcal{R}^i_{h,k}(x_0,x)-\mathcal{R}^i_{h,k}(y_0,x)|\to 0\ \mbox{as}\ y_0\to x_0.
\end{equation}
Similarly using \eqref{prima2}, \eqref{seconda} and \eqref{terza2}, we get
\begin{equation}\label{56}
\sup_{x\in B(y_0,r)} \sum_{h,k} |\mathcal{R}^i_{h,k}(x_0,x)||x_k(x_{0,h}-y_{0,h})-x_h(y_{0,k}-x_{0,k})|\to 0\ \mbox{as}\ y_0\to x_0.
\end{equation}
Finally, combining \eqref{55} and \eqref{56} proves the claim.
\end{proof}

To prove the lemma, we use induction on the length $|J|_{\bbG}$ of $J$. If $|J|_{\B{G}}=0$ the result is clear. If $|J|_{\bbG}=1$, then $J=e_i$ for some $1\leq i\leq m$ where $m$ is the dimension of the first layer. By \eqref{exprecoord},
\[
|(x_0^{-1}x)^J-(y_0^{-1}x)^J| = |(x_i-x_{0,i})-(x_i-y_{0,i})|=|y_{0,i}-x_{0,i}|
\]
so the result is clear.

Before proving the induction step, we record one more estimate. Let $\overline C=2\tilde c C$. Let $1\leq i\leq N$ and $x\in B(y_{0},r)$. Using \eqref{defQi}, \eqref{prima2}, \eqref{terza} and \eqref{seconda}, we get
\begin{align}\label{789}
|\mathcal{Q}_i(-x_0,x)|&\leq \sum_{h,k} |\mathcal{R}_{h,k}^i(-x_0,x)||x_{0,h}x_k-x_{0,k}x_h|\\
\nonumber
&\leq \overline{C} \sum_{h,k}\sum_j |d_{h,k,j}^i| d(x)^{|\alpha_j|_{\mathbb{G}}}|x_0^{\beta_j}||x_0|d(x,x_0)\\
\nonumber
&\leq \overline{C} \sum_{h,k}\sum_j |d_{h,k,j}^i| (r+1+d(x_0))^{|\alpha_j|_{\mathbb{G}}}|x_0^{\beta_j}||x_0|(r+1).
\end{align}

Next we prove that if the lemma is true for every multi-index $J$ with $|J|_{\B{G}}\leq k-1$, then it also holds for every multi-index $J$ with $|J|_{\B{G}}=k$. Fix $J=(j_1,\dots, j_N)$ with $|J|_{\B{G}}=k$. Choose $i$ such that $j_i\neq 0$ and define $J'=J - e_i$. Clearly $|J'|_{\B{G}}=k-d_i$ where $d_i$ is the homogeneity of $e_i$ as defined at the beginning of Section \ref{section_dialtions}.
For every $x\in B(y_0,r)$, assuming $d(x_{0},y_{0})<1$,
\begin{align*}\label{567}
		&\big|(x_0^{-1}x)^J-(y_0^{-1}x)^J\big|\\
		& =\big| (x_0^{-1}x)^{J'}(x_{i}-x_{0,i}+\mathcal{Q}_i(-x_0,x))-(y_0^{-1}x)^{J'}(x_i-y_{0,i}+\mathcal{Q}_i(-y_0,x)) \big|\\
		& \leq
		\big|  (x_0^{-1}x)^{J'}(x_{i}-x_{0,i}+\mathcal{Q}_i(-x_0,x)) -  (y_0^{-1}x)^{J'}(x_{i}-x_{0,i}+\mathcal{Q}_i(-x_0,x))|\\
		&\qquad +| (y_0^{-1}x)^{J'}(x_{i}-x_{0,i}+\mathcal{Q}_i(-x_0,x)) - (y_0^{-1}x)^{J'}(x_{i}-y_{0,i}+\mathcal{Q}_i(-y_0,x)) \big|\\
		& \leq |x_{i}-x_{0,i}+\mathcal{Q}_i(-x_0,x)|\big| (x_0^{-1}x)^{J'}-(y_0^{-1}x)^{J'} \big| \\
		&\qquad + |(y_0^{-1}x)^{J'}|\big| y_{0,i}-x_{0,i}-\mathcal{Q}_i(-y_0,x)+\mathcal{Q}_i(-x_0,x)\big|\\
		& \leq Cd(x,x_0)\big| (x_0^{-1}x)^{J'}-(y_0^{-1}x)^{J'} \big| + \tilde c d(x,y_0)^{|J'|_{\mathbb{G}}}\big| y_{0,i}-x_{0,i}-\mathcal{Q}_i(-y_0,x)+\mathcal{Q}_i(-x_0,x)\big|\\
		&\qquad+|\mathcal{Q}_i(-x_0,x)|\big| (x_0^{-1}x)^{J'}-(y_0^{-1}x)^{J'} \big| \\
		&\leq C(r+1)\big| (x_0^{-1}x)^{J'}-(y_0^{-1}x)^{J'} \big|+\tilde c Cr^{|J'|_{\mathbb{G}}}d(x_0,y_0)+\tilde c r^{|J'|_{\mathbb{G}}}\big|\mathcal{Q}_i(-y_0,x)-\mathcal{Q}_i(-x_0,x)\big|\\
		&\qquad +|\mathcal{Q}_i(-x_0,x)|\big| (x_0^{-1}x)^{J'}-(y_0^{-1}x)^{J'} \big|.
		\end{align*}
The conclusion follows by combining the above estimate with the inductive assumption, Claim \ref{stimaQ}, \eqref{789}, and the fact that $y_0\to x_0$.
\end{proof}

\subsection{Statement and Reduction to Approximate Derivatives on $\bbG$}

The following proposition is the main result of this section. To avoid repeatedly writing factorials, we will denote $k$-approximate derivatives by $\sum_{|J|_{\mathbb{G}}\leq k} \alpha_J(x_0)(x_0^{-1}x)^J$ in this section only. Since this amounts to changing the coefficients $\alpha_J$ by a factor $J!$, clearly it does not affect whether they are measurable.

\begin{proposition}\label{diffmeas}
Let $D$ be a measurable subset of $\mathbb{G}$ and $k$ be a non-negative integer. Suppose $f\colon D\to \mathbb{R}$ is measurable and approximately differentiable of order $k$ at almost every point of $D$ with associated polynomial $\sum_{|J|_{\mathbb{G}}\leq k} \alpha_J(x_0)(x_0^{-1}x)^J$. Then the coefficients $\alpha_{J}$ are measurable functions on $D$ for $|J|_{\mathbb{G}}\leq k$.
\end{proposition}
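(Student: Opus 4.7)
The plan is induction on $k$. The base case $k = 0$ is immediate: $\alpha_{\underline{0}}(x_0) = \aplim_{x \to x_0} f(x)$, so $\alpha_{\underline{0}}$ equals the approximately continuous representative of $f$. For the inductive step, the truncation of the approximate $k$-derivative to homogeneous degree at most $k - 1$ is itself an approximate $(k-1)$-derivative (the degree-$k$ terms contribute $o(d(x, x_0)^{k-1})$), so by uniqueness (Proposition~\ref{structure_of_p}) and the inductive hypothesis the coefficients $\alpha_J$ are already known to be measurable for $|J|_{\bbG} \leq k - 1$. I would also reduce to the case $D = \bbG$ by extending $f$ by zero outside $D$ (this preserves the approximate derivative at density points of $D$), and to bounded $f$ by truncating to $\{|f| \leq \ell\}$ and exhausting $D$ with sublevel sets.

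It remains to show measurability of $\alpha_J$ for $|J|_{\bbG} = k$. Define the residual $\hat f(x_0, x) = f(x) - \sum_{|J|_{\bbG} \leq k - 1} \alpha_J(x_0)(x_0^{-1}x)^J$, jointly measurable in $(x_0, x)$ by Lemma~\ref{convergeindex} and the induction hypothesis. Write $\hat f(x_0, x) = P_k(x_0, x) + R(x_0, x)$ with $P_k(x_0, x) = \sum_{|J|_{\bbG} = k} \alpha_J(x_0)(x_0^{-1}x)^J$ and $\aplim_{x \to x_0} R(x_0, x)/d(x, x_0)^k = 0$. For each $m, n \in \bbN$ introduce the set $D_{m, n}$ consisting of those $x_0 \in D$ such that for every $r \in (0, 1/n]$ there exists a polynomial $P$ of homogeneous degree at most $k$ with rational coefficients satisfying $\mathcal{L}^N(\{x \in B(x_0, r) : |\hat f(x_0, x) - P(x_0^{-1}x)| > r^k/m\}) \leq V r^Q/m$. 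This set is measurable, since the inner existential quantifier runs over a countable family and the measure condition is measurable in $x_0$ by Fubini. Approximate $k$-differentiability combined with rational approximation of the coefficients of $P_k$ (at each fixed scale $r$) gives $D \subset \bigcup_n D_{m, n}$ up to a null set, for each fixed $m$. A measurable selection --- say the lexicographically smallest admissible rational polynomial --- produces $P^{(m)}_{x_0, r}$ with coefficients $c_J^{(m)}(x_0, r)$ measurable in $x_0$.

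The identification step uses the De Giorgi lemma (Lemma~\ref{degiorgi_carnot}). For $x_0 \in D_{m, n}$ and sufficiently small $r$, both $P^{(m)}_{x_0, r}(x_0^{-1}\cdot)$ and $P_k(x_0, \cdot)$ approximate $\hat f$ within $r^k/m$ on sets of density at least $1 - 1/m$ in $B(x_0, r)$, so their difference is at most $2 r^k/m$ on a set of measure at least $V r^Q/2$. The De Giorgi lemma then gives $|X^\alpha(P^{(m)}_{x_0, r}(x_0^{-1}\cdot) - P_k(x_0, \cdot))(x_0)| \leq C r^{k - |\alpha|_{\bbG}}/m$ for all $|\alpha|_{\bbG} \leq k$, and inverting the linear relation \eqref{def_a_J} between these horizontal derivatives at $x_0$ and the polynomial coefficients yields $|c_J^{(m)}(x_0, r) - \alpha_J(x_0)| \leq C/m$ when $|J|_{\bbG} = k$. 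Choosing $r = 1/T_m(x_0)$, where $T_m(x_0)$ is the smallest $n$ beyond which both membership in $D_{m, n}$ and the approximate-derivative inequality at density $\geq 1 - 1/m$ hold (a measurable stopping time, finite almost everywhere), the measurable function $x_0 \mapsto c_J^{(m)}(x_0, 1/T_m(x_0))$ converges pointwise to $\alpha_J(x_0)$ as $m \to \infty$, whence $\alpha_J$ is measurable on $D$.

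The main obstacle is the absence of a $C^k_{\bbG}$ Lusin approximation for $\mathrm{Lip}(k, \bbG)$ maps in Carnot groups --- in the Euclidean setting such a tool identifies the approximate derivative coefficients with genuine derivatives of a smooth function on a large set, making measurability automatic. Lemma~\ref{degiorgi_carnot} plays the surrogate role of converting density-positive $L^1$-closeness of polynomials into closeness of all their coefficients at the center. The stratification into $D_{m, n}$ is essential because the density convergence rate in approximate differentiability is not uniform in $x_0$; within each stratum the rate is uniform, allowing the De Giorgi estimate to close. The measurable selection via rational coefficients avoids general measurable-selection technicalities, and the diagonal procedure combining $r \to 0$ and $m \to \infty$ completes the pointwise identification of $\alpha_J$ as a measurable function.
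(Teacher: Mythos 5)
Your proposal shares the paper's foundational reductions (extending to $D=\bbG$, truncating to bounded $f$, induction on $k$, with the same base case via approximate continuity), but the core argument for the top-degree coefficients is genuinely different. The paper does \emph{not} use the De Giorgi lemma (Lemma~\ref{degiorgi_carnot}) in its proof of Proposition~\ref{diffmeas}; that lemma appears only later in the Whitney-extension part of Lemmas~\ref{mainthma1} and \ref{mainthmb1}. Instead, the paper fixes a closed bounded rational target box $B_J$ for each top-order coefficient, encodes $a_J(x_0)\in B_J$ by countable quantifiers (Lemma~\ref{decomposition}), invokes Lusin's theorem to restrict to compact sets $A$ on which the already-measurable lower-order coefficients are \emph{continuous}, and then proves measurability of $\phi(x_0)=\mathcal{L}^{N}\{x\in B(x_0,r):Q(x,x_0)>\varepsilon\}$ by showing its superlevel sets are \emph{open} in $A$ — which is why Lemmas~\ref{convergedistance} and \ref{convergeindex} enter (they quantify how $Q(x,x_0)$ varies as $x_0$ moves). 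Your route replaces the openness argument with a measurable selection of rational approximating polynomials plus a De Giorgi estimate to pin down the coefficients, followed by a diagonal limit in $m$. What your approach buys is a more direct identification mechanism (De Giorgi converts density-positive pointwise closeness of two polynomials into closeness of all their coefficients) and it dispenses with the Lusin/continuity step; what the paper's approach buys is that the set inside $\mathcal{L}^N(\cdot)$ in the decomposition lemma is independent of $r$, so the continuity-in-$r$ needed to pass from rational to all $r$ is immediate. In your version the threshold $r^k/m$ itself depends on $r$, so the uncountable quantifier ``for every $r\in(0,1/n]$'' does not reduce cleanly to rational $r$ by continuity; this is fixable (e.g. use $(1/m)d(x,x_0)^k$ in place of $r^k/m$, or prove one-sided semicontinuity), but it is a real wrinkle worth addressing. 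Two smaller points: Lemma~\ref{convergeindex} is a convergence estimate, not what justifies joint measurability of $\hat f$ — that follows from continuity of $(x_0,x)\mapsto (x_0^{-1}x)^J$ together with measurability of $\alpha_J$ via the Carathéodory-type result cited as \cite[Lemma~4.15]{AB}; and ``lexicographically smallest rational polynomial'' should be ``first admissible polynomial in a fixed enumeration of rational tuples,'' since $\bbQ^d$ has no lexicographic least elements. With these corrections your argument is a valid alternative to the paper's.
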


Before proving Proposition \ref{diffmeas}, we observe how it easily implies that coefficients of approximate Taylor polynomials are also measurable.

\begin{corollary}\label{taylormeas}
Let $D$ be a measurable subset of $\mathbb{G}$ and $k$ is a positive integer. Suppose $f\colon D\to \mathbb{R}$ is measurable and has an approximate $(k-1)$-Taylor polynomial at almost every point of $D$, denoted by $\sum_{|J|_{\mathbb{G}}\leq k-1} \alpha_J(x_0)(x_0^{-1}x)^J$. Then the coefficients $\alpha_{J}$ are measurable functions on $D$ for $|J|_{\mathbb{G}}\leq k-1$.
\end{corollary}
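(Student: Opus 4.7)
The plan is to reduce the corollary directly to Proposition \ref{diffmeas} via the bridge already established in Lemma \ref{approxtaylor}. Concretely, the hypothesis gives that at almost every $x_0\in D$ the function $f$ admits an approximate $(k-1)$-Taylor polynomial $p(x_0,x)=\sum_{|J|_{\bbG}\le k-1}\alpha_J(x_0)(x_0^{-1}x)^J$. By Lemma \ref{approxtaylor}, at each such $x_0$ the function $f$ is also approximately differentiable of order $(k-1)$ with the same polynomial $p(x_0,x)$. By the uniqueness statement in Proposition \ref{structure_of_p}(2) (applied with $k$ replaced by $k-1$), the coefficients of the approximate derivative of order $(k-1)$ are unique, so they agree with the $\alpha_J$ coming from the Taylor polynomial.

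Now apply Proposition \ref{diffmeas} with $k$ replaced by $k-1$: since $f$ is approximately differentiable of order $(k-1)$ almost everywhere on $D$, each coefficient $\alpha_J$ with $|J|_{\bbG}\le k-1$ is a measurable function on $D$. This gives exactly the desired conclusion for all the Taylor coefficients.

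There is essentially no obstacle here: the work is entirely contained in Proposition \ref{diffmeas} and in the comparison lemma, so the corollary amounts to invoking them in sequence. The only minor point worth recording in the write-up is the indexing shift (going from order $k$ in Proposition \ref{diffmeas} to order $k-1$ for the Taylor version) and the fact that the coefficient identification between the approximate Taylor polynomial and the approximate derivative of one lower order is forced by the uniqueness result of Proposition \ref{structure_of_p}.
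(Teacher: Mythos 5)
Your argument is correct and follows the same route as the paper: invoke Lemma \ref{approxtaylor} to pass from the approximate $(k-1)$-Taylor polynomial to approximate differentiability of order $k-1$ with the same polynomial, then apply Proposition \ref{diffmeas} with $k$ replaced by $k-1$. Your explicit appeal to the uniqueness statement in Proposition \ref{structure_of_p}(2) to identify the coefficients is a small clarifying addition that the paper leaves implicit, but it does not change the substance of the argument.
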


\begin{proof}[Proof of Corollary \ref{taylormeas} from Proposition \ref{diffmeas}]
Suppose $f\colon D\to \mathbb{R}$ is measurable with approximate $(k-1)$-Taylor polynomial $p(x_{0},x)=\sum_{|J|_{\mathbb{G}}\leq k-1} \alpha_J(x_0)(x_0^{-1}x)^J$ at almost every $x_{0}\in D$. By Lemma \ref{approxtaylor}, $f$ is approximately differentiable of order $k-1$ at almost every $x_{0}\in D$ with polynomial $p(x_{0},x)$ of homogeneous degree at most $k-1$. By Proposition \ref{diffmeas}, it follows that the coefficients $\alpha_{J}$ are measurable on $D$ for $|J|_{\mathbb{G}}\leq k-1$.
\end{proof}

We use the rest of this section to prove Proposition \ref{diffmeas}.

%
%

\subsection{Reduction to Bounded Functions with Domain $\bbG$}

\begin{lemma}
Suppose Proposition \ref{diffmeas} holds in the special case $D=\bbG$. Then it holds for general measurable $D\subset \bbG$.
\end{lemma}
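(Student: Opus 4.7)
The plan is to extend $f$ to all of $\bbG$ and reduce to the special case. Define $\tilde{f} \colon \bbG \to \bbR$ by $\tilde{f} = f$ on $D$ and $\tilde{f} = 0$ on $\bbG \setminus D$; since $D$ is measurable, so is $\tilde{f}$. The goal is to verify that $\tilde{f}$ is approximately differentiable of order $k$ almost everywhere on $\bbG$, apply Proposition \ref{diffmeas} with domain $\bbG$, and then transfer measurability of the coefficients back to $D$.

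First I would show that at almost every $x_0 \in D$, namely at density points of $D$ at which $f$ is approximately differentiable of order $k$, the extension $\tilde{f}$ is approximately differentiable of order $k$ on $\bbG$ with the same polynomial $p(x_0,x) = \sum_{|J|_{\mathbb{G}}\leq k}\alpha_J(x_0)(x_0^{-1}x)^J$. For every $\varepsilon>0$, the set
\[ E_{\varepsilon} = \{x \in \bbG : |\tilde{f}(x) - p(x_0,x)| > \varepsilon d(x,x_0)^k\} \]
is contained in $(\bbG \setminus D) \cup \{x \in D : |f(x) - p(x_0,x)| > \varepsilon d(x,x_0)^k\}$. The first piece has density zero at $x_0$ because $x_0$ is a density point of $D$, and the second has density zero by approximate differentiability of $f$ at $x_0$, so $E_{\varepsilon}$ has density zero at $x_0$, as required.

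Next, at almost every $x_0 \in \bbG \setminus D$, namely at density points of $\bbG \setminus D$, the function $\tilde{f}$ equals zero in density around $x_0$, so $\tilde{f}$ is approximately differentiable of order $k$ at $x_0$ with the zero polynomial. Combining the two cases, $\tilde{f}$ is approximately differentiable of order $k$ at almost every point of $\bbG$, and the assumed special case of Proposition \ref{diffmeas} yields measurable coefficient functions $\tilde\alpha_J$ on $\bbG$. By the uniqueness statement in Proposition \ref{structure_of_p}, at almost every $x_0 \in D$ the two polynomials must coincide, so $\tilde\alpha_J(x_0) = \alpha_J(x_0)$ almost everywhere on $D$, and hence $\alpha_J$ is measurable on $D$.

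The only slightly delicate step is the second paragraph: one needs the density-one property of $D$ at $x_0$ precisely to absorb the extension region $\bbG \setminus D$ into the negligible set for the approximate limit, and then the uniqueness of approximate derivatives to identify the coefficients of $\tilde{f}$ and $f$ on $D$. Beyond this bookkeeping there is no real obstacle.
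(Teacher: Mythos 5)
Your proof is correct and follows the same strategy as the paper's: extend $f$ by zero to $\bbG$, verify that the extension remains $k$-approximately differentiable almost everywhere with the same polynomial at density points of $D$, invoke the $D=\bbG$ case, and use uniqueness (Proposition \ref{structure_of_p}) to transfer measurability of the coefficients back to $D$. You have supplied a few details (the two-case density argument, the explicit appeal to uniqueness) that the paper leaves implicit, but the argument is the same.
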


\begin{proof}
Suppose Proposition \ref{diffmeas} holds in the case when the domain is $\bbG$. Let $D\subset \bbG$ be an arbitrary measurable set. Let $f\colon D \to \bbR$ be measurable and $k$-approximately differentiable at almost every point of $D$ with polynomial $p(x,y)$ in $y$ at almost every point $x\in D$. Define $F\colon \bbG\to \bbR$ by $F|_{D}=f$ and $F$ identically zero on $G\setminus D$. Clearly $F$ is measurable. The map $F$ is $k$-approximately differentiable almost everywhere on $\bbG$. If we denote the associated polynomial at a point $x$ by $q(x,y)$, then $q(x,y)=p(x,y)$ and the coefficients agree for almost every $x\in D$. By Proposition \ref{diffmeas} in the case when the domain is $\bbG$, the coefficients of $q(x,y)$ are measurable on $\bbG$. Hence the coefficients of $p(x,y)$ are measurable on $D$.
\end{proof}

From now on we work with measurable functions whose domain is $\bbG$. The following easy lemma will be useful in reducing the proof of Proposition \ref{diffmeas} to a simpler case.

\begin{lemma}\label{consistent}
Suppose $f,g\colon \bbG\to \bbR$ are measurable and $x_{0}$ is a density point of $\{x\in \bbG:f(x)=g(x)\}$. Suppose $f$ is $k$-approximately differentiable at $x_{0}$ with $k$-approximate derivative $p(x_{0},x)$. Then $g$ is also $k$-approximately differentiable at $x_{0}$ with the same $k$-approximate derivative $p(x_{0},x)$.
\end{lemma}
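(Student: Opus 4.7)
The plan is to unwind the definition of $\aplim$ in terms of density and exploit the fact that a union of two sets of density zero still has density zero. Set $E = \{x \in \bbG : f(x) = g(x)\}$, so by hypothesis $E$ has density one at $x_0$, equivalently $\bbG \setminus E$ has density zero at $x_0$.

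To show $g$ is $k$-approximately differentiable at $x_0$ with the same polynomial $p(x_0, x)$, I need to verify
\[\aplim_{x\to x_0}\frac{|g(x)-p(x_0,x)|}{d(x,x_0)^k}=0,\]
i.e., that for every $\varepsilon > 0$ the set
\[A_\varepsilon := \left\{x \in \bbG : \frac{|g(x)-p(x_0,x)|}{d(x,x_0)^k} > \varepsilon\right\}\]
has density zero at $x_0$. From the hypothesis on $f$, the analogous set
\[B_\varepsilon := \left\{x \in \bbG : \frac{|f(x)-p(x_0,x)|}{d(x,x_0)^k} > \varepsilon\right\}\]
has density zero at $x_0$. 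The key observation is that on $E$ we have $f = g$, so $A_\varepsilon \cap E \subseteq B_\varepsilon \cap E$, which gives the inclusion
\[A_\varepsilon \subseteq B_\varepsilon \cup (\bbG \setminus E).\]
Since both $B_\varepsilon$ and $\bbG \setminus E$ have density zero at $x_0$, so does their union, and hence so does $A_\varepsilon$. This yields the claim.

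There is no real obstacle here; the argument is a direct density computation once one writes out the definitions. The only point worth a brief check is that the $k$-approximate differentiability of $g$ at $x_0$ requires $x_0$ to be a density point of the domain of $g$ (which is all of $\bbG$, so this is automatic) and that the candidate polynomial $p(x_0, x)$ is a polynomial centered at $x_0$ of homogeneous degree at most $k$ — both are inherited from the hypothesis on $f$. Uniqueness of the resulting polynomial follows from Proposition \ref{structure_of_p}.
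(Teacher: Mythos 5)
Your proof is correct and takes essentially the same route as the paper: the paper uses a triangle-inequality shorthand for $\aplim$, splitting $|g-p|\le|f-p|+|g-f|$ and noting that both resulting approximate limits vanish, which is exactly the density argument you carry out explicitly. Unwinding to the level of the sets $A_\varepsilon$, $B_\varepsilon$, and $\bbG\setminus E$ is a harmless change of exposition, not a different method.
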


\begin{proof}
Clearly
\begin{equation}
\begin{split}
\aplim_{x\to x_0}\frac{|g(x)-p(x_0,x)|}{d(x,x_0)^k}&\leq \aplim_{x\to x_0}\frac{|f(x)-p(x_0,x)|}{d(x,x_0)^k}+\aplim_{x\to x_0}\frac{|g(x)-f(x)|}{d(x,x_0)^k}\\
&=\aplim_{x\to x_0}\frac{|g(x)-f(x)|}{d(x,x_0)^k}.
\end{split}
\end{equation}
Since $x_0$ is a density point of $\{x\in \bbG:f(x)=g(x)\}$ we have
\begin{equation}
\aplim_{x\to x_0}\frac{|g(x)-f(x)|}{d(x,x_0)^k}=0.
\end{equation}
This proves the lemma.
\end{proof}

We next show that we can assume $f$ is bounded. Given $f\colon \bbG\to \bbR$ and $h\in \bbN$, define $f_{h}(x)=f(x)$ if $-h<f(x)<h$, $f_{h}(x)=h$ if $f(x)\geq h$, $f_{h}(x)=-h$ if $f(x)\leq -h$. Clearly $f_{h}$ is measurable whenever $f$ is measurable. The following lemma follows from Lemma \ref{consistent}.

\begin{lemma}
Suppose $f\colon \bbG\to \bbR$ is measurable and $k$-approximately differentiable almost everywhere. Then $f_{h}$ is $k$-approximately differentiable almost everywhere for every $h\in \bbN$. The coefficients of the $k$-approximate derivatives of $f_{h}$ and $f$ respectively satisfy $a_{J}^{h}(x_{0})=a_{J}(x_{0})$ for $|J|\leq k$ whenever $x_{0}$ is a density point of $\{x\in \bbR^{n}: -h<f(x)<h\}$.
\end{lemma}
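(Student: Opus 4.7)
The plan is to decompose $\bbG$ according to the truncation and apply Lemma \ref{consistent} piecewise. Set
\[
E_h^{0} := \{x\in\bbG : -h < f(x) < h\}, \quad E_h^{+} := \{x : f(x)\geq h\}, \quad E_h^{-} := \{x : f(x)\leq -h\},
\]
all of which are measurable since $f$ is. They partition $\bbG$, and by the Lebesgue differentiation theorem in the doubling metric measure space $(\bbG,d,\mathcal{L}^N)$, almost every point of each of these sets is a density point of that set.

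First I would handle $E_h^{0}$. On $E_h^{0}$ we have $f_h = f$ by construction of the truncation. By hypothesis, $f$ is $k$-approximately differentiable at almost every $x_0\in E_h^{0}$ with polynomial $p(x_0,x) = \sum_{|J|_{\bbG}\leq k} a_J(x_0)(x_0^{-1}x)^J$. For such $x_0$ which is also a density point of $E_h^{0}$, Lemma \ref{consistent} (applied with $g=f_h$) gives that $f_h$ is $k$-approximately differentiable at $x_0$ with the same polynomial $p(x_0,x)$. By the uniqueness of the coefficients of approximate derivatives (Proposition \ref{structure_of_p}), this forces $a_J^h(x_0) = a_J(x_0)$ for all $|J|_{\bbG}\leq k$, which is precisely the asserted identity at every density point of $\{-h<f<h\}$.

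Next I would dispose of the other two pieces. On $E_h^{+}$ the function $f_h$ agrees identically with the constant function $g\equiv h$, which is trivially $k$-approximately differentiable at every point with polynomial equal to $h$ (and all higher coefficients zero). At any density point $x_0$ of $E_h^{+}$, Lemma \ref{consistent} then yields that $f_h$ is $k$-approximately differentiable at $x_0$. The identical argument with $g\equiv -h$ handles $E_h^{-}$. Combining the three cases, $f_h$ is $k$-approximately differentiable at almost every point of $\bbG$.

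There is no real obstacle here; the statement is essentially a bookkeeping consequence of the local nature of approximate differentiability, already encoded in Lemma \ref{consistent}, together with the uniqueness of coefficients established in Proposition \ref{structure_of_p}. The only point that requires any care is ensuring that on each of the three pieces the comparison function (either $f$ itself or a constant) is known to be $k$-approximately differentiable at the relevant density points, which is immediate in all three cases.
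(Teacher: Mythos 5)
Your proposal is correct and is essentially the argument the paper intends: the paper's proof consists of the single line ``follows from Lemma \ref{consistent},'' and your decomposition of $\bbG$ into $\{-h<f<h\}$, $\{f\geq h\}$, $\{f\leq -h\}$ together with a piecewise application of that lemma (comparing $f_h$ to $f$ on the first piece and to the constants $\pm h$ on the others) is the natural way to carry that out. The appeal to Proposition \ref{structure_of_p} for the coefficient identity is slightly redundant, since Lemma \ref{consistent} already asserts the two functions share the \emph{same} approximate derivative polynomial, but this does no harm.
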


The following lemma shows it suffices to prove Proposition \ref{diffmeas} for bounded $f$, in addition to already assuming the domain is $\bbG$.

\begin{lemma}\label{boundedisok}
Suppose $f\colon \bbG\to \bbR$ is measurable and $k$-approximately differentiable almost everywhere. Assume the coefficients $a_{J}^{h}$ of the $k$-approximate derivatives of $f_{h}$ are measurable for $|J|\leq k$ and every $h\in \bbN$. Then the coefficients $a_{J}$ of the $k$-approximate derivatives of $f$ are measurable for $|J|\leq k$.
\end{lemma}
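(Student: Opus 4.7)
The plan is to patch together the (assumed measurable) coefficients $a_J^h$ associated to $f_h$ into the coefficients $a_J$ associated to $f$, using the consistency statement in the preceding unnumbered lemma. First I would introduce the measurable sublevel sets $E_h := \{x \in \bbG : -h < f(x) < h\}$ and note that, since $f$ is finite-valued, $\bigcup_{h \in \bbN} E_h = \bbG$.

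Next I would invoke the Lebesgue differentiation theorem in the doubling metric measure space $(\bbG, d, \mathcal{L}^N)$, which was recalled just before Definition \ref{apTaylorapDiif}, to obtain a measurable subset $D_h \subset E_h$ of density points of $E_h$ with $\mathcal{L}^N(E_h \setminus D_h) = 0$. The set $D_h$ is measurable because the density condition can be written as the limit, as $r \downarrow 0$ through a countable sequence, of the continuous ratios $\mathcal{L}^N(B(x,r) \cap E_h)/\mathcal{L}^N(B(x,r))$. Together with $\bigcup_h E_h = \bbG$, this gives $\mathcal{L}^N\bigl(\bbG \setminus \bigcup_h D_h\bigr) = 0$. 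By the hypothesis of the lemma and the preceding lemma relating $a_J$ and $a_J^h$, at almost every point $x_0 \in D_h$ one has $a_J(x_0) = a_J^h(x_0)$ for every $|J|_{\bbG} \leq k$.

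Finally, to assemble a measurable representative of $a_J$ on all of $\bbG$, I would form the disjointification $F_h := D_h \setminus \bigcup_{j < h} D_j$ (with the convention $F_1 = D_1$). Each $F_h$ is measurable, the $F_h$'s are pairwise disjoint, and $\bigcup_h F_h = \bigcup_h D_h$ has full measure in $\bbG$. I would then define
\[
\tilde a_J(x) := \sum_{h = 1}^{\infty} a_J^h(x)\, \mathbf{1}_{F_h}(x),
\]
which is measurable as a countable sum of products of measurable functions with indicator functions of measurable sets. By the previous paragraph, $\tilde a_J = a_J$ outside a set of measure zero (namely the null set where either $f$ fails to be $k$-approximately differentiable, or $x \notin \bigcup_h D_h$, or $a_J(x) \neq a_J^h(x)$ for the relevant $h$). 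Since Lebesgue measurability is preserved under modification on a null set, this shows $a_J$ is measurable, completing the proof.

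I do not anticipate a genuine obstacle: the content of the argument is entirely in the previous lemma together with the Lebesgue density theorem, and the remaining work is the standard bookkeeping above. The only minor point to be careful about is the measurability of $D_h$, which is handled by writing the density condition as a countable $\liminf$ of continuous functions of $x$.
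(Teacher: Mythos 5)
Your proposal is correct and follows essentially the same route as the paper: both introduce the sublevel sets $\{-h<f<h\}$, pass to their density points via the Lebesgue density theorem, and invoke the preceding lemma identifying $a_J^h$ with $a_J$ there. The only cosmetic difference is in the final assembly — you disjointify the density sets and write $a_J$ as a measurable sum $\sum_h a_J^h\,\mathbf{1}_{F_h}$, whereas the paper observes that the increasing structure of the sublevel sets gives $a_J^h \to a_J$ almost everywhere and concludes measurability as an a.e.\ pointwise limit; both are valid and entirely equivalent.
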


\begin{proof}
Let $A_{h}=\{x\in \bbG: -h<f(x)<h\}$ and let $B_{h}$ be the set of density points of $A_{h}$. Clearly $\cup_{m=1}^{\infty}A_{h}=\bbG$ and $\mathcal{L}^{N}(A_{h}\setminus B_{h})=0$ for all $h$ by the Lebesgue density theorem. Define $S=\cup_{h=1}^{\infty}B_{h}$. Then
$\mathcal{L}^{N}(\bbG\setminus S)=0.$

Let $x_{0}\in S$. Then $x_{0}\in B_{T}$ for some $T\geq 1$. Since $A_{h}$ and hence $B_{h}$ are increasing sequences of sets, it follows that $x_{0}\in B_{h}$ for all $h\geq T$. By the previous lemma, $a_{J}^{h}(x_{0})=a_{J}(x_{0})$ for all $|J|\leq k$ and $h\geq T$. In particular, $a_{J}^{h}(x_{0})\to a_{J}(x_{0})$ for $|J|\leq k$. Since this holds for any $x_{0}\in S$ and $\mathcal{L}^{N}(\bbG\setminus S)=0$, it follows $a_{J}^{h}\to a_{J}$ almsot everywhere for $|J|\leq k$. Hence $a_{J}$ are measurable for $|J|\leq k$.
\end{proof}

\subsection{Set Decomposition}

We now prove Proposition \ref{diffmeas} assuming $D=\bbG$ and $f$ is bounded. We first encode the conditions for the coefficients of the $k$ derivatives to lie inside an interval in terms of simpler sets with countable quantifiers.
 
\begin{lemma}\label{decomposition}
	Suppose measurable $f\colon \B{G}\to\B{R}$ is $k$-approximately differentiable at a point $x_0\in\B{G}$ with $k$-approximate derivative $\sum_{|J|_{\mathbb{G}}\leq k}a_J(x_0)(x_0^{-1}x)^J$. Let $B_J\subset\B{R}$, $|J|_{\mathbb{G}}= k$, be non-empty closed bounded intervals. Then the following are equivalent:
	\begin{enumerate}
		\item $a_J(x_0)\in B_J$ for all $|J|_{\mathbb{G}}= k$.
		\item For each $\varepsilon \in\B{Q}^+$, there exist $R\in\B{Q}^+$ and $q_J\in B_J\cap \B{Q}$ for $|J|_{\mathbb{G}}=k$ such that for all $r\in (0,R)\cap\B{Q}$, 
		\begin{align*}
		&\mathcal{L}^{N} \Big\{ x\in B(x_0,r): \frac{|f(x)-\sum_{|J|_{\mathbb{G}}\leq k-1}a_J(x_0)(x_0^{-1}x)^J-\sum_{|J|_{\mathbb{G}}= k}q_J(x_0^{-1}x)^J|}{d(x,x_0)^k}>\varepsilon \Big\}\\
		&\qquad <\varepsilon r^Q.
		\end{align*}
	\end{enumerate}
\end{lemma}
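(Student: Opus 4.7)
The plan is to prove both implications by comparing the $k$-approximate derivative polynomial $p(x_0,\cdot)$ of $f$ with a perturbed polynomial $\tilde p(x_0,\cdot)$ obtained by replacing the top-degree coefficients $\{a_J(x_0)\}_{|J|_{\mathbb{G}}=k}$ by rational numbers $\{q_J\}_{|J|_{\mathbb{G}}=k}\subset B_J$. The forward direction is a direct density argument, while the converse applies the De Giorgi estimate of Lemma \ref{degiorgi_carnot}.

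For $(1)\Rightarrow(2)$, given $\varepsilon\in\B{Q}^+$, I would use density of $\B{Q}$ in each (tacitly non-degenerate) interval $B_J$ to choose $q_J\in B_J\cap\B{Q}$ with $\max_J|q_J-a_J(x_0)|$ so small that, by Lemma \ref{dxj},
\[
|\tilde p(x_0,x)-p(x_0,x)|=\Big|\sum_{|J|_{\mathbb{G}}=k}(q_J-a_J(x_0))(x_0^{-1}x)^J\Big|\leq \tfrac{\varepsilon}{2}\,d(x,x_0)^k
\]
for every $x\in\bbG$. Consequently the set in (2) is contained in $\{x:|f(x)-p(x_0,x)|>(\varepsilon/2)d(x,x_0)^k\}$, which has density zero at $x_0$ by the $k$-approximate differentiability of $f$. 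Hence there is $R\in\B{Q}^+$ such that the inequality in (2) holds for every rational $r\in(0,R)$.

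For $(2)\Rightarrow(1)$, I would apply (2) with $\varepsilon_n=1/n$ for each $n\in\B{N}$ to obtain $q_J^{(n)}\in B_J\cap\B{Q}$ and $R_n\in\B{Q}^+$, and consider
\[
P_n(x):=\tilde p_n(x_0,x)-p(x_0,x)=\sum_{|J|_{\mathbb{G}}=k}(q_J^{(n)}-a_J(x_0))(x_0^{-1}x)^J.
\]
By \eqref{exprecoord}--\eqref{defQi}, each coordinate $(x_0^{-1}x)_i$ is a polynomial in $x$ of homogeneous degree at most $d_i$, so $P_n$ is a polynomial of homogeneous degree at most $k$. For $n$ large enough that $\varepsilon_n<V/4$ and $r\in\B{Q}\cap(0,R_n)$ small enough in terms of the rate of approximate differentiability of $f$ at $x_0$, combining the bound in (2) with the density-zero conclusion from $k$-approximate differentiability, applied with threshold $\eta=\varepsilon_n$, produces a measurable set $E_r\subset B(x_0,r)$ with $\mathcal{L}^{N}(E_r)\geq (V/2)r^Q$ on which $|P_n(x)|\leq 2\varepsilon_n r^k$. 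Lemma \ref{degiorgi_carnot} then gives, for every multi-index $|K|_{\mathbb{G}}=k$,
\[
|X^K P_n(x_0)|\leq \frac{C}{r^{Q+k}}\int_{E_r}|P_n(y)|\dd y\leq 2CV\varepsilon_n,
\]
with $C$ independent of $n$. By Proposition \ref{dxa}, $P_n$ coincides with its own $k$-Taylor polynomial at $x_0$, so \eqref{def_a_J} expresses each $q_J^{(n)}-a_J(x_0)$ as a fixed linear combination of the derivatives $\{X^K P_n(x_0)\}_{|K|_{\mathbb{G}}=k}$. Thus $q_J^{(n)}\to a_J(x_0)$ as $n\to\infty$, and the closedness of $B_J$ forces $a_J(x_0)\in B_J$.

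The main technical obstacle is the dual task of certifying that $P_n$ has homogeneous degree at most $k$ as a polynomial in $x$ (so that Lemma \ref{degiorgi_carnot} applies) and of inverting \eqref{def_a_J} on the top homogeneous stratum to recover the coefficients $q_J^{(n)}-a_J(x_0)$ from $X^K P_n(x_0)$; this inversion is available because the relevant $|J|_{\mathbb{G}}=|K|_{\mathbb{G}}=k$ block of $(\beta_{JK})$ is upper-triangular with unit diagonal, a fact built into the conventions of \eqref{def_a_J}.
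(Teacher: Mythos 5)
Both implications in your proposal are sound. The forward direction $(1)\Rightarrow(2)$ matches the paper's argument: choose rational $q_J\in B_J$ with $|q_J-a_J(x_0)|$ small enough that Lemma~\ref{dxj} controls the perturbation by $(\varepsilon/2)d(x,x_0)^k$, then invoke the definition of $k$-approximate differentiability.

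For $(2)\Rightarrow(1)$, however, you take a genuinely different route. The paper's proof is ``soft'': extract a convergent subsequence $q_J^n\to q_J\in B_J$ by compactness of $B_J$, upgrade the inequality from rational $r$ to all $r\in(0,R_n)$ by continuity in $r$, and conclude that $f$ is $k$-approximately differentiable at $x_0$ with top coefficients $q_J$; the identification $q_J=a_J(x_0)$ then rests on the uniqueness result Proposition~\ref{structure_of_p}. Your approach is quantitative: you feed the difference polynomial $P_n=\sum_{|J|_\bbG=k}(q_J^{(n)}-a_J(x_0))(x_0^{-1}x)^J$ into Lemma~\ref{degiorgi_carnot} on a suitable ``good'' set of measure $\geq(V/2)r^Q$ (obtained by intersecting the complements of the two bad sets), getting $|X^K P_n(x_0)|\lesssim\varepsilon_n$ for $|K|_\bbG=k$, and then read off $q_J^{(n)}-a_J(x_0)$ via \eqref{def_a_J} and Proposition~\ref{dxa}. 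This bypasses both the compactness/continuity step and the uniqueness proposition, at the cost of the De Giorgi machinery; each version buys the same conclusion, but yours produces an explicit rate $|q_J^{(n)}-a_J(x_0)|=O(\varepsilon_n)$. Your argument is correct in its essentials; two small points worth tidying are (i) the claim that $(x_0^{-1}x)^J$ has homogeneous degree $\leq|J|_\bbG$ as a polynomial in $x$ does follow from \eqref{exprecoord}--\eqref{defQi} and Lemma~\ref{polyfacts}, since $\mathcal{Q}_i(-x_0,x)$ as a polynomial in $x$ contains only monomials of homogeneous degree $\leq d_i$, and (ii) you do not actually need to \emph{invert} \eqref{def_a_J} --- that formula already expresses the Taylor coefficient $\alpha_J^{P_n}(x_0)=J!(q_J^{(n)}-a_J(x_0))$ as a linear combination of $\{X^K P_n(x_0)\}_{|K|_\bbG=k}$, which is exactly the direction you need, so the upper-triangularity remark in your closing paragraph, while true, is superfluous.
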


\begin{proof}
Let $T$ be the number of multi-indices $J$ such that $|J|_{\mathbb{G}}=k$. 

We first prove $(1)\Rightarrow (2)$. Fix $\varepsilon\in\B{Q}^+$. For each $J$ such that $|J|_{\mathbb{G}}=k$, choose $q_J\in B_J\cap \B{Q}$ such that $|q_{J}-a_J(x_0)|< \varepsilon / (2\tilde cT)$, where $\tilde c>0$ is as in Lemma \ref{dxj}. Using the definition of $k$-approximate differentiability, making $R$ slightly smaller if necessary to make it rational, there exists $R\in\B{Q}^+$ such that the following holds. For all $0<r<R$, and so in particular for all $r\in(0,R)\cap\B{Q}$,
\begin{equation}\label{measure_f_minus_approx}
\mathcal{L}^{N} \Big\{ x\in B(x_0,r): \frac{|f(x)-\sum_{|J|_{\mathbb{G}}\leq k}a_J(x_0)(x_0^{-1}x)^J|}{d(x,x_0)^k}>\varepsilon / 2 \Big\} <\varepsilon r^Q.
\end{equation}
However,
\begin{align*}
&\frac{|\sum_{|J|_{\mathbb{G}}=k}a_J(x_0)(x_0^{-1}x)^J - \sum_{|J|_{\mathbb{G}}=k}q_J(x_0^{-1}x)^J|}{d(x,x_0)^k}\\
&\qquad \leq \sum_{|J|_{\mathbb{G}}=k}|a_J(x_0)-q_J|\frac{|(x_0^{-1}x)^J|}{d(x,x_0)^k}\\
&\qquad \leq \tilde cT\cdot \frac{\varepsilon}{2\tilde cT}\frac{d(x,x_0)^k}{d(x,x_0)^k}\\
&\qquad = \varepsilon/2.
\end{align*}
Combining this with \eqref{measure_f_minus_approx}, we obtain
\begin{equation*}
\begin{split}
&\mathcal{L}^{N} \Big\{ x\in B(x_0,r): \frac{|f(x)-\sum_{|J|_{\mathbb{G}}\leq k-1}a_J(x_0)(x_0^{-1}x)^J-\sum_{|J|_{\mathbb{G}}= k}q_J(x_0^{-1}x)^J|}{d(x,x_0)^k}>\varepsilon \Big\}\\
&\qquad \leq  \mathcal{L}^{N} \Big\{ x\in B(x_0,r): \frac{|f(x)-\sum_{|J|_{\mathbb{G}}\leq k}a_J(x_0)(x_0^{-1}x)^J|}{d(x,x_0)^k}> \varepsilon/2 \Big\} \\
&\qquad < \varepsilon r^Q
\end{split}
\end{equation*}
This proves $(1)\Rightarrow (2)$.

To prove $(2) \Rightarrow (1)$, we begin by applying condition $(2)$ with $\varepsilon = 1/n$ for $n\in\B{N}$. This gives constants $R_n\in\B{Q}^+$ and $q_J^n\in B_J\cap\B{Q}$ for $|J|_{\mathbb{G}}=k$ such that for all $r\in (0,R_n)\cap\B{Q},$
\begin{align*}
&\mathcal{L}^{N} \Big\{ x\in B(x_0,r): \frac{|f(x)-\sum_{|J|_{\mathbb{G}}\leq k-1}a_J(x_0)(x_0^{-1}x)^J-\sum_{|J|_{\mathbb{G}}= k}q_J^n(x_0^{-1}x)^J|}{d(x,x_0)^k}>1/n \Big\}\\
&\qquad  < r^Q/n.
\end{align*}
Both sides of the above inequality are continuous in $r$. Hence the inequality holds for all $r\in(0,R_n)$, provided the strict bound by $r^{Q}/n$ is replaced by a weak inequality. For every $|J|_{\mathbb{G}}=k$, $B_J$ is compact so the sequence $\{q_J^n\}_{n\in\B{N}}$ admits a convergent subsequence. Recall $\tilde{c}$ from Lemma \ref{dxj}. Replacing $q_{J}^{n}$ with a subsequence if necessary, we can assume that $q_J^n\to q_J\in B_J$ for each $|J|_{\mathbb{G}}=k$. For $0<r<R_n$ we have,
\begin{align*}
&\mathcal{L}^{N} \Big\{ x\in B(x_0,r):\frac{|f(x)-\sum_{|J|_{\mathbb{G}}\leq k-1}a_J(x_0)(x_0^{-1}x)^J-\sum_{|J|_{\mathbb{G}}= k}q_J(x_0^{-1}x)^J|}{d(x,x_0)^k}>\frac{1}{n}+\tilde{c}T|q_J^n-q_J| \Big\}\\
&\qquad \leq \mathcal{L}^{N} \Big\{ x\in B(x_0,r):\frac{|f(x)-\sum_{|J|_{\mathbb{G}}\leq k-1}a_J(x_0)(x_0^{-1}x)^J-\sum_{|J|_{\mathbb{G}}= k}q_J^n(x_0^{-1}x)^J|}{d(x,x_0)^k}>\frac{1}{n} \Big\}\\ 
&\qquad < r^Q / n.
\end{align*}
Given $\varepsilon>0$, choose $n\in\B{N}$ such that $\frac{1}{n}+\tilde{c}T|q_J^n-q_J|<\varepsilon$. For  $0<r<R_n$,
\begin{align*}
		&\mathcal{L}^{N} \Big\{ x\in B(x_0,r):\frac{|f(x)-\sum_{|J|_{\mathbb{G}}\leq k-1}a_J(x_0)(x_0^{-1}x)^J-\sum_{|J|_{\mathbb{G}}= k}q_J(x_0^{-1}x)^J|}{d(x,x_0)^k}>\varepsilon\Big\}\\
		&\qquad  <r^Q / n < \varepsilon r^Q.
\end{align*}
This shows $f$ is $k$-times approximately differentiable at $x_0$ with $a_J(x_0)=q_{J}\in B_J$ for all $|J|_{\mathbb{G}}=k$.
\end{proof}

\subsection{Proof of Measurability}

We prove Proposition \ref{diffmeas} by induction on $k$. By Lemma \ref{boundedisok}, we can assume that $f$ is bounded and $D=\B{G}$. We start with $k=0$.

\begin{lemma}
Suppose a bounded measurable function $f\colon \B{G}\to \B{R}$ is $0$-approximately differentiable almost everywhere. Then the coefficients of the $0$-approximate derivative are measurable.
\end{lemma}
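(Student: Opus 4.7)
The plan is to observe that for $k=0$ the proposition reduces to the assertion that $f$ agrees almost everywhere with its own approximate limit, which is a classical consequence of the Lebesgue density theorem.

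First I would unpack the definition. A polynomial centered at $x_0$ of homogeneous degree at most $0$ is just the constant $a_{\underline{0}}(x_0)$, and $d(x,x_0)^{0}=1$. Hence $0$-approximate differentiability of $f$ at $x_0$ is equivalent to
\[ \aplim_{x\to x_0} f(x) = a_{\underline{0}}(x_0), \]
and Proposition \ref{structure_of_p}(2) guarantees that the constant $a_{\underline{0}}(x_0)$ is uniquely determined.

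Next I would invoke the fact, recorded in the Background just before Definition \ref{apTaylorapDiif}, that every measurable function is approximately continuous at almost every point of its domain. Thus $\aplim_{x\to x_0} f(x) = f(x_0)$ for almost every $x_0 \in \B{G}$. Intersecting this full-measure set with the assumed full-measure set on which $f$ is $0$-approximately differentiable, uniqueness forces $a_{\underline{0}}(x_0) = f(x_0)$ at almost every $x_0$.

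Since $a_{\underline{0}}$ and $f$ agree off a set of measure zero and $f$ is measurable, $a_{\underline{0}}$ is measurable (defining it, if necessary, to equal $f$ on the null exceptional set). There is no substantive obstacle in this base case; it is essentially a matter of unpacking the definition of the approximate limit and applying approximate continuity. The real work for Proposition \ref{diffmeas} lies in the inductive step to follow, where the higher order coefficients cannot be read off directly from pointwise values of $f$.
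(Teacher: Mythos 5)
Your proposal is correct and follows essentially the same argument as the paper: unpack that for $k=0$ the approximate derivative is a constant equal to the approximate limit of $f$, invoke almost-everywhere approximate continuity of measurable functions to identify that constant with $f(x_0)$ a.e., and deduce measurability from that of $f$. The only cosmetic difference is that you explicitly cite the uniqueness result (Proposition \ref{structure_of_p}) where the paper leaves it implicit.
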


\begin{proof}
The $0$-approximate derivative at each point $x_{0}$ where it exists is a constant depending on $x_{0}$. Denote it by $A(x_{0})$. The definition of $0$-approximate differentiability gives $\aplim_{x\to x_{0}} |f(x)-A(x_{0})|=0$ for almost every $x_{0}$. In other words, $\aplim_{x\to x_{0}}f(x)=A(x_{0})$ for almost every $x_{0}$. However, every measurable function is approximately continuous almost everywhere, so $\aplim_{x\to x_{0}}f(x)=f(x_{0})$ for almost every $x_{0}$. Hence $f(x_{0})=A(x_{0})$ for almost every $x_{0}$. Since $f$ is measurable, it follows $A$ is measurable.
\end{proof}

We now establish the induction step.

\begin{lemma}\label{induction}
Fix positive integer $k$ so that whenever a bounded measurable function $f\colon \B{G}\to \B{R}$ is $(k-1)$-approximately differentiable almost everywhere, it follows necessarily that the coefficients $a_{J}$, $|J|_{\mathbb{G}}\leq k-1$ are measurable. 

Then for every bounded measurable function $f\colon \B{G}\to \B{R}$ which is $k$-approximately differentiable almost everywhere, the coefficients $a_{J}$, $|J|_{\mathbb{G}}\leq k$, are measurable
\end{lemma}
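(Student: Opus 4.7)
\emph{Plan.} My approach is to split the coefficients by homogeneous degree: those with $|J|_{\bbG}\leq k-1$ are handled by the inductive hypothesis, while those with $|J|_{\bbG}=k$ are handled by combining Lemma~\ref{decomposition} with Fubini's theorem.

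\emph{Lower-order coefficients.} At every point $x_0$ where $f$ is $k$-approximately differentiable, Lemma~\ref{dxj} gives $|(x_0^{-1}x)^J|\leq \tilde c\,d(x,x_0)^k$ for $|J|_{\bbG}=k$, so the degree-$k$ terms of the polynomial contribute $O(d(x,x_0))$ after division by $d(x,x_0)^{k-1}$. Combined with $k$-approximate differentiability this gives
\begin{equation*}
\aplim_{x\to x_0}\frac{|f(x)-\sum_{|J|_{\bbG}\leq k-1}a_J(x_0)(x_0^{-1}x)^J|}{d(x,x_0)^{k-1}}=0,
\end{equation*}
so $f$ is $(k-1)$-approximately differentiable at $x_0$ with that truncated polynomial. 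By Proposition~\ref{structure_of_p}(2) these truncated coefficients are the unique coefficients of the $(k-1)$-approximate derivative, and applying the inductive hypothesis to the bounded measurable function $f$ shows that $a_J$ is Lebesgue measurable on $\bbG$ for every $|J|_{\bbG}\leq k-1$.

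\emph{Top-order coefficients via Lemma~\ref{decomposition}.} It suffices to show that for every tuple $(B_J)_{|J|_{\bbG}=k}$ of non-empty closed bounded intervals the set $E:=\{x_0:a_J(x_0)\in B_J\ \forall\,|J|_{\bbG}=k\}$ is Lebesgue measurable, after extending each $a_J$ arbitrarily on the null set where $f$ is not $k$-approximately differentiable; measurability of each individual coordinate $a_J$ then follows by taking $B_{J'}=[-n,n]$ for $J'\neq J$ and letting $n\to\infty$. By Lemma~\ref{decomposition}, up to the aforementioned null set, $E$ coincides with
\begin{equation*}
\bigcap_{\varepsilon\in\mathbb{Q}^+}\ \bigcup_{\substack{R\in\mathbb{Q}^+\\ q\in\prod(B_J\cap\mathbb{Q})}}\ \bigcap_{r\in(0,R)\cap\mathbb{Q}}\{x_0:G_{\varepsilon,q,r}(x_0)<\varepsilon r^Q\},
\end{equation*}
where $G_{\varepsilon,q,r}(x_0):=\mathcal{L}^N\{x\in B(x_0,r):H_q(x_0,x)>\varepsilon\}$ and
\begin{equation*}
H_q(x_0,x):=\frac{\big|f(x)-\sum_{|J|_{\bbG}\leq k-1}a_J(x_0)(x_0^{-1}x)^J-\sum_{|J|_{\bbG}=k}q_J(x_0^{-1}x)^J\big|}{d(x,x_0)^k}.
\end{equation*}

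\emph{Fubini step and obstacle.} The crux is to show each $G_{\varepsilon,q,r}$ is Lebesgue measurable in $x_0$. Since the lower-degree $a_J$ are measurable on $\bbG$ by the first step and the maps $(x_0,x)\mapsto (x_0^{-1}x)^J$ and $(x_0,x)\mapsto d(x,x_0)$ are continuous, the function $H_q$ is Lebesgue measurable on the open set $\{(x_0,x)\in\bbG\times\bbG:x\neq x_0\}$. Consequently the indicator of $\{(x_0,x):H_q(x_0,x)>\varepsilon,\ x\in B(x_0,r)\}$ is jointly measurable on $\bbG\times\bbG$, and Fubini's theorem yields measurability of $G_{\varepsilon,q,r}$ in $x_0$. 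The countable combination displayed above is then measurable, so $E$ is measurable, completing the induction. The main obstacle is precisely this Fubini step: it is the mechanism that promotes the pointwise analytic density condition defining approximate differentiability into a globally measurable condition in $x_0$, and it depends essentially on having already established measurability of the lower-degree coefficients so that $H_q$ is jointly measurable.
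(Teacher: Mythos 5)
Your proposal is correct and takes a genuinely different route for the top-order coefficients. You and the paper agree on the first half: the lower-order $a_J$ (with $|J|_{\bbG}\leq k-1$) follow from truncating the $k$-approximate derivative and invoking the inductive hypothesis, and the top-order coefficients are accessed through Lemma~\ref{decomposition}, which reduces everything to showing that the sliced-measure function $G_{\varepsilon,q,r}(x_0)=\mathcal{L}^N\{x\in B(x_0,r):H_q(x_0,x)>\varepsilon\}$ is measurable in $x_0$. The divergence is in how that is established. The paper applies the classical Lusin theorem to the (already measurable) lower-order $a_J$ to get compact sets $A$ on which they are \emph{continuous}, and then proves that $\phi$ has open superlevel sets in $A$; this lower-semicontinuity argument is precisely why the auxiliary convergence estimates Lemma~\ref{convergedistance} and Lemma~\ref{convergeindex} are developed in the paper. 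You bypass the Lusin reduction and those convergence lemmas entirely: since the lower-order $a_J$ are $\mathcal{L}^N$-measurable, the maps $(x_0,x)\mapsto a_J(x_0)$ and $(x_0,x)\mapsto f(x)$ are $\mathcal{L}^{2N}$-measurable on $\bbG\times\bbG$, the monomials $(x_0^{-1}x)^J$ and $d(x,x_0)^k$ are continuous, so $H_q$ is jointly Lebesgue measurable away from the (null, closed) diagonal, and Fubini gives measurability of $G_{\varepsilon,q,r}$. This is, in fact, the same Fubini/Carath\'eodory device the paper itself uses later in the proof of Lemma~\ref{mainthma1} (citing \cite[Lemma 4.15]{AB}) once measurability of the coefficients is available, so you are not importing tools foreign to the paper; you are simply applying them one step earlier, which works because only lower-order $a_J$ (already known measurable by induction) and rational constants $q_J$ appear in $H_q$, so there is no circularity. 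Your route is shorter; the paper's route, though longer, avoids any appeal to joint measurability on the product and yields the slightly finer structural information that $\phi$ is lower semicontinuous on the Lusin sets. Two points worth making explicit if you write this up: (i) the assertion that $(x_0,x)\mapsto a_J(x_0)$ is $\mathcal{L}^{2N}$-measurable requires the standard fact that $E\times\bbG$ is $\mathcal{L}^{2N}$-measurable whenever $E$ is $\mathcal{L}^N$-measurable (or, equivalently, a Carath\'eodory-type lemma applied to the polynomial part); and (ii) as you note, Lemma~\ref{decomposition} is stated only at points of $k$-approximate differentiability, so the identification of $E$ with the displayed countable combination holds only up to the null set where that fails, which is harmless.
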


\begin{proof}
Assume the hypotheses of the lemma. Fix a bounded measurable function $f\colon \B{G}\to \B{R}$ which is $k$-approximately differentiable almost everywhere with $k$-approximate derivative $\sum_{|J|_{\mathbb{G}}\leq k}a_J(x_0)(x_0^{-1}x)^J$. It is easy to check that $f$ is also $(k-1)$-approximately differentiable almost everywhere with $(k-1)$-approximate derivative $\sum_{|J|_{\mathbb{G}}\leq k-1}a_J(x_0)(x_0^{-1}x)^J$. It then follows from the inductive hypothesis that the coefficients $a_J$, $|J|_{\mathbb{G}}\leq k-1$ are measurable. It suffices to show that $a_{J}|_{B(0,T)}$ is measurable for $|J|_{\mathbb{G}}=k$ for any fixed $T>0$. 

We claim that it suffices to show $a_{J}$, $|J|_{\mathbb{G}}=k$ are measurable when restricted to any compact set $A\subset B(0,T)$ with the following properties:
\begin{itemize}
	\item $a_{J}|_{A}$ are defined at every point of $A$ for $|J|_{\bbG}=k$
	\item $a_{J}|_{A}$ are continuous for all $|J|_{\mathbb{G}}\leq k-1$. 
\end{itemize}
We suppose we can show this implication and see how the general case follows. Since $a_J$ is measurable for each $|J|_{\mathbb{G}}\leq k-1$, we can apply the classical Lusin theorem to $a_{J}|_{B(0,T)}$ for each $|J|_{\bbG}\leq k-1$. For every $m\in \bbN$ we find compact $A_{m}\subset B(0,T)$ such that $\mathcal{L}^{N}(B(0,T)\setminus A_{m})<1/m$ and $a_{J}|_{A_{m}}$ is continuous and everywhere defined for $|J|_{\mathbb{G}}\leq k-1$. Using the assumed implication with $A$ replaced by $A_{m}$, it follows that $a_{J}|_{A_{m}}$ are measurable for every $m$. Since $\mathcal{L}^{N}(B(0,T)\setminus \cup_{m=1}^{\infty}A_m)=0$, it follows easily that $a_{J}|_{B(0,T)}$ is measurable for $|J|_{\bbG}\leq k$  as required. 

Fix $T>0$ and a compact set $A$ as above. Let $B_J\subset\B{R}$ be non-empty closed bounded intervals for each $|J|_{\mathbb{G}}=k$. Let $\varepsilon\in\B{Q}^+$, $R\in\B{Q}^+$, $q_J\in B_J\cap\B{Q}$ for $|J|_{\B{G}}=k$, $r\in(0,R)\cap\B{Q}$. Define for $x_{0}\in A$ and $x\in \bbG\setminus\{x_0\}$,
\[Q(x,x_{0})=\frac{|f(x)-\sum_{|J|_{\mathbb{G}}\leq k-1}a_J(x_0)(x_0^{-1}x)^J-\sum_{|J|_{\mathbb{G}}= k}q_J(x_0^{-1}x)^J|}{d(x,x_0)^k}.\]
To prove measurability is enough to show that $\{x_{0}\in A: a_{J}(x_{0})\in B_{J}\}$ is measurable for all $|J|_{\bbG}=k$. Using Lemma \ref{decomposition}, to do so it suffices to show that for any such $B_{J}, \varepsilon, R, q_{J}, r$, the following set is measurable
\[
\{ x_0\in A: \mathcal{L}^{N} \{ x\in B(x_0,r):Q(x,x_{0})>\varepsilon \} <\varepsilon r^Q \}.
\]
It suffices to show measurability of the function $\phi \colon A\to\B{R}^+$ defined by
\[
\phi(x_0)= \mathcal{L}^{N} \{ x\in B(x_0,r): Q(x,x_{0})>\varepsilon \}.
\]
To prove this, we show that the superlevel sets $U_{\alpha}=\{ x_0\in A: \phi(x_0)>\alpha \}$ are measurable for each $\alpha>0$. It suffices to show that $U_{\alpha}$ is an open subset of $A$. Fix $\alpha>0$ and $x_0\in U_{\alpha}$. For $n\in \bbN$ define the sets
\begin{equation*}
	\begin{split}
	S &= \{ x\in B(x_0,r): Q(x,x_{0})>\varepsilon \},\\
	S_n &= \{ x\in B(x_0,r): Q(x,x_{0})>\varepsilon+ 1/n \}.
	\end{split}
\end{equation*}
Recall that $Q(x,x_0)$ is a measurable function of $x$. Clearly $S_n\subset S_{n+1}$ for each $n\in\B{N}$ and $\cup_{n=1}^{\infty}S_n = S$. Hence $\mathcal{L}^{N}(S_n)\to \mathcal{L}^{N}(S)$. Since $\mathcal{L}^{N}(S)>\alpha$, then there exists $\tilde{n}\in\B{N}$ such that $\mathcal{L}^{N}(S_n)>\alpha$ for $n\geq\tilde{n}$. Define $\tilde{\varepsilon}:=\varepsilon+1/ \tilde{n}$. Observe that there exists $\tilde{\alpha}>\alpha$ such that
\begin{equation}\label{measure_epsilontilde}
\mathcal{L}^{N}\{ x\in B(x_0,r) :Q(x,x_{0})>\tilde{\varepsilon} \} >\tilde{\alpha}.
\end{equation}
Our aim is to show that whenever $y_{0}\in A$ is sufficiently close to $x_{0}$, then
\[\mathcal{L}^{N}\{ x\in B(y_0,r): Q(x,y_{0}) >\varepsilon \} >\alpha.\]
This would show that $\phi(y_{0})>\alpha$ so $y_{0}\in U_{\alpha}$, proving that $U_{\alpha}$ is an open subset of $A$ and completing the proof. To this end, using \eqref{measure_epsilontilde} first fix $0<\Lambda<r$ such that
\[\mathcal{L}^{N}\{ x\in B(x_0,r)\setminus B(x_{0},\Lambda) :Q(x,x_{0})>\tilde{\varepsilon} \} >\tilde{\alpha}.\]
Fix $0<\tilde{\delta}<\Lambda/2$ such that if $y_{0}\in A$ and $d(y_{0},x_{0})<\tilde{\delta}$ we have
\begin{equation}\label{y0}
\mathcal{L}^{N}\{ x\in B(y_0,r)\setminus B(y_{0},\Lambda) :Q(x,x_{0})>\tilde{\varepsilon}\} >\alpha.
\end{equation}

\begin{claim}\label{claimopen}
There exists $0<\delta<\tilde{\delta}$ such that if $y_0\in A$ and $d(y_{0},x_{0})<\delta$, then
\[|Q(x,y_{0})-Q(x,x_{0})|<\tilde{\varepsilon}-\varepsilon \mbox{ for every }x\in B(y_0,r)\setminus B(y_0,\Lambda).\]
In particular, $B(x_{0},\delta)\cap A\subset U_{\alpha}$. Hence $U_{\alpha}$ is open as a subset of $A$.
\end{claim}

\begin{proof}[Proof of Claim \ref{claimopen}]
Define,
\begin{equation*}
\begin{split}
a(x,x_{0})&:=\Big|f(x)-\sum_{|J|_{\mathbb{G}}\leq k-1}a_J(x_0)(x_0^{-1}x)^J-\sum_{|J|_{\mathbb{G}}= k}q_J(x_0^{-1}x)^J \Big|\\
b(x,x_{0})&:=d(x,x_0)^k.
\end{split}
\end{equation*}
Clearly $Q(x,x_{0})=a(x,x_{0})/b(x,x_{0})$. Similarly $Q(x,y_{0})=a(x,y_{0})/b(x,y_{0})$. Notice,
\begin{align*}
|Q(x,x_{0})-Q(x,y_{0})|&=\left|\frac{a(x,x_{0})}{b(x,x_{0})}-\frac{a(x,y_{0})}{b(x,y_{0})}\right|\\
&\leq \frac{|a(x,x_{0})||b(x,x_{0})-b(x,y_{0})|+|b(x,x_{0})||a(x,x_{0})-a(x,y_{0})|}{|b(x,x_{0})||b(x,y_{0})|}.
\end{align*}
If $d(y_{0},x_{0})<\Lambda/2$, then for all $x\in  B(y_0,r)\setminus B(y_0,\Lambda)$ we have 
\[\Lambda^{k}/2^{k}\leq |b(x,x_{0})| \leq (r+\Lambda/2)^{k}\]
and $|b(x,y_{0})| \geq \Lambda^{k}$. Recall that $f$ is bounded and $a_{J}$ is a continuous function on the compact set $A$ for $|J|_{\B{G}}\leq k-1$, hence bounded. Combining these estimates, there exists a constant $C>0$ independent of $x$ and $y_{0}$ such that whenever $d(y_{0},x_{0})<\Lambda/2$, then for all $x\in  B(y_0,r)\setminus B(y_0,\Lambda)$
\[ |Q(x,x_{0})-Q(x,y_{0})| \leq C |a(x,x_{0})-a(x,y_{0})|+C|b(x,x_{0})-b(x,y_{0})| .\]
To conclude the proof, it suffices to show that
\begin{equation}\label{conv1}
\sup_{x\in  B(y_0,r)\setminus B(y_0,\Lambda)} |a(x,x_{0})-a(x,y_{0})| \to 0 \mbox{ as }y_{0}\to x_{0},
\end{equation}
\begin{equation}\label{conv2}
\sup_{x\in  B(y_0,r)\setminus B(y_0,\Lambda)} |b(x,x_{0})-b(x,y_{0})| \to 0 \mbox{ as }y_{0}\to x_{0}.
\end{equation}
Equation \eqref{conv2} follows from Lemma \ref{convergedistance}. To prove \eqref{conv1} we estimate $|a(x,x_{0})-a(x,y_{0})|$ by
\begin{equation*}
	\begin{split}
	&\Bigg| |f(x)-\sum_{|J|_{\mathbb{G}}\leq k-1}a_J(x_0)(x_0^{-1}x)^J-\sum_{|J|_{\mathbb{G}}= k}q_J(x_0^{-1}x)^J|\\
	&\qquad \qquad - |f(x)-\sum_{|J|_{\mathbb{G}}\leq k-1}a_J(y_0)(y_0^{-1}x)^J-\sum_{|J|_{\mathbb{G}}= k}q_J(y_0^{-1}x)^J| \Bigg|\\
	 & \leq\sum_{|J|_{\mathbb{G}}\leq k-1}\underbrace{|a_J(x_0)(x_0^{-1}x)^J-a_J(y_0)(y_0^{-1}x)^J|}_\text{I}\\
	&\qquad \qquad + \sum_{|J|_{\mathbb{G}}=k}\underbrace{q_J| (x_0^{-1}x)^J-(y_0^{-1}x)^J |}_\text{II}.
	\end{split}
\end{equation*}
We estimate the terms of I, for $|J|_{\mathbb{G}}\leq k-1$,
\begin{equation*}
	\begin{split}
		|a_J&(x_0)(x_0^{-1}x)^J-a_J(y_0)(y_0^{-1}x)^J|\\
		&= |a_J(x_0)(x_0^{-1}x)^J-a_J(x_0)(y_0^{-1}x)^J+a_J(x_0)(y_0^{-1}x)^J-a_J(y_0)(y_0^{-1}x)^J|\\
		&\leq |a_J(x_0)||(x_0^{-1}x)^J-(y_0^{-1}x)^J|+|(y_0^{-1}x)^J||a_J(x_0)-a_J(y_0)|.
	\end{split}
\end{equation*}
Notice $|a_J(x_0)|$ is fixed, $|(y_0^{-1}x)^J|$ is bounded over $x\in  B(y_0,r)$, $d(y_{0},x_{0})<\Lambda/2$. That $a_J|_A$ is continuous for $|J|_{\mathbb{G}}\leq k-1$ implies $|a_J(x_0)-a_J(y_0)|\to 0$ as $y_0\to x_0$ with $y_{0}\in A$. Thanks to Lemma \ref{convergeindex} we have that, for $|J|_{\mathbb{G}}\leq k-1$,
\[ \sup_{x\in  B(y_0,r)\setminus B(y_0,\Lambda)}|(x_0^{-1}x)^J-(y_0^{-1}x)^J|\to 0 \mbox{ as }y_{0}\to x_{0}\]
and this suffices to complete the estimate I. The same is true for $|J|_{\B{G}}=k$, which gives the estimate II, hence completing the proof.
\end{proof}

Claim \ref{claimopen} concludes the proof of Lemma \ref{induction} and hence Proposition \ref{diffmeas}.
\end{proof}

\section{Lusin Approximation by $C^{k}$ Functions}\label{Lusindiff}

\begin{theorem*}(Restatement of Theorem \ref{mainthma})
Let $D$ be a measurable subset of $\bbG$ and $f\colon D\to \B{R}$ be measurable. Then the following are equivalent for every non-negative integer $k$:
\begin{enumerate}
\item $f$ is $k$-approximately differentiable at almost every point of $D$. 
\item $f$ admits a Lusin approximation by functions in $C^{k}_{\bbG}(\bbG)$.
\end{enumerate} 
\end{theorem*}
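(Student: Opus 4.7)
The implication $(2) \Rightarrow (1)$ is the easier direction. Given $u_n \in C^k_{\bbG}(\bbG)$ with $\mathcal{L}^N\{x \in D : u_n(x) \neq f(x)\} < 1/n$, set $E_n := \{x \in D : f(x) = u_n(x)\}$; then $\mathcal{L}^N(D \setminus \bigcup_n E_n) = 0$. At almost every $x_0 \in E_n$ (which is a density point of $E_n$), Theorem \ref{Teo_Taylor} applied to $u_n$ gives the genuine Taylor polynomial $P_k(u_n, x_0, \cdot)$, and since $f$ agrees with $u_n$ on a set of density one at $x_0$, this polynomial is also a $k$-approximate derivative of $f$ at $x_0$. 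Hence $f$ is $k$-approximately differentiable almost everywhere on $D$.

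For $(1) \Rightarrow (2)$, fix $\varepsilon > 0$ and write the $k$-approximate derivative as $p(x_0, x) = \sum_{|J|_{\bbG} \leq k} \alpha_J(x_0)(x_0^{-1}x)^J / J!$ at a.e.\ $x_0 \in D$. Proposition \ref{structure_of_p} supplies uniqueness of the $\alpha_J(x_0)$, and Proposition \ref{diffmeas} gives their measurability. Inverting the triangular system \eqref{def_a_J} (which has unit diagonal), I define measurable functions $f_J \colon D \to \bbR$ for $|J|_{\bbG} \leq k$ so that $P_k(\{f_J\}, x_0, \cdot) = p(x_0, \cdot)$ almost everywhere; in particular $f_{\underline 0} = f$.

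The main step is to extract a closed set $K \subset D$ with $\mathcal{L}^N(D \setminus K) < \varepsilon$ on which $\{f_J|_K\}$ satisfies the hypotheses of Theorem \ref{genclassicalWhitney}. By a standard localization (performing the argument inside each of a locally finite cover of $\bbG$ by balls and summing $\varepsilon$'s geometrically) I may assume $D$ lies in a fixed compact set. Applying Lusin's theorem to each $f_J$ together with an Egorov-type argument at error levels $1/n$ for the defining approximate limit, I produce a compact $K \subset D$ with $\mathcal{L}^N(D \setminus K) < \varepsilon$, each $f_J|_K$ continuous and bounded, and a modulus $\omega(r) \to 0$ as $r \to 0^+$ such that, uniformly in $x_0 \in K$,
\[
\mathcal{L}^N\{x \in B(x_0,r) : |f(x) - P_k(\{f_J\}, x_0, x)| > \omega(r)\, d(x, x_0)^k\} \leq \omega(r)\, r^Q
\]
for all sufficiently small $r > 0$.

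With this uniform control I verify the Whitney remainder bound. Fix $x, y \in K$ close, set $r := d(x,y)$, and apply the displayed estimate at both base points $x$ and $y$ on the ball of radius $r$. Lemma \ref{lemma_R} gives $\mathcal{L}^N(B(x,r) \cap B(y,r)) \geq V r^Q / 2^Q$, so for $r$ small enough the intersection $S$ of the two control sets with $B(x,r) \cap B(y,r)$ still has measure $\geq V r^Q / 2^{Q+1}$, and on $S$ both polynomials differ from $f$ by at most $\omega(r) r^k$, hence from each other by $\leq 2\omega(r) r^k$. Viewing $Q(z) := P_k(\{f_J\}, x, z) - P_k(\{f_J\}, y, z)$ as a polynomial of homogeneous degree at most $k$ centered at $y$ on $S \subset B(y, 2r)$, the Carnot De Giorgi lemma (Lemma \ref{degiorgi_carnot}) yields, for every $|\alpha|_{\bbG} \leq k$,
\[
|X^\alpha Q(y)| \leq \frac{C}{r^{Q + |\alpha|_{\bbG}}}\int_S |Q(z)| \dd z \leq C'\, \omega(r)\, r^{k - |\alpha|_{\bbG}}.
\]
Since the construction of $f_J$ via the inverted system \eqref{def_a_J} ensures $X^J P_k(\{f_J\}, y, \cdot)\big|_{z=y} = f_J(y)$, unfolding the definition of $R_J$ gives $|R_J(x,y)| \leq C''\, \omega(r)\, d(x,y)^{k - |J|_{\bbG}}$, which is the required $o(d(x,y)^{k - |J|_{\bbG}})$ decay, uniformly on $K$. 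Theorem \ref{genclassicalWhitney} then produces $F \in C^k_{\bbG}(\bbG)$ with $X^J F|_K = f_J$ for all $|J|_{\bbG} \leq k$; in particular $F = f$ on $K$, so $\mathcal{L}^N\{x \in D : F(x) \neq f(x)\} \leq \mathcal{L}^N(D \setminus K) < \varepsilon$.

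\textbf{Expected main obstacle.} The genuinely hard ingredient is manufacturing the Whitney data $\{f_J\}$ from $\{\alpha_J\}$ and knowing it is measurable, since in the Carnot setting the coefficients of the approximate derivative do not literally coincide with horizontal derivatives of $f$ and no approximation-by-$C^k_{\bbG}$ is available a priori; this is the content of Proposition \ref{diffmeas}, whose proof occupies all of Section \ref{measurability}. Secondarily, the uniform Egorov-type selection of $K$ must be orchestrated so that the single modulus $\omega(r)$ simultaneously controls all base points and all orders of derivative, which is what enables the De Giorgi comparison between $P_k(\{f_J\}, x, \cdot)$ and $P_k(\{f_J\}, y, \cdot)$. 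Once those two ingredients are in place, the passage from the Euclidean $C^k$ Lusin argument of \cite{LT94} to the Carnot setting transfers via Lemma \ref{degiorgi_carnot} and Lemma \ref{lemma_R}.
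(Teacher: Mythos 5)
Your proposal follows essentially the same route as the paper for both directions: $(2)\Rightarrow(1)$ via Theorem \ref{Teo_Taylor} at density points of $\{f=u\}$, and $(1)\Rightarrow(2)$ by defining the Whitney data $f_J$ from the approximate derivative polynomial, invoking Proposition \ref{diffmeas} for measurability, building a closed set with uniform control, estimating the remainders $R_J$ via Lemma \ref{lemma_R} and the Carnot De Giorgi Lemma \ref{degiorgi_carnot} applied to $q(z)=P_k(\{f_J\},x,z)-P_k(\{f_J\},y,z)$, and finally applying Theorem \ref{genclassicalWhitney}. Your ``Egorov-type selection of $K$ with a single modulus $\omega(r)$'' is the same device as the paper's nested intersection $F=\bigcap_m F(\varepsilon/2^m,1/m)$, just packaged differently, and the inversion of the triangular system \eqref{def_a_J} to produce $\{f_J\}$ is equivalent to the paper's direct definition $f_J(x)=X^Jp(x,y)|_{y=x}$.

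The one place that is too quick is the reduction ``by a standard localization I may assume $D$ lies in a fixed compact set.'' This does not follow by covering $\bbG$ with balls and summing $\varepsilon$'s, because the Lusin approximant must be a \emph{single} function in $C^k_\bbG(\bbG)$, and the locally constructed approximants on a cover need not agree on overlaps. If $\mathcal L^N(D)=\infty$, one also cannot in general find a finite $j_0$ with $\mathcal L^N(D\setminus C_{j_0})<\varepsilon$, so the exhaustion step breaks. The paper treats the finite-measure case first and then handles $\mathcal L^N(D)=\infty$ by a more delicate decomposition of $\bbG$ into alternating fat and thin annuli, using the Whitney extension theorem a second time to interpolate between the separate approximants across the thin annuli. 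Alternatively, one could salvage your phrasing by constructing a single closed set $K$ globally (as the union of compacta $K_i$ over a locally finite cover) and applying Theorem \ref{genclassicalWhitney} once to $K$, but that requires checking the pointwise Whitney remainder condition on the whole of $K$ (across pieces), not just on each $K_i$; this is not automatic and should be justified.
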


We prove each implication of Theorem \ref{mainthma} in Lemma \ref{mainthma1} and Lemma \ref{mainthma2}. They are based on adapting the Euclidean techniques from \cite{LT94} with the Whitney extension theorem in Carnot groups (Theorem \ref{genclassicalWhitney}) and suitable adaptations to the Carnot group setting using the results of Section \ref{DiGiorgi} and Section \ref{measurability}. The stronger hypothesis of $k$-approximate derivative is used to obtain directly a $C^{k}_{\bbG}(\bbG)$ approximation.

\begin{lemma}\label{mainthma1}
Let $D$ be a measurable subset of $\bbG$ and let $k$ be a non-negative integer. Suppose $f\colon D\to \B{R}$ is a measurable function which is approximately differentiable of order $k$ at almost every point of $D$. Then $f$ admits a Lusin approximation by functions in $C^{k}_{\bbG}(\bbG)$.
\end{lemma}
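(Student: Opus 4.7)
My strategy is to build a large closed subset $F\subset D$ on which the coefficients $\alpha_J$ of the approximate derivative polynomial behave like smooth Taylor coefficients, and then apply the Whitney extension theorem (Theorem~\ref{genclassicalWhitney}) to the data $f_J:=\alpha_J$. The resulting extension $u\in C^k_{\bbG}(\bbG)$ will automatically satisfy $u|_F=f|_F$, because the coefficient $\alpha_{\underline 0}(x_0)$ forces $\alpha_{\underline 0}(x_0)=f(x_0)$ at every point where $f$ is both approximately continuous and $k$-approximately differentiable, which is almost every point of $D$. To produce $F$ I first invoke Proposition~\ref{diffmeas} to obtain measurability of the $\alpha_J$ on the full-measure subset $D_0\subset D$ of $k$-approximate differentiability. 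Lusin's theorem gives a closed set $F'\subset D_0$ with $\mathcal{L}^N(D\setminus F')<\varepsilon/2$ on which $f$ and all $\alpha_J$ are continuous (hence bounded on compact subsets). An Egorov-type argument applied to the family $r\mapsto r^{-Q}\mathcal{L}^N\{z\in B(x,r):|f(z)-P_k(\{f_J\},x,z)|>\eta r^k\}$ (which tends to $0$ as $r\downarrow 0$ for each density point $x$ and each $\eta>0$) produces a closed $F\subset F'$ with $\mathcal{L}^N(F'\setminus F)<\varepsilon/2$ on which the estimate is uniform: for each $\eta>0$ there is $\delta(\eta)>0$ such that for every $x\in F$ and every $0<r<\delta(\eta)$,
\[
\mathcal{L}^N\{z\in B(x,r):|f(z)-P_k(\{f_J\},x,z)|>\eta r^k\}<\eta r^Q.
\]

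\textbf{The De Giorgi step.} The heart of the argument is to upgrade this density-type estimate to the pointwise Whitney compatibility conditions
\[
|R_J(x,y)|\le \eta\, d(x,y)^{k-|J|_{\bbG}}\qquad (|J|_{\bbG}\le k)
\]
for all $x,y\in F$ sufficiently close. The key algebraic observation is that, because $X^J P_k(\{f_J\},y,\cdot)\big|_{z=y}=f_J(y)$ by the definition \eqref{def_a_J} of the coefficients $\alpha_J$, one may write
\[
R_J(x,y)=-X^J Q(y),\qquad Q(z):=P_k(\{f_J\},x,z)-P_k(\{f_J\},y,z).
\]
Since each monomial $(x_0^{-1}z)^J$ has homogeneous degree at most $|J|_{\bbG}$ in $z$ (because the BCH factors $\mathcal{Q}_i(-x_0,z)$ are jointly homogeneous of degree $d_i$), $Q$ is a polynomial in $z$ of homogeneous degree at most $k$, so the Carnot De Giorgi lemma (Lemma~\ref{degiorgi_carnot}) is available. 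Set $r:=d(x,y)$ and apply the uniform estimate above at both $x$ and $y$; by Lemma~\ref{lemma_R} one has $\mathcal{L}^N(B(x,r)\cap B(y,r))\ge V r^Q/2^Q$, so the set
\[
E:=\{z\in B(x,r)\cap B(y,r):|f(z)-P_k(\{f_J\},x,z)|\le \eta'r^k\text{ and }|f(z)-P_k(\{f_J\},y,z)|\le \eta'r^k\}
\]
has measure at least $c\, r^Q$ (with $c=V/2^{Q+1}$, say) provided $\eta'<V/2^{Q+2}$ and $r<\delta(\eta')$. On $E$ the triangle inequality gives $|Q(z)|\le 2\eta'r^k$, and Lemma~\ref{degiorgi_carnot} applied to $Q$ centered at $y$ on $E\subset B(y,r)$ yields
\[
|X^J Q(y)|\le \frac{C}{r^{Q+|J|_{\bbG}}}\int_E |Q(z)|\dd z\le C'\eta'\, r^{k-|J|_{\bbG}}.
\]
Choosing $\eta':=\min(\eta/C',V/2^{Q+2})$ and $\delta'(\eta):=\delta(\eta')$ establishes the required bound.

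\textbf{Conclusion and main obstacle.} After replacing $F$ by a compact subset of slightly smaller measure, continuity of $\{f_J\}$ and the uniform remainder estimate just established verify both hypotheses of Theorem~\ref{genclassicalWhitney}; it produces $u\in C^k_{\bbG}(\bbG)$ with $X^J u|_F=f_J$ for all $|J|_{\bbG}\le k$. In particular $u|_F=f|_F$, hence $\mathcal{L}^N\{x\in D:u(x)\neq f(x)\}\le \mathcal{L}^N(D\setminus F)<\varepsilon$, as required. The main obstacle is the De Giorgi step: one must simultaneously quantify approximate differentiability at both points $x$ and $y$ so that the intersection $E$ of the two good sets remains quantitatively large (this is where Lemma~\ref{lemma_R} is essential), recognize the difference $Q$ as a polynomial of Carnot homogeneous degree at most $k$ in $z$ (a subtlety absent from the Euclidean argument since $(x_0^{-1}z)^J$ is not a single monomial in $z$), and apply Lemma~\ref{degiorgi_carnot} at the center $y$, rather than $x$, so that it extracts exactly $X^J Q(y)=-R_J(x,y)$.
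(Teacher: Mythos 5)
Your overall strategy is the same as the paper's: use Proposition~\ref{diffmeas} for measurability, pass to a large closed set on which the density estimate for the approximate derivative is quantitatively uniform, apply the Carnot De Giorgi lemma to the difference of the two Taylor-type polynomials at nearby points $x,y$ (centered at $y$, using Lemma~\ref{lemma_R} to keep the good set a fixed proportion of the ball), and feed the resulting Whitney data into Theorem~\ref{genclassicalWhitney}. The paper reaches the uniform estimate via an explicit exhaustion $C_j=A_j\cap B_j$ while you invoke Lusin plus an Egorov-type argument, but these are essentially the same device. Two points, however, need attention.

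First, the choice $f_J:=\alpha_J$ is not the right Whitney data in a Carnot group. The notation $P_{k}(\{f_J\},x_0,x)$ in Theorem~\ref{genclassicalWhitney} is defined by first inserting the $f_K$ in place of $X^K u$ into \eqref{def_a_J} and then forming \eqref{Pk}; so $P_{k}(\{\alpha_J\},x_0,x)=\sum_{J}\bigl(\sum_{K}\beta_{JK}\alpha_K(x_0)\bigr)\frac{(x_0^{-1}x)^J}{J!}$, which is \emph{not} the approximate derivative polynomial $p(x_0,x)=\sum_J\alpha_J(x_0)\frac{(x_0^{-1}x)^J}{J!}$, because the matrix $(\beta_{JK})$ is only triangular with unit diagonal and has nontrivial off-diagonal entries in a genuine Carnot group (this is precisely where the Carnot case departs from $\mathbb{R}^n$, where $\beta$ is the identity). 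With your choice, the set $E$ and the Egorov estimate are phrased in terms of $P_k(\{f_J\},x,z)$, which does not coincide with $p(x,z)$, so the approximate-differentiability hypothesis does not control the quantity you estimate. The fix, which the paper uses, is to set $f_J(x):=X^J p(x,\cdot)\big|_{\cdot=x}$; then $\alpha_J(x)=\sum_K\beta_{JK}f_K(x)$ by the Taylor coefficient formula, $P_k(\{f_J\},x,\cdot)=p(x,\cdot)$, and your subsequent identity $R_J(x,y)=-X^J Q(y)$ becomes correct. Measurability of these $f_J$ still follows from Proposition~\ref{diffmeas} via the invertible constant-coefficient relation to the $\alpha_J$.

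Second, you implicitly assume $\mathcal{L}^{N}(D)<\infty$: Lusin's theorem and Egorov's theorem, as you invoke them to produce $F'$ and then $F$ with $\mathcal{L}^{N}(D\setminus F)<\varepsilon$ and the uniform-in-$x$ radius $\delta(\eta)$, require a finite-measure domain. The lemma is stated without a finiteness hypothesis (unlike Theorem~\ref{mainthmb}), and the paper handles $\mathcal{L}^{N}(D)=\infty$ by an additional annuli-interpolation argument: it proves the finite-measure case first, applies it on a sequence of fat annuli, and uses Theorem~\ref{genclassicalWhitney} again on thin interposed annuli to glue the resulting $C^k_{\bbG}$ pieces into a single global function while keeping the total exceptional set small. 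Alternatively one could run your Lusin/Egorov construction on bounded pieces and verify the (local) Whitney hypotheses with a $\delta$ depending on $\bar x$, but some such step must appear; as written the argument does not cover the infinite-measure case.

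Aside from these two issues, the De Giorgi step, the use of Lemma~\ref{lemma_R}, the centering of the De Giorgi estimate at $y$ so that $X^J$ of the polynomial difference produces $-R_J(x,y)$, the observation that the polynomial difference has homogeneous degree at most $k$ in $z$, and the conclusion via Theorem~\ref{genclassicalWhitney} all match the paper's proof.
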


\begin{proof}
First assume $\mathcal{L}^{N}(D)<\infty$. By replacing $D$ by a subset of full measure, we assume that $f$ has a $k$-approximate derivative at every point of $D$. Denote the $k$-approximate derivative at each point $x\in D$ by  $p(x,y)$. For any multi-index $J$ with $|J|_{\mathbb{G}}\leq k$, define $f_{J}\colon D \to \mathbb{R}$ by $f_{J}(x)~=~X^J p(x,y)|_{y=x}$, where the derivative is taken with respect to $y$. It follows from Proposition \ref{diffmeas} that $f_{J}$ is measurable on $D$ for each $|J|_{\mathbb{G}}\leq k$.

Fix $0<\delta<1$. For each $x\in D$ and $r>0$, define
\[
W_\delta(x,r) = B(x,r)\setminus \{ y\in D : |f(y)-p(x,y)|\leq \delta d(x,y)^k\},
\]
\[
T_{\delta}(r) = \{ (x,y)\in D\times D : d(x,y)<r,\, |f(y)-p(x,y)|>\delta d(x,y)^k \}.
\]
Each set $W_{\delta}(x,r)$ is a measurable subset of $\B{G}$. The map $(x,y)\to p(x,y)$ is continuous in $y$ for each fixed $x$ and measurable in $x$ for each fixed $y$. Hence $(x,y)\to p(x,y)$ is measurable on $D\times D$ \cite[Lemma 4.15]{AB}. This implies $T_\delta(r)$ is a measurable subset of $\B{G}\times\B{G}$. Let $Z=\{(x,y)\in \bbG\times \bbG: d(x,y)<r \mbox{ and }y\notin D\}$, also a measurable subset of $\bbG\times \bbG$. Then $W_{\delta}(x,r) = \{ y\in \bbG : (x,y)\in T_{\delta}(r) \cup Z \}$. Since $T_{\delta}(r) \cup Z$ is a measurable subset of $\bbG\times \bbG$, Fubini's theorem implies that $\mathcal{L}^N(W_{\delta}(x,r))$ is measurable as a function of $x\in D$ for any $r>0$. 

Recall that $V$ is the measure of the unit ball. Define,
\begin{align*}
A_j&=\Big\{x\in D :\mathcal{L}^N(W_{\delta}(x,r)) \leq Vr^Q/2^{Q+2} \mbox{ for all } r\leq 1/j\Big\},\\
B_j&= \Big\{x\in D : |f_{J}(x)|\leq j \text{ for all multi indices } |J|_{\B{G}}\leq k\Big\},\\
C_{j}&=A_{j}\cap B_{j}.
\end{align*}
Observe that $\mathcal{L}^N(W_{\delta}(x,r))$ is an increasing continuous function of $r$ for each fixed $x$. Fix a countable dense subsequence $\{r_i\}_{i=1}^{\infty} \subset(0,1/j]$. Then
\[
A_j = \bigcap_{i=1}^{\infty}\Big\{x\in D : \mathcal{L}^N(W_{\delta}(x,r_{i})) \leq Vr_i^Q / 2^{Q+2}\Big\}.
\]
This shows $A_{j}$ is a countable intersection of measurable sets, hence measurable. Clearly $B_{j}$ and hence $C_{j}$ are also measurable since the $f_{J}$ are measurable.

Temporarily fix $j\in \bbN$ and $x,y\in C_j$ with $0<d(x,y)\leq 1/j$. Let $r=d(x,y)$ and
\[
S(x,y,r,j) := [B(x,r)\cap B(y,r)]\setminus [W_{\delta}(x,r)\cup W_{\delta}(y,r)].
\]
By Lemma \ref{lemma_R}, $\mathcal{L}^N(B(x,r)\cap B(y,r)) \geq Vr^{Q}/2^{Q}$. By assuming $x,y\in C_{j}\subset A_j$, we have $\mathcal{L}^N(W_\delta(x,r))\leq Vr^{Q}/2^{Q+2}$ and $\mathcal{L}^N(W_{\delta}(y,r)))\leq Vr^{Q}/2^{Q+2}$. Hence
\begin{equation}\label{eq_measureofS0}
 \mathcal{L}^N(S(x,y,r,j)) \geq Vr^Q/2^{Q+1}.
\end{equation}
Given any $z\in S(x,y,r,j)$, define the polynomial $q(z) := p(y,z)-p(x,z)$. Then
\begin{align*}
|q(z)| &\leq |p(y,z)-f(z)|+|f(z)-p(x,z)|\\
&\leq \delta(d(y,z)^k+d(x,z)^k)\\
&\leq 2\delta r^k.
\end{align*}
Using \eqref{eq_measureofS0}, we can apply Lemma \ref{degiorgi_carnot} with $E=S(x,y,r,j)\subset B(y,r)$, $A=V/2^{Q+1}$, and $P$ replaced by $q$. This gives
\begin{align}\label{estimate_X^I0}
|(X^{I}q)(y)|&=|f_I(y)-(X^Ip)(x,y)|  \\
&\leq \frac{C}{r^{Q+|I|_{\B{G}}}}\int_{S(x,y,r,j)} |q(z)|dz\nonumber \\
&\leq C\delta r^{k-|I|_{\B{G}}}. \nonumber
\end{align}
The constants $C$ above vary on each line, but both depend only on $k$ and $Q$.

The sets $C_j$ are increasing as $j\to\infty$
and $\mathcal{L}^N(D\setminus \bigcup_j C_j)=0$ because $f$ has $k$-derivative $p(x,y)$ at every $x\in D$ and almost every point of $D$ is a density point of $D$. Since $\mathcal{L}^{N}(D)<\infty$, for any $\varepsilon >0$ there exists  $j_0\in \bbN$ such that $\mathcal{L}^N(D\setminus C_{j_0})\leq \varepsilon/2$. We can then choose a closed subset $F\subset C_{j_0}$ such that $\mathcal{L}^N(D\setminus F)<\varepsilon$. Combining this with \eqref{estimate_X^I0} and the definition of $B_{j}$, we have shown the following. For any $\varepsilon>0$ and $\delta>0$, there exists a closed set $F(\varepsilon, \delta)\subset D$ with $\mathcal{L}^N(D\setminus F(\varepsilon,\delta))<\varepsilon$  and $N(\varepsilon, \delta)\in \bbN$ such that for $x,y\in F(\varepsilon, \delta)$,
\begin{equation}\label{sskp0}
|f_I(y)-(X^Ip)(x,y)|\leq C\delta d(x,y)^{k-|I|_{\B{G}}} \mbox{ for }|I|_{\bbG}\leq k,\, d(x,y)<1/N(\varepsilon, \delta)
\end{equation}
\begin{equation}\label{bounded0}
|f_{J}(x)|\leq N(\varepsilon, \delta) \text{ for } |J|_{\B{G}}\leq k.
\end{equation}
Fix $\varepsilon > 0$ and define $F=\cap_{m=1}^{\infty}F(\varepsilon/2^m, 1/m)$. Clearly $\mathcal{L}^N(D\setminus F)<\varepsilon$ and $F$ is a closed set.

We now verify the conditions of Theorem \ref{genclassicalWhitney} for $\{f_I\}_{|I|_{\B{G}}\leq k}$ restricted to $F$. Since $F\subset F(\varepsilon/2, 1)$, \eqref{bounded0} gives $|f_{J}(x)|\leq N(\varepsilon/2, 1)$ for all $x\in F$ and multi indices $|J|_{\bbG}\leq k$. Fix $\eta > 0$ and $\bar{x}\in F$. Choose $M\in \bbN$ so that $1/M<\eta$. Suppose $x,y\in F$ with $d(\bar{x},x),d(\bar{x},y)<1/(2N(\varepsilon/2^{M},1/M))$. It follows $x,y\in F(\varepsilon/2^{M},1/M)$ with $d(x,y)<1/(N(\varepsilon/2^{M},1/M))$. Then by \eqref{sskp0}, recalling $1/M<\eta$, we have 
\[|f_I(y)-(X^Ip)(x,y)|\leq C\eta d(x,y)^{k-|I|_{\B{G}}} \mbox{ for }|I|_{\bbG}\leq k.\]
Hence we can can apply Theorem \ref{genclassicalWhitney} to $\{f_I\}_{|I|_{\B{G}}\leq k}$ on $F$. This yields a $C^{k}_{\bbG}(\bbG)$ function $g\colon \bbG \to \bbR$ which extends $\{f_I\}_{|I|_{\B{G}}\leq k}$ from $F$. Since $\mathcal{L}^N(D\setminus F)<\varepsilon$, this provides the required $C^k_{\bbG}(\bbG)$ approximation of $f$. This proves the lemma in the case $\mathcal{L}^{N}(D)<\infty$.

If $\mathcal{L}^N(D)=\infty$ we write $\bbG$ as a union of annuli (sets of the form $B(0,R)\setminus B(0,S)$) which are alternately fat ($R-S$ is relatively large) and thin ($R-S$ is relatively small). On each fat annuli we use the previous case to approximate $f$ in the Lusin sense on that annulus by a $C^{k}_{\bbG}$ function. In the thin annuli we use Theorem \ref{genclassicalWhitney} to interpolate and hence combine the individual $C^{k}_{\bbG}$ functions into one $C^{k}_{\bbG}$ function on $\bbG$. Provided the individual approximations are sufficiently strong in the Lusin sense and the union of the thin annuli has small measure, this yields the required Lusin approximation of $f$.

\end{proof}

\begin{lemma}\label{mainthma2}
Let $D$ be a measurable subset of $\bbG$ and let $k$ be a non-negative integer. Suppose $f\colon D\to \B{R}$ is a measurable function with the Lusin property of order $k$. Then $f$ is approximately differentiable of order $k$ at almost every point of $D$. 
\end{lemma}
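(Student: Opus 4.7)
The plan is to exploit the Lusin approximation to produce on a large subset of $D$ a smooth function that agrees with $f$, then transfer the Taylor expansion of the smooth function (which exists by Theorem \ref{Teo_Taylor}) to an approximate Taylor expansion of $f$ at density points of the agreement set. This is the classical ``reverse direction'' argument for Lusin-type theorems.

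First, for each $n \in \mathbb{N}$ I would apply the Lusin property with $\varepsilon = 2^{-n}$ to obtain $u_n \in C^k_{\mathbb{G}}(\mathbb{G})$ such that the set $E_n := \{x \in D : u_n(x) = f(x)\}$ satisfies $\mathcal{L}^N(D \setminus E_n) < 2^{-n}$. Setting $F_m := \bigcap_{n \geq m} E_n$, the Borel--Cantelli-type bound $\mathcal{L}^N(D \setminus F_m) \leq \sum_{n \geq m} 2^{-n} = 2^{1-m}$ shows that $\bigcup_m F_m$ has full measure in $D$, and on $F_m$ we have $f = u_n$ for every $n \geq m$, so in particular $f|_{F_m} = u_m|_{F_m}$.

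Next, let $x_0$ be any point of $D$ that lies in some $F_m$ and is simultaneously a density point of that $F_m$. By the Lebesgue differentiation theorem in the doubling metric measure space $(\bbG, d, \mathcal{L}^N)$, almost every point of $\bigcup_m F_m$ (and hence almost every point of $D$) has this property. Define the candidate polynomial
\[p(x_0, y) := P_k(u_m, x_0, y),\]
which is the Taylor polynomial of $u_m$ of homogeneous degree $k$ centered at $x_0$, a polynomial centered at $x_0$ of homogeneous degree at most $k$. For any $\varepsilon > 0$, the set
\[S_\varepsilon := \left\{ y \in D : \frac{|f(y) - p(x_0, y)|}{d(y, x_0)^k} > \varepsilon \right\}\]
is contained in $(D \setminus F_m) \cup T_\varepsilon$, where
\[T_\varepsilon := \left\{ y \in \bbG : \frac{|u_m(y) - P_k(u_m, x_0, y)|}{d(y, x_0)^k} > \varepsilon \right\},\]
since on $F_m$ we have $f = u_m$. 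By Theorem \ref{Teo_Taylor} applied to $u_m \in C^k_{\mathbb{G}}(\bbG)$, there exists $r > 0$ such that $T_\varepsilon \cap B(x_0, r) = \emptyset$, so $T_\varepsilon$ has density zero at $x_0$. Since $x_0$ is a density point of $F_m \subset D$, the set $D \setminus F_m$ also has density zero at $x_0$. Therefore $S_\varepsilon$ has density zero at $x_0$, which by definition means
\[\aplim_{y \to x_0} \frac{|f(y) - p(x_0, y)|}{d(y, x_0)^k} = 0.\]
Hence $f$ is approximately differentiable of order $k$ at $x_0$.

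The argument is mostly routine once the right bookkeeping is set up; the only small subtlety is ensuring that almost every point of $D$ simultaneously lies in some $F_m$ \emph{and} is a density point of that $F_m$, which follows by a standard exhaustion and the doubling property of $(\bbG, d, \mathcal{L}^N)$. No genuine obstacle arises in this direction, in contrast to the converse (Lemma \ref{mainthma1}), where Theorem \ref{genclassicalWhitney} and the measurability results of Section \ref{measurability} do the heavy lifting.
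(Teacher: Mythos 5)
Your proposal is correct and takes essentially the same approach as the paper: exhibit, for almost every $x_0\in D$, a $C^k_{\bbG}$ function agreeing with $f$ on a set having density one at $x_0$, then transfer the Taylor expansion of $u_m$ (via Theorem \ref{Teo_Taylor}) to an approximate expansion of $f$ at $x_0$. The paper simply asserts the existence of such a $u$ for a.e.~$x_0$ and then splits the approximate limit by the triangle inequality, whereas you spell out the exhaustion by the nested sets $F_m$ and argue via explicit density-zero containments; this is the same argument with more bookkeeping made visible.
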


\begin{proof}
Suppose $f$ has the Lusin property of order $k$ on $D$. Then for almost every $x\in D$, there exists $u\in C^k_{\mathbb{G}}(\mathbb{G})$ such that $\{z\in D: f(z)=u(z)\}$ contains $x$ and has density one at $x$. We claim for that such a point $x$
\begin{equation*}
	\aplim_{y\to x}\frac{|f(y)-P_k(u,x,y)|}{d(x,y)^k}=0.
\end{equation*}
To see this, first notice
\begin{align*}
	\aplim_{y\to x}\frac{|f(y)-P_k(u,x,y)|}{d(x,y)^k}\leq& \aplim_{y\to x}\frac{|f(y)-u(y)|}{d(x,y)^k}+\aplim_{y\to x}\frac{|u(y)-P_k(u,x,y)|}{d(x,y)^k}.
\end{align*}
Since $x\in \{z\in D: f(z)=u(z)\}$ is a point of density one,
\begin{equation*}
	\aplim_{y\to x}\frac{|f(y)-u(y)|}{d(x,y)^k}=0.
\end{equation*}
On the other hand, Theorem \ref{Teo_Taylor} implies
\begin{equation*}
	\aplim_{y\to x}\frac{|u(y)-P_k(u,x,y)|}{d(x,y)^k}=0.
\end{equation*}
\end{proof}

Taken together, Lemma \ref{mainthma1} and Lemma \ref{mainthma2} prove Theorem \ref{mainthma}.

\section{Lusin Approximation by $\mathrm{Lip}(k, \bbG)$ Functions}\label{Lusintaylor}

\begin{theorem*}(Restatement of Theorem \ref{mainthmb})
Let $D$ be a measurable subset of $\bbG$ with $\mathcal{L}^{N}(D)<\infty$. Let $f\colon D\to \B{R}$ be measurable. Then the following are equivalent for every positive integer $k$:
\begin{enumerate}
\item $f$ has an approximate $(k-1)$-Taylor polynomial at almost every point of~$D$.
\item $f$ admits a Lusin approximation on $D$ by functions in $\mathrm{Lip}(k, \bbG)$.
\end{enumerate} 
\end{theorem*}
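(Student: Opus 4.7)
The plan is to prove each implication separately, following the pattern of Lemma \ref{mainthma1} and Lemma \ref{mainthma2}, but invoking the Lipschitz Whitney extension (Theorem \ref{extension}) in place of the classical one. For the direction $(2) \Rightarrow (1)$, for each $n$ I would choose $u_n \in \mathrm{Lip}(k, \bbG)$, with constant $M_n$ as in Definition \ref{lipG}, such that $\mathcal{L}^N\{f \neq u_n\} < 2^{-n}$. By Borel--Cantelli, almost every $x \in D$ lies in $A_n := \{u_n = f\} \cap D$ for all sufficiently large $n$; at any density point $x_0$ of such an $A_n$, the $J = 0$ case of \eqref{defLipG} gives $|u_n(y) - P_{k-1}(u_n, x_0, y)| \leq M_n d(x_0, y)^k$ for all $y \in \bbG$, and restricting to $y \in A_n$ (where $u_n = f$) yields an approximate $(k-1)$-Taylor polynomial for $f$ at $x_0$.

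For the direction $(1) \Rightarrow (2)$, let
\[
p(x, y) = \sum_{|J|_{\bbG} \leq k-1} \alpha_J(x) \frac{(x^{-1} y)^J}{J!}
\]
denote the (uniquely determined, by Proposition \ref{structure_of_p}(1)) approximate $(k-1)$-Taylor polynomial at almost every $x \in D$, and set $f_J(x) := X^J p(x, y)|_{y = x}$ for $|J|_{\bbG} \leq k-1$. The triangular system \eqref{def_a_J} between $\{f_J\}$ and $\{\alpha_J\}$ combined with Corollary \ref{taylormeas} shows each $f_J$ is measurable on $D$, and approximate continuity of $f$ together with the approximate Taylor condition forces $f_0 = f$ almost everywhere. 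Fix $\varepsilon > 0$. Mimicking the set decomposition of Lemma \ref{mainthma1}, but with $\delta$ replaced by a fixed large constant $M$, define
\[
W_M(x, r) := B(x, r) \setminus \{y \in D : |f(y) - p(x, y)| \leq M d(x, y)^k\},
\]
\begin{align*}
A_{j, M} &:= \{x \in D : \mathcal{L}^N(W_M(x, r)) \leq V r^Q / 2^{Q+2} \text{ for all } r \leq 1/j\}, \\
B_j &:= \{x \in D : |f_J(x)| \leq j \text{ for all } |J|_{\bbG} \leq k-1\}.
\end{align*}
By the same argument as in Lemma \ref{mainthma1} these sets are measurable. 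Since \eqref{approxTaylor} gives a finite $\aplimsup$ at almost every $x$, the union $\bigcup_{j, M} (A_{j, M} \cap B_j)$ has full measure in $D$; using $\mathcal{L}^N(D) < \infty$ I would pick $j, M, R$ large and then choose a closed $F \subset A_{j, M} \cap B_j \cap \overline{B(0, R)}$ with $\mathcal{L}^N(D \setminus F) < \varepsilon$.

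For $x, y \in F$ with $r := d(x, y) \leq 1/j$, Lemma \ref{lemma_R} and the definition of $A_{j, M}$ give $\mathcal{L}^N(S) \geq V r^Q / 2^{Q+1}$ where $S := [B(x, r) \cap B(y, r)] \setminus [W_M(x, r) \cup W_M(y, r)]$. On $S$ the polynomial $q(z) := p(y, z) - p(x, z)$ (of homogeneous degree $\leq k-1$) satisfies $|q(z)| \leq 2 M r^k$; since $P_{k-1}(\{f_J\}, x, y) = p(x, y)$ by construction via \eqref{def_a_J}, one checks that $X^I q(y) = f_I(y) - X^I p(x, y) = R_I(x, y)$, so Lemma \ref{degiorgi_carnot} yields $|R_I(x, y)| \leq C M d(x, y)^{k - |I|_{\bbG}}$ for all $|I|_{\bbG} \leq k-1$. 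For $x, y \in F$ with $d(x, y) > 1/j$, boundedness of $F \subset \overline{B(0, R)}$ and $|f_J| \leq j$ on $F$ make both $|R_I(x, y)|$ and $d(x, y)^{k - |I|_{\bbG}}$ comparable to positive constants, giving a uniform estimate there as well. Hence $\{f_J\}_{|J|_{\bbG} \leq k-1} \in \mathrm{Lip}(k, F)$, and Theorem \ref{extension} produces $u \in \mathrm{Lip}(k, \bbG)$ with $X^J u = f_J$ on $F$; in particular $u = f_0 = f$ on $F$ outside a null set, yielding the required Lusin approximation. The main obstacle is the passage from pointwise $o$-type convergence (as in Lemma \ref{mainthma1}) to the uniform $O$-type bound demanded by $\mathrm{Lip}(k, \bbG)$: this forces $M$ to be fixed in advance across a large closed subset of $D$, which both necessitates the hypothesis $\mathcal{L}^N(D) < \infty$ and requires a separate argument for pairs $x, y$ at distance exceeding $1/j$ not present in Lemma \ref{mainthma1}.
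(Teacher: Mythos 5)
Your proposal is correct and follows essentially the same route as the paper's Lemma \ref{mainthmb1} and Lemma \ref{mainthmb2}: the $(1)\Rightarrow(2)$ direction uses the same set decomposition, the Carnot De~Giorgi lemma (Lemma \ref{degiorgi_carnot}), and the Lipschitz Whitney extension (Theorem \ref{extension}), with the separate treatment of pairs $x,y$ at distance exceeding the threshold handled the same way; the $(2)\Rightarrow(1)$ direction applies the $J=0$ case of the $\mathrm{Lip}(k,\bbG)$ definition at a density point, just as the paper does. The only cosmetic differences are that you parameterize the Lipschitz constant and the radius threshold by separate indices $M$ and $j$ (the paper merges them into one index), you package the $(2)\Rightarrow(1)$ exhaustion via Borel--Cantelli rather than the paper's direct $\varepsilon$-argument, and you make explicit the remark (implicit in the paper) that $f_0 = f$ a.e.\ by approximate continuity, which is needed to conclude $u = f$ on $F$.
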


We prove each implication of the equivalence in Theorem \ref{mainthmb} separately, in Lemma \ref{mainthmb1} and Lemma \ref{mainthmb2}. The proof of Lemma \ref{mainthmb1} is similar to that of Lemma \ref{mainthma1} with minor differences, since the weaker hypothesis leads naturally to the weaker conclusion.

\begin{lemma}\label{mainthmb1}
Let $D$ be a measurable subset of $\bbG$ with $\mathcal{L}^{N}(D)<\infty$. Let $f\colon D\to\B{R}$ be measurable. Let $k$ be a positive integer. Suppose $f$ has an approximate $(k-1)$-Taylor polynomial $p(x_0,x)$ at almost every point $x_0\in D$. Then $f$ admits a Lusin approximation on $D$ by functions in $\mathrm{Lip}(k, \bbG)$
\end{lemma}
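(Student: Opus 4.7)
My plan is to follow the blueprint of Lemma \ref{mainthma1}, substituting the Lipschitz Whitney extension (Theorem \ref{extension}) for the classical one. The weaker hypothesis (finite $\aplimsup$ in place of vanishing $\aplim$) matches the weaker conclusion, but forces us to produce \emph{uniform} constants on the Lusin set rather than the $o(\cdot)$-type estimates required by Theorem \ref{genclassicalWhitney}.

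First I would discard a null set so that $f$ has an approximate $(k-1)$-Taylor polynomial $p(x,y)=\sum_{|J|_{\bbG}\leq k-1}\alpha_{J}(x)(x^{-1}y)^{J}/J!$ at every $x\in D$. Corollary \ref{taylormeas} gives measurability of the coefficients $\alpha_{J}$; linearly combining them via \eqref{def_a_J} produces measurable functions $f_{J}\colon D\to\bbR$ with $f_{J}(x)=X^{J}p(x,y)|_{y=x}$. Since $\mathcal{L}^{N}(D)<\infty$, I may intersect $D$ with a large closed ball, losing arbitrarily little measure, so that $D$ becomes bounded. The crux is a two-parameter stratification. For $m\in\bbN$ and $r>0$ put
\[W_{m}(x,r):=B(x,r)\cap\{y\in D:|f(y)-p(x,y)|>m\,d(x,y)^{k}\},\]
and set
\[A_{m,j}:=\{x\in D:\mathcal{L}^{N}(W_{m}(x,r))\leq Vr^{Q}/2^{Q+2}\text{ for all }r\leq 1/j\},\]
\[B_{j}:=\{x\in D:|f_{J}(x)|\leq j\text{ for all }|J|_{\bbG}\leq k-1\},\qquad C_{n}:=A_{n,n}\cap B_{n}.\]
Measurability of $A_{m,j}$ follows from the Fubini argument used in Lemma \ref{mainthma1}, once one notes that joint measurability of $p(x,y)$ is a consequence of measurability of the $\alpha_{J}$. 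Each $A_{m,j}$ is increasing in both indices and $B_j$ is increasing, so the $C_{n}$ are increasing; because the hypothesis supplies a finite $\aplimsup$ at a.e.\ $x\in D$ and the $f_{J}$ are a.e.\ finite, the union $\bigcup_{n}C_{n}$ exhausts $D$ modulo a null set. I would then pick $n$ with $\mathcal{L}^{N}(D\setminus C_{n})<\varepsilon/2$ and, by inner regularity, a closed subset $F\subset C_{n}$ with $\mathcal{L}^{N}(D\setminus F)<\varepsilon$ (which is compact thanks to boundedness of $D$).

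On $F$ the De Giorgi argument from Lemma \ref{mainthma1} runs verbatim. For $x,y\in F$ with $r:=d(x,y)<1/n$, the set $S:=[B(x,r)\cap B(y,r)]\setminus[W_{n}(x,r)\cup W_{n}(y,r)]$ satisfies $\mathcal{L}^{N}(S)\geq Vr^{Q}/2^{Q+1}$ by Lemma \ref{lemma_R}, and on $S$ the polynomial $q(z):=p(y,z)-p(x,z)$ of homogeneous degree at most $k-1$ satisfies $|q|\leq 2nr^{k}$. Applying Lemma \ref{degiorgi_carnot} to $q$ centered at $y$ yields
\[|f_{I}(y)-X^{I}p(x,y)|=|X^{I}q(y)|\leq Cn\,d(x,y)^{k-|I|_{\bbG}}\]
for every $|I|_{\bbG}\leq k-1$, with $C=C(k,Q)$. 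For $x,y\in F$ with $d(x,y)\geq 1/n$, the same bound with a possibly larger constant $M$ is automatic: $F$ is bounded, $|f_{I}|\leq n$ on $B_{n}$, the linear inversion of \eqref{def_a_J} controls $|\alpha_{J}|$ by a multiple of $n$, and Lemma \ref{dxj} then bounds $|X^{I}p(x,y)|$ in terms of a power of $d(x,y)$ on the bounded set $F$, while $d(x,y)^{k-|I|_{\bbG}}$ stays bounded away from zero. These two regimes together show $\{f_{J}\}_{|J|_{\bbG}\leq k-1}\in\mathrm{Lip}(k,F)$, so Theorem \ref{extension} produces $g\in\mathrm{Lip}(k,\bbG)$ with $X^{J}g=f_{J}$ on $F$; in particular $g=f$ on $F$, giving the desired Lusin approximation.

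The main obstacle is precisely the passage from the pointwise approximate condition to genuinely uniform estimates on a single set: one must simultaneously control the bound $m$ on the approximate limit superior, the scale $1/j$ on which the density-of-bad-set condition is effective, and the pointwise size of the coefficients $f_{J}$, and then upgrade the short-range De Giorgi estimate to a global Lipschitz estimate on $F$. The combination of Corollary \ref{taylormeas} (to ensure $p(x,y)$ is jointly measurable) with the Carnot-group De Giorgi lemma (Lemma \ref{degiorgi_carnot}) is what makes this packaging succeed, and the hypothesis $\mathcal{L}^{N}(D)<\infty$ is what allows the stratification to terminate at some finite $n$.
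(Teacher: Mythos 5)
Your blueprint is essentially the paper's, and the high-level structure (stratification, De Giorgi, two-regime Lipschitz bound, Theorem \ref{extension}) is correct, but the way you define $W_m(x,r)$ introduces a real gap in the De Giorgi step. You set
\[
W_{m}(x,r)=B(x,r)\cap\{y\in D:|f(y)-p(x,y)|>m\,d(x,y)^{k}\},
\]
which is a subset of $D$. The set $S=[B(x,r)\cap B(y,r)]\setminus[W_{n}(x,r)\cup W_{n}(y,r)]$ is therefore \emph{not} contained in $D$: every point of $B(x,r)\cap B(y,r)\setminus D$ lies in $S$ automatically. Your estimate $|q(z)|\leq 2n\,r^{k}$ comes from the triangle inequality applied to $|f(z)-p(x,z)|\leq n\,d(x,z)^{k}$ and $|f(z)-p(y,z)|\leq n\,d(y,z)^{k}$, which only makes sense for $z\in D$; on $S\setminus D$ the bound has no justification. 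You would like to replace $S$ by $S\cap D$, but nothing in your definition of $A_{m,j}$ controls $\mathcal{L}^{N}(B(x,r)\setminus D)$ at scales up to $1/j$ (density one at $x$ is an asymptotic statement, not a uniform one), so there is no lower bound on $\mathcal{L}^{N}(S\cap D)$ and Lemma \ref{degiorgi_carnot} cannot be invoked.

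The paper sidesteps this by defining
\[
W_{j}(x,r)=B(x,r)\setminus\{y\in D:|f(y)-p(x,y)|\leq j\,d(x,y)^{k}\},
\]
which contains $B(x,r)\setminus D$ as well as the bad set inside $D$. Then membership in $A_{j}$ simultaneously says that $D$ nearly fills $B(x,r)$ and that the bad set inside $D$ is small, for all $r\leq 1/j$; consequently $S$ lands inside $D$ by construction, and the De Giorgi estimate applies on the nose. The fix to your write-up is to adopt this form of $W$; the hypothesis already forces $D$ to have density one at every point where an approximate Taylor polynomial exists, so the modified $A_{m,j}$ still exhausts $D$ up to a null set. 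Everything else in your argument — the diagonalization $C_n=A_{n,n}\cap B_n$ (which collapses to the paper's single-index $A_n\cap B_n$), measurability via Corollary \ref{taylormeas} and the Fubini/Carath\'eodory argument, the long-range bound on $|f_I(y)-X^{I}p(x,y)|$ using boundedness of $F$ and of the coefficients, and the appeal to Theorem \ref{extension} — matches the paper's proof and is fine.
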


\begin{proof}
Since $\mathcal{L}^{N}(D)<\infty$, for any $\varepsilon >0$ there is $R>0$ so that $\mathcal{L}^{N}(D\setminus B(0,R))<\varepsilon$. Hence, by replacing $D$ by $D\cap B(0,R)$, it suffices to prove the lemma for bounded $D$. By replacing $D$ by a smaller set of full measure, we assume also that $f$ has an approximate $(k-1)$-Taylor polynomial $p(x,y)$ at every point of $x\in D$. For any multi-index $J$ with $|J|_{\mathbb{G}}\leq k-1$, define $f_{J}\colon D \to \mathbb{R}$ by $f_{J}(x)~=~X^J p(x,y)|_{y=x}$. It follows from Corollary \ref{taylormeas} that $f_J$ are measurable. 

For each $x\in D$, $j\in \bbN$, $r>0$, define
\[
W_j(x,r) = B(x,r)\setminus\{ y\in D : |f(y)-p(x,y)|\leq j d(x,y)^k\},
\]
\[
T_j(r) = \{ (x,y)\in D\times D : d(x,y)<r,\, |f(y)-p(x,y)|>j d(x,y)^k \}.
\]
Each $W_j(x,r)$ is a measurable subset of $\B{G}$. The map $(x,y)\to p(x,y)$ is continuous in $y$ and measurable in $x$. Hence it is a measurable function on $D\times D$ \cite[Lemma 4.15]{AB}. It follows that each set $T_j(r)$ is a measurable subset of $\B{G}\times\B{G}$.  Let $Z=\{(x,y)\in \bbG\times \bbG: d(x,y)<r \mbox{ and }y\notin D\}$, also a measurable subset of $\bbG\times \bbG$. Then $W_{j}(x,r) = \{ y\in \bbG : (x,y)\in T_{j}(r) \cup Z \}$. Since $T_{j}(r) \cup Z$ is a measurable subset of $\bbG\times \bbG$, Fubini's theorem implies that $\mathcal{L}^N(W_{j}(x,r))$ is measurable as a function of $x\in D$ for any $r>0$. Define
\begin{align*}
A_j&=\Big\{x\in D : \mathcal{L}^N(W_j(x,r)) \leq Vr^Q / 2^{Q+2} \mbox{ for all } r\leq 1/j\Big\},\\
B_j&= \Big\{x\in D : |f_{J}(x)|\leq j \text{ for all multi indices } |J|_{\B{G}}\leq k-1\Big\},\\
C_j&= A_j \cap B_j.
\end{align*}
As in the proof of Lemma \ref{mainthma1}, the sets $A_{j}, B_{j}, C_{j}$ are measurable for $j\in \bbN$.

Temporarily fix $j\in \bbN$ and $x,y\in C_j$ with $0<d(x,y)\leq 1/j$. Let $r=d(x,y)$ and
\[
S(x,y,r,j) = [B(x,r)\cap B(y,r)]\setminus [W_j(x,r)\cup W_j(y,r)].
\]
By Lemma \ref{lemma_R}, $\mathcal{L}^N(B(x,r)\cap B(y,r)) \geq Vr^{Q}/2^{Q}$. By the assumption $x,y\in C_{j}$, $\mathcal{L}^N(W_j(x,r))\leq Vr^{Q}/2^{Q+2}$ and $\mathcal{L}^N(W_j(y,r)))\leq Vr^{Q}/2^{Q+2}$. It follows that $\mathcal{L}^N(S(x,y,r,j)) \geq Vr^Q/2^{Q+1}$.  For $z\in S(x,y,r,j)$ let $q(z) = p(y,z)-p(x,z)$. We have
\begin{align*}
|q(z)| &\leq |p(y,z)-f(z)|+|f(z)-p(x,z)|\\
&\leq j(d(y,z)^k+d(x,z)^k)\\
&\leq 2jr^k.
\end{align*}
We apply Lemma \ref{degiorgi_carnot} with  $E=S(x,y,r,j)\subset B(y,r)$, $A=V/2^{Q+1}$, and $P$ replaced by $q$. This gives, for all $|I|_{\B{G}}\leq k-1$,
\begin{align}\label{estimate_X^I}
|(X^{I}q)(y)|&=|f_I(y)-X^Ip(x,y)|\\
&\leq \frac{C}{r^{Q+|I|_{\B{G}}}}\int_{S(x,y,j)} |q(z)|dz \nonumber \\
&\leq Cjr^{k-|I|_{\B{G}}}.\nonumber 
\end{align}
The constant $C$ varies in the two lines but depends only on $k$ and $Q$.

The sets $C_j$ are increasing as $j\to\infty$
and $\mathcal{L}^N(D\setminus \bigcup_j C_j)=0$ as $f$ has approximate $(k-1)$-Taylor polynomial $p(x,y)$ for almost every $x$. Since $\mathcal{L}^{N}(D)<\infty$, given $\varepsilon >0$ we can choose $j_0$ such that $\mathcal{L}^N(D\setminus C_{j_0})\leq \varepsilon / 2$ and then choose a closed subset $F\subset C_{j_0}$ such that $\mathcal{L}^N(D\setminus F)<\varepsilon$. 

For $x,y\in F$, \eqref{estimate_X^I} gives $|f_I(y)-X^Ip(x,y)|\leq Cj_0d(x,y)^{k-|I|_{\B{G}}}$ for $x,y\in F$ with $d(x,y)\leq 1/j_{0}$ and $|I|_{\bbG}\leq k-1$. For $d(x,y)\geq 1/j_{0}$ the right side is bounded below with a constant independent of $x,y$. Meanwhile the left side is bounded above independently of $x,y$. Indeed, we have $|f_{I}(x)|\leq j_{0}$ for all $x\in F$ and $|I|_{\bbG}\leq k-1$. At the same time, Proposition \ref{dxa} and \eqref{def_a_J} imply that the coefficients of $p(x,y)$ are a linear combination of $f_{I}(x)$. Hence, since $F\subset D$ and $D$ is bounded, $|X^Ip(x,y)|$ is bounded for $x,y\in F$. Consequently for all $x,y\in F$,
\begin{equation}\label{sskp}
|f_I(y)-X^Ip(x,y)|\leq Cj_0d(x,y)^{k-|I|_{\B{G}}}\ \text{ for } |I|_{\B{G}}\leq k-1.
\end{equation}
We also have $|f_I(x)|\leq j_0$ for any $x\in F$ and $|I|_{\B{G}}\leq k-1$. Combining this with \eqref{sskp}, this shows the collection $\{f_I\}_{|I|_{\B{G}}\leq k-1}$ restricted to $F$ belong to the set $ \mathrm{Lip}(k, F)$, where $k$ is replaced by $k-1$ and $\gamma$ by $k$. We then apply Theorem \ref{extension} to get a Lusin approximation of $f$ on $F$ hence on $D$, i.e. $g\in \mathrm{Lip}(k, \bbG)$ such that $X^{J}g=f_{J}$ on $F$ for $|J|_{\bbG}\leq k-1$. 
\end{proof}

\begin{lemma}\label{mainthmb2}
Suppose a measurable function $f\colon D \to \bbR$ admits Lusin approximation on $D$ by functions in $\mathrm{Lip}(k, \bbG)$ for some positive integer $k$. Then $f$ has an approximate $(k-1)$-Taylor polynomial at almost every point of~$D$.
\end{lemma}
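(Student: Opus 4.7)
The strategy mirrors the proof of Lemma~\ref{mainthma2}, exploiting the fact that the quantitative Taylor-type bound built into the definition of $\mathrm{Lip}(k,\bbG)$ matches exactly the $\aplimsup$ appearing in the definition of an approximate $(k-1)$-Taylor polynomial. Specifically, unpacking Definition~\ref{lipG} with $k$ replaced by $k-1$ and $\gamma$ by $k$, and taking $J=0$, each $u\in\mathrm{Lip}(k,\bbG)$ satisfies
\[
|u(x)-P_{k-1}(u,x_0,x)|\leq M\,d(x,x_0)^{k}\qquad\text{for all }x,x_0\in\bbG,
\]
with some constant $M=M(u)$.

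First, I would apply the Lusin hypothesis countably many times to obtain, for each $n\in\bbN$, a function $u_n\in\mathrm{Lip}(k,\bbG)$ with $\mathcal{L}^{N}(E_n)<1/n$, where $E_n:=\{x\in D:f(x)\neq u_n(x)\}$. Set $A_n:=D\setminus E_n$ and let $A_n^{*}$ be the set of density points of $A_n$. Since $(\bbG,d,\mathcal{L}^{N})$ is a doubling metric measure space, the Lebesgue differentiation theorem applies, so $\mathcal{L}^{N}(A_n\setminus A_n^{*})=0$. Consequently $B_n:=D\setminus A_n^{*}$ satisfies $\mathcal{L}^{N}(B_n)<1/n$, and $B:=\bigcap_{n}B_n$ has measure zero.

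Now I would fix an arbitrary $x_0\in D\setminus B$, which is a set of full measure in $D$. By construction there exists $n=n(x_0)$ with $x_0\in A_n^{*}$, that is, $A_n$ has density one at $x_0$. Since $\bbG\setminus A_n\supset E_n$ and $A_n\subset D$, this forces $E_n$ to have density zero at $x_0$. Define the candidate polynomial
\[
p(x_0,y):=P_{k-1}(u_n,x_0,y),
\]
which is centered at $x_0$ and of homogeneous degree at most $k-1$. For $y\in A_n$ we have $f(y)=u_n(y)$, so the $\mathrm{Lip}(k,\bbG)$ bound yields
\[
\frac{|f(y)-p(x_0,y)|}{d(x_0,y)^{k}}=\frac{|u_n(y)-P_{k-1}(u_n,x_0,y)|}{d(x_0,y)^{k}}\leq M_n,
\]
where $M_n$ is the constant associated to $u_n$. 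Hence the set $\{y\in D:|f(y)-p(x_0,y)|/d(x_0,y)^{k}>M_n\}$ is contained in $E_n$, which has density zero at $x_0$. By definition of $\aplimsup$ this gives
\[
\aplimsup_{y\to x_0}\frac{|f(y)-p(x_0,y)|}{d(x_0,y)^{k}}\leq M_n<\infty,
\]
so $p(x_0,y)$ is an approximate $(k-1)$-Taylor polynomial of $f$ at $x_0$, establishing \eqref{approxTaylor}.

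No genuine obstacle arises: the density argument is standard once Lebesgue differentiation is available in $\bbG$, and the $\mathrm{Lip}(k,\bbG)$ bound is tailor-made to produce a uniform control on the ratio over the density-one set $A_n$. The slight subtlety, compared with Lemma~\ref{mainthma2}, is that here we only get a finite $\aplimsup$ rather than a vanishing $\aplim$, which is precisely why the natural target class is $\mathrm{Lip}(k,\bbG)$ and the conclusion is an approximate $(k-1)$-Taylor polynomial rather than a $k$-approximate derivative.
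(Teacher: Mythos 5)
Your proof is correct and follows essentially the same route as the paper: both unpack the $\mathrm{Lip}(k,\bbG)$ bound with $J=0$ to obtain $|u(x)-P_{k-1}(u,x_0,x)|\leq M\,d(x,x_0)^k$ globally, and then transfer this estimate from $u$ to $f$ at density points of the agreement set. The only cosmetic difference is that you organize the Lusin approximations as a countable family with error $1/n$ and intersect the exceptional sets at once, whereas the paper argues with an arbitrary fixed $\varepsilon>0$ and lets $\varepsilon\to 0$ at the end; the underlying idea is identical.
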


\begin{proof}
Fix $\varepsilon >0$. Using our hypothesis, choose $g\in \mathrm{Lip}(k, \bbG)$ and a measurable set $A\subset D$ such that $\mathcal{L}^{N}(D\setminus A)<\varepsilon$ and $g|_{A}=f_{A}$. Applying the definition of $\mathrm{Lip}(k, \bbG)$ with a multi-index of length $0$, there exists a constant $M$ and for every $x_{0}\in \bbG$ a polynomial $P(x_{0},x)$ of degree at most $k-1$ such that
\[|g(x)-P(x_{0},x)|\leq Md(x,x_{0})^{k} \mbox{ for all }x\in \bbG.\]
This implies that $g$ has $(k-1)$-approximate Taylor polynomial $P(x_{0},x)$ at $x_{0}$, i.e.
\[ \aplimsup_{x\to x_{0}} \frac{|g(x)-P(x_{0},x)|}{d(x,x_{0})^{k}}<\infty.\]
If $x_{0}\in A$ is a density point of $A$, it then follows that
\[ \aplimsup_{x\to x_{0}} \frac{|f(x)-P(x_{0},x)|}{d(x,x_{0})^{k}}<\infty.\]
Hence $f$ has an approximate $(k-1)$-Taylor polynomial at almost every point of $A$. Since we could choose $A$ with $\mathcal{L}^N(D\setminus A)<\varepsilon$ for any fixed $\varepsilon >0$, it follows that $f$ has an approximate $(k-1)$-Taylor polynomial at almost every point of $D$.  
\end{proof}

Taken together, Lemma \ref{mainthmb1} and Lemma \ref{mainthmb2} prove Theorem \ref{mainthmb}.

\begin{lemma}\label{mainthmb3}
Suppose $f\in \mathrm{Lip}(k, \bbG)$. Then $f\in C^{k-1}_{\bbG}(\bbG)$, $X^{J}f$ is bounded for $|J|_{\bbG}\leq k-1$, and $X^{J}f$ is Lipschitz for $|J|_{\bbG}= k-1$.
\end{lemma}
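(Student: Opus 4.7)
The plan is to unpack the definition of $\mathrm{Lip}(k,\bbG)$ directly, using Lemma \ref{polyderiv} to identify the remainders $R_J$ with increments of $X^J f$ at the top level of derivatives. Recall that $\mathrm{Lip}(k,\bbG)$ is obtained from Definition \ref{lipG} by replacing ``$k$'' with $k-1$ and ``$\gamma$'' with $k$. Thus, for $f\in\mathrm{Lip}(k,\bbG)$ there is a constant $M$ so that $X^J f(x)$ exists for every $x\in\bbG$, $|X^J f(x_0)|\le M$, and $|R_J(x_0,x)|\le M d(x,x_0)^{k-|J|_{\bbG}}$ for all $|J|_{\bbG}\le k-1$ and all $x,x_0\in\bbG$, where $R_J(x_0,x)=X^J f(x)-X^J(P_{k-1}(f,x_0,x))$.

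First, boundedness of $X^J f$ for $|J|_{\bbG}\le k-1$ is immediate from the first inequality $|X^J f(x_0)|\le M$. Next, to obtain the Lipschitz property at the top level, fix $|J|_{\bbG}=k-1$. The polynomial $P_{k-1}(f,x_0,\cdot)$ has homogeneous degree at most $k-1$, so by Lemma \ref{polyderiv} the function $x\mapsto X^J(P_{k-1}(f,x_0,x))$ is either identically zero or a homogeneous polynomial of homogeneous degree $0$; in either case a constant. By the defining property of the Taylor polynomial, evaluation at $x=x_0$ gives $X^J(P_{k-1}(f,x_0,x))|_{x=x_0}=X^J f(x_0)$, so this constant equals $X^J f(x_0)$. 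Hence
\[
R_J(x_0,x)=X^J f(x)-X^J f(x_0),
\]
and the remainder bound with $k-|J|_{\bbG}=1$ yields $|X^J f(x)-X^J f(x_0)|\le M d(x,x_0)$ for all $x,x_0\in\bbG$, i.e.\ $X^J f$ is Lipschitz with respect to the CC distance.

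It remains to show $f\in C^{k-1}_{\bbG}(\bbG)$, i.e.\ that $X^J f$ is continuous for every $|J|_{\bbG}\le k-1$. Fix such $J$ and $x_0\in\bbG$. Writing
\[
X^J f(x)=X^J(P_{k-1}(f,x_0,x))+R_J(x_0,x),
\]
the first term is a polynomial in $x$, hence continuous, and takes the value $X^J f(x_0)$ at $x=x_0$. Since $k-|J|_{\bbG}\ge 1>0$, the estimate $|R_J(x_0,x)|\le M d(x,x_0)^{k-|J|_{\bbG}}$ forces $R_J(x_0,x)\to 0$ as $x\to x_0$. Therefore $X^J f(x)\to X^J f(x_0)$, proving continuity.

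The only subtle point (and the step I expect to require the most care) is the identification $X^J(P_{k-1}(f,x_0,x))\equiv X^J f(x_0)$ for $|J|_{\bbG}=k-1$, which relies on Lemma \ref{polyderiv} combined with the characterization of Taylor polynomials; once this is in hand, boundedness, the Lipschitz estimate, and $C^{k-1}_{\bbG}$ regularity all follow by reading off the two conditions in Definition \ref{lipG} line by line.
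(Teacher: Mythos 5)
Your proof is correct and essentially the same as the paper's for the boundedness and Lipschitz parts: read off the first estimate for boundedness, note that $X^J P(x_0,\cdot)$ has homogeneous degree $\le 0$ hence is constant when $|J|_{\bbG}=k-1$, identify the constant with $X^J f(x_0)$, and plug back into the remainder bound. There are two small points of divergence worth noting. First, for the identification $X^J P(x_0,x)\equiv X^J f(x_0)$, the paper simply sets $x=x_0$ in the remainder estimate to get $|X^J f(x_0)-X^J P(x_0,x_0)|\le M\cdot 0 = 0$; this is slightly more self-contained than invoking the defining property of Taylor polynomials, since the latter is stated in the paper only for $u\in C^k_{\bbG}$ and requires the (true but unstated) algebraic fact that the same identity holds for the formally defined $P_{k-1}$ of a merely pointwise-differentiable function. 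Setting $x=x_0$ works equally well for every $|J|_{\bbG}\le k-1$ and is the cleaner route. Second, you explicitly verify the $C^{k-1}_{\bbG}$ claim (continuity of $X^J f$ for $|J|_{\bbG}\le k-1$) by decomposing $X^J f = X^J P(x_0,\cdot) + R_J(x_0,\cdot)$ and using that the remainder is $O(d(x,x_0)^{k-|J|_{\bbG}})$ with positive exponent; the paper's proof states this as part of the conclusion but does not actually spell out the argument, so your version fills in a step the paper leaves implicit. Both arguments are sound.
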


\begin{proof}
Suppose $f\in \mathrm{Lip}(k, \bbG)$. This means that Definition \ref{lipG} holds with $k$ replaced by $k-1$ and $\gamma$ replaced by $k$. Hence there exists a constant $M$ and for every $x_{0}\in \bbG$ a polynomial $P(x_{0},x)$ of homogeneous degree at most $k-1$ such that for all multi-indices $|J|_{\bbG}\leq k-1$, $x, x_{0}\in \bbG$,
\[|X^{J}f(x_{0})|\leq M,\]
\begin{equation}\label{lipbound}
|(X^{J}f)(x)-X^{J}P(x_{0},x)|\leq Md(x,x_{0})^{k-|J|_{\bbG}}.
\end{equation}
Clearly this implies $X^{J}f$ is bounded for $|J|_{\bbG}\leq k-1$. Let $J$ be a multi-index with $|J|_{\bbG}=k-1$. Then $X^{J}P(x_{0},x)$ has homogeneous degree at most $0$, so is constant in $x$. Setting $x=x_{0}$ in \eqref{lipbound} shows $X^{J}P(x_{0},x_{0})=(X^{J}f)(x_{0})$. Hence $X^{J}P(x_{0},x)=(X^{J}f)(x_{0})$ for every $x\in \bbG$. Substituting this back into \eqref{lipbound} and using $|J|_{\bbG}\leq k-1$ gives for every $x,x_{0}\in \bbG$,
\[ |(X^{J}f)(x)-(X^{J}f)(x_{0}) |\leq Md(x,x_{0}). \]
This proves $X^{J}f$ is Lipschitz for every multi-index $J$ with $|J|_{\bbG}=k-1$, as required.
\end{proof}

\end{document}